\newcommand{\m}{\mathcal}
\renewcommand{\b}{\mathbb}
\apptocmd{\lim}{\limits}{}{}
\newcommand{\cl}{\textrm{cl}}
\newcommand{\im}{\textrm{im}}
\newcommand{\tp}{\textrm{tp}}
\renewcommand{\o}{\overline}
\newcommand{\LO}{\textrm{LO}}
\theoremstyle{plain}
\newtheorem{thm}{Theorem}
\newtheorem{lemma}[thm]{Lemma}
\newtheorem{cor}[thm]{Corollary}
\numberwithin{thm}{section}
\numberwithin{subcase}{case}
\theoremstyle{definition}
\newtheorem{definition}[thm]{Definition}
\newtheorem{remark}[thm]{Remark}
\def\Ind{\setbox0=\hbox{$x$}\kern\wd0\hbox to 0pt{\hss$\mid$\hss}
\lower.9\ht0\hbox to 0pt{\hss$\smile$\hss}\kern\wd0}
\def\Notind{\setbox0=\hbox{$x$}\kern\wd0\hbox to 0pt{\mathchardef
\nn=12854\hss$\nn$\kern1.4\wd0\hss}\hbox to 0pt{\hss$\mid$\hss}\lower.9\ht0
\hbox to 0pt{\hss$\smile$\hss}\kern\wd0}
\newcommand{\Mod}{\textrm{Mod}}
\newcommand{\iso}{\cong}
\newenvironment{claim}[1]{\par\noindent\underline{Claim:}\space#1}{}
\newenvironment{claimproof}[1]{\par\noindent\underline{Proof:}\space#1}{\leavevmode\unskip\penalty9999 \hbox{}\nobreak\hfill\quad\hbox{$\square$}}
\begin{document}

\bibliographystyle{plain}
 
\author{Richard Rast\footnote{The first-named author's contribution partially supported by NSF grant DMS-1308546} \\ Davender Singh Sahota\footnote{The second-named author's contribution first appeared in his PhD thesis, University of Illinois at Chicago, 2013}}
\title{The Borel Complexity of Isomorphism for O-Minimal Theories} 
\date{\today} 

\maketitle

\begin{abstract}
	Given a countable o-minimal theory $T$, we characterize the Borel complexity of isomorphism for countable models of $T$ up to two model-theoretic invariants.  If $T$ admits a nonsimple type, then it is shown to be Borel complete by embedding the isomorphism problem for linear orders into the isomorphism problem for models of $T$.  This is done by constructing models with specific linear orders in the tail of the Archimedean ladder of a suitable nonsimple type.
	
	If the theory admits no nonsimple types, then we use Mayer's characterization of isomorphism for such theories to compute invariants for countable models.  If the theory is small, then the invariant is real-valued, and therefore its isomorphism relation is smooth.  If not, the invariant corresponds to a countable set of reals, and therefore the isomorphism relation is Borel equivalent to $F_2$.
	
	Combining these two results, we conclude that $(\Mod(T),\iso)$ is either maximally complicated or maximally uncomplicated (subject to completely general model-theoretic lower bounds based on the number of types and the number of countable models).
\end{abstract}

\nocite{vanDenDriesBook}
\nocite{PillaySteinhorn}
\nocite{hjorthBook}

\section{Introduction}

In 1988, Laura Mayer proved Vaught's Conjecture for o-minimal theories in a surprising way- an o-minimal $T$ either has finitely many countable models or continuum many.  This was accomplished through a sharp dichotomy she introduced: whether or not $T$ admits a ``nonsimple type.''  If $T$ admits a nonsimple type, $T$ must have continuum-many models.  If not, the isomorphism relation can be simply characterized, and $T$ has continuum-many countable models if and only if there are infinitely many nonisolated types.

In this paper we sharpen this divide, completely characterizing where $(\Mod(T),\iso)$ lies in the Borel complexity hierarchy (here and throughout, $\Mod(T)$ will refer to the space of countable models of $T$ with universe $\omega$).  Most prominently we show that, given a nonsimple type, $T$ is \emph{Borel complete} -- as complicated as an isomorphism problem can possibly be, so that every isomorphism problem is effectively reducible to this one.  In particular, this is proved by reducing the isomorphism problem for linear orders into the one for models of $T$; by Theorem~3 in \cite{FriedmanStanleyBC}, every isomorphism problem is reducible to this one, so this is sufficient.

If there is no such type, and there are few types generally (that is, $S_1(T)$ is countable, or equivalently, $T$ is small), then $T$ is \emph{smooth}.  That is, there is a uniformly computable real-valued invariant for countable models which characterizes them up to isomorphism.  In this case, that invariant is just the set of types which are realized on the model, and whether or not the set of realizations has endpoints.  Since smooth relations form the bottom of the hierarchy, such $T$ are Borel reducible to any other equivalence problem which has enough classes to fit the classes of $T$.

In between these two cases, if there is no nonsimple type but $S_1(T)$ is uncountable, then $T$ is not smooth.  However, countable models are still characterized by what types are realized and what kinds of endpoints they have.  Therefore, $(\Mod(T),\iso)$ will be Borel equivalent with $F_2$, meaning each model is classified up to isomorphism by a uniformly computable \emph{countable set} of reals.  It turns out (see Theorem~1.3 in \cite{markerNonSmallTheories}) that this is a lower bound for all non-small theories, and therefore even in this case, $T$ is minimal among non-small theories.

Therefore the divide is as sharp as it could be- $(\Mod(T),\iso)$ is either maximal among all isomorphism problems, or minimal among all problems with which it can be reasonably compared.  Together, all of this comprises our main theorem:

\begin{thm}\label{MainTheorem}
	Let $T$ be a complete o-minimal theory in a countable language.
	\begin{enumerate}
		\item If $T$ has no nonsimple types and $S_1(T)$ is countable, then $(\Mod(T),\iso)$ is smooth.
		\item If $T$ has no nonsimple types and $S_1(T)$ is uncountable, then $(\Mod(T),\iso)$ is Borel equivalent to $(2^\omega,F_2)$.
		\item If $T$ admits a nonsimple type, then $(\Mod(T),\iso)$ is Borel complete.
	\end{enumerate}
	
\end{thm}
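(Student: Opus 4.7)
The plan is to prove the three cases via two distinct strategies: parts (1) and (2) follow from Mayer's characterization of isomorphism by extracting a natural invariant, while part (3) requires a Borel reduction from isomorphism of countable linear orders.

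For parts (1) and (2), I would begin from Mayer's theorem, which in the absence of nonsimple types classifies countable models up to isomorphism by which 1-types are realized together with a small amount of endpoint data for each. Concretely, to each $p \in S_1(T)$ I assign a tag from a finite set $F$ recording whether $p$ is omitted, realized with no endpoint in $M$, realized with only a left endpoint, and so on. This yields an invariant $I(M) \in F^{S_1(T)}$. If $S_1(T)$ is countable, $F^{S_1(T)}$ is Polish and $M \mapsto I(M)$ is a Borel complete invariant, giving smoothness. If $S_1(T)$ is uncountable, I recast $I(M)$ as the countable set $\{(p, I(M)(p)) : I(M)(p) \neq \text{``omitted''}\}$, which gives a Borel reduction to $F_2$; the matching lower bound is precisely Marker's cited theorem for non-small theories. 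The only real work is verifying that $M \mapsto I(M)$ is Borel, which reduces to showing that realization of a type and its endpoint status are Borel conditions on the atomic diagram of $M$.

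For part (3), the plan is to Borel-reduce isomorphism of countable linear orders to $(\Mod(T),\iso)$; since the former is Borel complete by Friedman--Stanley (Theorem~3 of \cite{FriedmanStanleyBC}), this suffices. Fix a nonsimple type $p$. Informally, a nonsimple type provides a definable operation whose iterates stratify the realizations of $p$ into an ordered collection of Archimedean classes; the ``tail'' of this ladder can be shaped by choosing which Archimedean classes are realized. Given a countable linear order $L$, I will construct a countable $M_L \model T$ whose non-tail structure is fixed canonically, and in which the order-type of realized Archimedean classes in the tail of $p$ is exactly $L$. The construction should proceed by taking a canonical prime-like ``base'' part and then using o-minimality to extend it in a controlled way, amalgamating realizations of $p$ with prescribed Archimedean profile.

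The main obstacle will be the verification that $L \mapsto M_L$ is a Borel reduction. Borelness of the coding and the forward implication $L \cong L' \Rightarrow M_L \cong M_{L'}$ should both follow from the construction being canonical. The hard direction is the reverse: any isomorphism $M_L \to M_{L'}$ must induce an order-isomorphism on the tail Archimedean ladder. This is a rigidity argument — one must argue, using o-minimality, that the Archimedean order-type in the tail of $p$ is invariant under automorphisms (the non-tail portion, being fixed uniformly in $L$, cannot absorb or deform the encoded $L$), and that no isomorphism can move realizations of $p$ across Archimedean classes. This rigidity, together with a careful choice of the base portion to prevent the coding from being confused with generic data, is where the bulk of the technical work will lie.
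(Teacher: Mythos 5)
Your treatment of parts (1) and (2) follows the paper's route essentially exactly: extract from Mayer's characterization a per-type tag (the paper uses the index of the order-type of $p(\m M)$ among the six possible countable dense-order isomorphism types, Lemma~\ref{classifyingIsomorphismTypesLemma}), giving a map into $F^{S_1(T)}$ when $S_1(T)$ is countable, and recasting the invariant as the countable set of pairs $(\tp_{\m M}(n),\,\text{tag})$ for the $F_2$-reduction when $S_1(T)$ is uncountable, with Marker's theorem providing the lower bound. Borelness is straightforward in both directions. No significant difference from the paper here.

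For part (3), your sketch is directionally correct — the Archimedean ladder of a nonsimple type is the right object, and reducing $(\LO,\iso)$ is the right target — but there are two substantive gaps that the paper's proof shows are the real content of this direction. First, the construction ``amalgamate realizations of $p$ with prescribed Archimedean profile and take a prime model'' is not automatically sound: when you form $\Pr(X_L)$, the closure of $X_L$ inside $p$ could introduce Archimedean classes not represented in your intended set. The paper's notion of \emph{faithfulness} is precisely the property that forbids this, and a nontrivial chunk of the argument (Lemmas on non-cuts, 2-nonsimplicity of cuts, and the production of a nonsimple non-cut from an unfaithful cut) is devoted to showing that a faithful type over $\emptyset$ always exists if there is a nonsimple \emph{nonisolated} type. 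Without this you cannot conclude the ladder of $\Pr(X_L)$ is $L$ rather than some refinement. Second, and more seriously, your plan silently assumes the ladder is an isomorphism invariant, but this is \emph{false} when the only nonsimple type over $\emptyset$ is isolated (e.g.\ ordered divisible abelian groups): there the Archimedean relation requires parameters, so $\sim_B$ depends on $B$, and the ladder is not canonical. You gesture at ``tail'' invariance but do not identify the mechanism. The paper handles this via the \emph{canonical tail} lemma (that $\sim_A$ and $\sim_B$ agree above $\cl^p(AB)$), and then — crucially — it needs the auxiliary construction of a Borel-complete class $\b T \subset \LO$ on which tail-isomorphism coincides with isomorphism, so that recovering only a tail of $L$ still yields a reduction. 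Your proposal would need both of these ideas filled in; as written it does not distinguish the isolated and nonisolated cases at all, which is where the hard work actually lies.
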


Also, although it is not a major concern of this paper, it should be noted that adding finitely many parameters cannot change the isomorphism complexity between these three cases. Even adding infinitely many parameters can cause a move from the smooth case to $F_2$, but no other moves are possible.

Finally, we end with two interesting corollaries to the main theorem.  The first states that any nontrivial o-minimal theory (in particular, any theory which defines an infinite group) is Borel complete.  The second states that any discretely o-minimal theory (or even one with an infinite discrete part) is Borel complete.  Together, these imply virtually all known Borel completeness results for concrete examples of o-minimal theories.

This question was originally explored in the second-named author's PhD thesis \cite{SahotaThesis}, where a partial form of the main result was shown and where many of the techniques of this paper were first employed.  In this thesis, the concept of faithfulness was introduced and shown to be sufficient for Borel completeness.  From there, the general case was reduced to the faithful nonsimple non-cut case by adding constants to the language, demonstrating Borel completeness after adding finitely many parameters.

The first-named author read that work and independently completed the result, building on the techniques of the second-named author's thesis.  He used a refined notion of nonsimplicity to eliminate the use of parameters at several points, as well as to solve the ``cut'' case directly.  Finally, he introduced the idea of a canonical tail to deal with the case of a nonsimple atomic interval, allowing for the complete result.  This work is released as a coauthored paper to recognize the independent, if asynchronous, contributions of both authors.

Both authors would like to particularly credit Laura Mayer, who in her solution to Vaught's Conjecture for o-minimal theories \cite{MayerVC}, defined nonsimplicity and proposed the dichotomy which inspired this work.  Further, her characterization of isomorphism in the ``no non-simple types'' case laid the foundation for almost everything in Section~\ref{NoNonsimpleTypesSection}.

\section{Background and Proof Outline}

Throughout, $T$ will refer to an o-minimal theory in a countable language.  We will \emph{not} assume the underlying order is dense.  However, as we will see in Theorem~\ref{DiscreteBC}, if the order has an infinite non-dense part, then there is a very simple solution to our main question, which shows that $T$ is Borel complete directly.  Where relevant, we will work in an $\aleph_1$-saturated ``monster'' model, from which all parameters will be drawn.  All models will be elementary substructures of this monster model.

\subsection{O-Minimality}

The theory of o-minimality was first developed in \cite{PillaySteinhorn} by Pillay and Steinhorn and in \cite{KnightPillaySteinhorn} by Knight, Pillay, and Steinhorn.  In particular, they proved that o-minimal theories have constructible models over sets, and therefore have prime, atomic models over sets which are unique up to isomorphism.  For any set $A$, refer to this model as $\Pr(A)$.  This will be sufficiently well-defined for our purposes, since we will only depend on the isomorphism type, rather than any specific embedding of it into the monster model.

They also proved a cell-decomposition theorem for definable sets and a continuity-monotonicity theorem for definable functions.  We will use the latter of these two frequently, often without explicit mention.  The following is an easy consequence of the continuity-monotonicity theorem:

\begin{remark}
	Suppose $p$ and $q$ are complete $A$-types, and $f$ is an $A$-definable function which sends some realization of $p$ to some realization of $q$.  Then for any model $\m M$ including $A$,  $f$ is a bijection from $p(\m M)$ to $q(\m M)$ which is either order-preserving or order-reversing.  We will refer to this property by saying ``$f$ is a bijection from $p$ to $q$."
\end{remark}

In \cite{MarkerOTT}, Marker identified the three different kinds of complete nonalgebraic 1-types which can arise.  A complete 1-type $p$ which has both a definable infimum $L$ and a supremum $R$ is atomic, and generated by the atomic interval $(L,R)$; note that $L$ or $R$ may be among $\pm\infty$.  If $p$ has a definable infimum \emph{or} a definable supremum, but not both, then $p$ is nonisolated and is called a ``non-cut.''  Finally, if $p$ has neither endpoint, then $p$ is nonisolated and is called a ``cut.''

Using the remark above, we can easily see that if $p$ and $q$ are complete $A$-types, and $f:p\to q$ is $A$-definable, then $p$ and $q$ are of the same kind: both atomic, both non-cuts, or both cuts.  Further, in \cite{MarkerOTT}, Marker showed that realizing a cut does not force a realization of a non-cut.  More specifically:

\begin{thm}[Marker]
	Let $M$ be a model of an o-minimal theory.  Let $p\in S_1(M)$ be a non-cut, and $q\in S_1(M)$ be a cut.  Let $a\models p$ and $b\models q$.  Then $p$ is not realized in $\Pr(Mb)$ and $q$ is not realized in $\Pr(Ma)$. 
\end{thm}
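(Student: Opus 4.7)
The plan is to exploit the rigidity of definable functions in o-minimal theories, by way of the remark immediately preceding the theorem: an $M$-definable function sending a realization of one complete type to a realization of another forces the two types to be of the same kind (atomic, cut, or non-cut). Since a cut and a non-cut differ in kind, producing any such function between realizations of $p$ and $q$ will be immediately contradictory.

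The first step is to observe that every element of $\Pr(Mb)$ can be written as $f(b)$ for some $M$-definable partial function $f$. This uses atomicity of the prime model together with definable Skolem functions in o-minimal theories (so that $\Pr(A)$ coincides with $\dcl(A)$): every element of $\Pr(Mb)$ lies in $\dcl(Mb)$, hence is of the form $f(b)$ for some formula over $M$.

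Now suppose for contradiction that $c \in \Pr(Mb)$ realizes $p$, and write $c = f(b)$. Since $p$ is a non-cut it is nonisolated, so $c \notin M$; therefore $f$ cannot be constant on any neighborhood of $b$. By the continuity-monotonicity theorem, there is an open interval $I$ containing $b$ on which $f$ is continuous and strictly monotonic, and the remark then identifies $f$ on this interval as a bijection from $q$ onto $p$. Hence $p$ and $q$ must be of the same kind, contradicting that one is a cut and the other a non-cut. The other direction, that $q$ is not realized in $\Pr(Ma)$, is entirely symmetric, exchanging the roles of $a,p$ with $b,q$ throughout.

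The main obstacle, insofar as there is one, is the representability step: making precise that every element of $\Pr(Mb)$ is of the form $f(b)$ for some $M$-definable $f$. Once that is in hand, the proof reduces to a two-line application of continuity-monotonicity together with the preceding remark.
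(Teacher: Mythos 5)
The central representability claim—that $\Pr(Mb)$ coincides with $\dcl(Mb)$ because ``o-minimal theories have definable Skolem functions''—is false. O-minimal theories are guaranteed to have \emph{prime models over sets} (Pillay--Steinhorn), but this is strictly weaker than having definable Skolem functions. The standard counterexample is $T = \text{DLO}$: here $\dcl(A) = A$ for every set $A$, while $\Pr(A)$ is a countable dense linear order properly containing $A$. In particular, taking $a$ to realize the non-cut at $+\infty$ over a model $M$, one has $\dcl(Ma) = M\cup\{a\}$, which is not even a model, while $\Pr(Ma)$ contains infinitely many elements not of the form $f(a)$ for any $M$-definable $f$. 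So your reduction to elements of the form $f(b)$ skips over precisely the interesting part of the prime model, and the proof does not go through as written.

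The fix requires an extra step that you did not make: show that if $p$ is realized by some $c \in \Pr(Mb)$, then it is realized by an element of $\dcl(Mb)$. This does hold, and the argument uses atomicity: $\tp(c/Mb)$ is isolated, hence generated by an atomic interval $(L', R')$ with endpoints in $\dcl(Mb)\cup\{\pm\infty\}$ and with $(L', R')\cap\dcl(Mb)=\emptyset$. Since $p$ is a non-cut with (say) definable infimum $L\in M\cup\{-\infty\}$, a short case analysis shows that one of $L'$ or $R'$ lies in $\dcl(Mb)\setminus M$ and realizes $p$. Only \emph{then} can you invoke the $c=f(b)$ representation and apply continuity-monotonicity plus the remark to derive the contradiction. (The symmetric direction for $\Pr(Ma)$ needs the analogous reduction.) The core mechanism—that an $M$-definable bijection between realizations forces $p$ and $q$ to be of the same kind—is correct and is indeed the heart of Marker's argument; the gap is entirely in justifying that realizations can be taken inside $\dcl$. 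Note also that the paper does not prove this theorem itself, but cites it from Marker's omitting-types paper, so there is no internal proof to compare against.
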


Concerning closures; we evaluate all closures in the monster model, though it's equivalent to evaluating closure inside any elementary substructure containing the parameter set, and we consider all of our models to be elementarily embedded in the monster model.  Since the theory is ordered, algebraicity is equivalent to definability.  We refer to this closure operation as $\cl$.  It follows from continuity-monotonicity that $\cl$ satisfies \emph{exchange}.  That is, for any elements $a$ and $b$, and for any set $A$, if $a\in\cl(Ab)$ and $a\not\in\cl(A)$, then $b\in\cl(Aa)$.

Since closure will be used quite heavily, we introduce two parameters to it.  For any set $A$, the expression $\cl_A(X)$ means $\cl(A\cup X)$.  More unusually, if $p$ is a 1-type, possibly using parameters, then $\cl^p(X)$ is the set of all $X$-definable points which realize $p$.  These notations can and will be combined: $\cl^p_A(X)$ refers to the set of all $A\cup X$-definable elements which realize $p$.

\subsection{Nonsimplicity}

The next fundamental notion is that of \emph{nonsimplicity}, originally due to Mayer and introduced in \cite{MayerVC}:

\begin{definition}
	A type $p\in S_1(A)$ is \emph{simple} if, for every set $B$ of realizations of $p$, $\cl^p_A(B)$ is $B$.
	
	Say $p$ is \emph{nonsimple} if $p$ is not simple; that is, for some set $B$ of realizations of $p$, there is a $b\not\in B$ which realizes $p$ and which is $B$-definable.
\end{definition}

By compactness, if $p$ is nonsimple, there is a \emph{finite} set $B$ satisfying the above.  In particular, we will say $p$ is $n$-nonsimple if there is some $B$ as above with $|B|\leq n$.  We will say $p$ is $n$-simple if there is no such set $B$.  The following remark makes the \emph{minimal} nonsimplicity index very interesting:

\begin{remark}
	If $p$ is $k$-simple, then the type $p^{k+1}(x_0, \ldots, x_k)$ generated by $\{x_0<\cdots<x_k\}\cup \bigcup_{i=0}^k p(x_i)$ is complete.
\end{remark}

If $p$ is $n$-nonsimple, then there is some ascending $n$-tuple $\o a$ and some element $b$ which is $\o a$-definable but not in $\o a$.  If $n$ is \emph{minimal} such that $p$ is $n$-nonsimple, then by the remark, $p^n$ is a complete type.  Therefore, every $\o a$ has this property, so we get a definable function $f:p^n\to p$ such that $f(\o a)=b$.

While we will be very interested in particular nonsimple types, we are using the existence of a nonsimple type as a property of the theory which forms the most important dividing line for complexity.  Since the use of parameters can be a significant obstacle to descriptive set theoretic analysis, the following lemma is extremely helpful:

\begin{lemma}
	Let $A$ be a finite set, and suppose $p\in S_1(A)$ is $n$-nonsimple.  Then the restriction $p_0$ of $p$ to $S_1(\emptyset)$ is is $n+|A|$-nonsimple.
\end{lemma}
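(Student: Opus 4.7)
My plan is to reduce to a one-parameter sub-lemma and iterate. The sub-lemma is: if $p\in S_1(A\cup\{d\})$ is $n$-nonsimple, then $p|_A\in S_1(A)$ is $(n+1)$-nonsimple. Iterating $|A|$ times peels off all parameters and yields the main lemma; the base case $|A|=0$ is trivial.

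To prove the sub-lemma, fix a witness for the $n$-nonsimplicity of $p$: a set $B\subseteq p(\m M)$ of size at most $n$ together with an element $c\in\cl_{A\cup\{d\}}(B)\setminus B$ realizing $p$. Since $p$ is nonalgebraic, $c$ is distinct from every element of $A\cup\{d\}$, in particular from $d$. The easy case is when $d$ realizes $p|_A$: take $B':=B\cup\{d\}$. Every element of $B'$ realizes $p|_A$, we have $|B'|\leq n+1$, and the identity $\cl_{A\cup\{d\}}(B)=\cl_A(B\cup\{d\})=\cl_A(B')$ places $c$ in $\cl_A(B')\setminus B'$, so $B'$ witnesses $(n+1)$-nonsimplicity of $p|_A$.

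The main obstacle is when $d\not\models p|_A$, so $d$ cannot be added to the witness. The approach here is: if $c\in\cl_A(B)$, then $B$ itself already witnesses $n$-nonsimplicity of $p|_A$ and we are done. Otherwise $c\in\cl_A(B\cup\{d\})\setminus\cl_A(B)$, so exchange gives $d\in\cl_A(B\cup\{c\})$. Set $B':=B\cup\{c\}$: every element realizes $p|_A$, $|B'|\leq n+1$, and $\cl_A(B')\supseteq\cl_{A\cup\{d\}}(B)$. It then remains to produce a realization of $p|_A$ in $\cl_A(B')\setminus B'$. The natural way to do so is to invoke the $(A\cup\{d\})$-definable function $f\colon p^{n_0}\to p$ from minimal nonsimplicity (from the remark preceding the lemma) and apply it to various ascending $n_0$-subtuples of $B'$; each such application returns an element of $p$ in $\cl_A(B')$ and distinct from the chosen subtuple.

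The delicate combinatorial step, which I expect to be the main obstacle, is ruling out the degenerate case in which every application of $f$ to an $n_0$-subtuple of $B'$ happens to land back inside $B'$. To handle this I plan to iterate: either apply $f$ repeatedly to grow the set of realizations of $p$ inside $\cl_A(B')$ until one escapes $B'$, or else choose the original witness $B$ so that $|\cl_{A\cup\{d\}}(B)\cap p(\m M)|$ strictly exceeds $|B|+1$, which forces a new element beyond $B'$. The completeness of $p^{n_0}$ ensures that $f$ can be freely applied to any ascending $n_0$-subtuple drawn from the set of $p$-realizations we have accumulated, which makes the iteration go through.
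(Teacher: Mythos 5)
Your proposal is correct in spirit and takes a genuinely different route from the paper's. The paper argues by contradiction: it supposes $p_0$ is $(n+1)$-simple, uses exchange to convert the $a$-definable function $f\colon p^n\to p$ into a $\emptyset$-definable $g\colon p_0^{n+1}\to q$ (where $q=\tp(a)$), and then from the assumed simplicity manufactures a new $\emptyset$-definable function $h\colon p_0^{n+1}\to p_0$ that directly contradicts the assumption. Your proof instead works directly with witness sets: given $(B,c)$ witnessing $n$-nonsimplicity of $p$ over $A\cup\{d\}$, you peel off the parameter $d$ by either absorbing it into the witness set (when $d\models p|_A$), or swapping it for $c$ via exchange (when $d\not\models p|_A$ and $c\notin\cl_A(B)$), producing a witness of size at most $n+1$ over $A$. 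This is more combinatorial and avoids the detour through an explicit $(n+1)$-ary function. What your route costs is the final step: you must produce a realization of $p|_A$ in $\cl_A(B')\setminus B'$, and you correctly flag this as the delicate point. The cleanest way to close it is to note that $\cl^p_{A\cup\{d\}}(B)$, being nonempty (it contains $c$) and closed under an $n_0$-nonsimple function, has no endpoints and is therefore infinite --- this is precisely Lemma~\ref{bigNMakesClosureDense}, which the paper proves later in Section~\ref{nonsimpleTypeSection}; since it sits inside $\cl_A(B')$ and $B'$ is finite, some realization must escape $B'$. So your argument is completable, but if placed at the lemma's location in the paper it would need either a forward reference to that density lemma or a short local rerun of its proof (iterating $f$ in a monotone direction, as you suggest). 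The paper's contradiction argument, by contrast, is self-contained at that point in the text, which is presumably why they chose it.
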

\begin{proof}
	By an obvious inductive argument, we assume $A$ is a singleton $a$, and $p\in S_1(a)$.  Assume $n$ is minimal such that $p$ is $n$-nonsimple.  By way of contradiction, suppose that $p_0$ is $n+1$-simple.  By nonsimplicity and the remark, there is an $a$-definable function $f:p^n\to p$.  In fact, we may take $f$ to be $f(\o x;y)$ such that $f(\o x;a)$ is a nontrivial function $p^n\to p$, and by exchange over $a$, may assume that $f(\o x;a)$ is defined on ascending tuples $x_1<\cdots<x_n$ and satisfies $f(\o x;a)>x_n$ everywhere.
	
	Suppose that $\cl^{p_0}(a)$ is nonempty; that is, there is an $a'\in\cl(a)$ which realizes $p_0$.  But then by exchange, $a\in\cl(a')$, so $f(\o x;a)$ is $a'$-definable, witnessing $n+1$-nonsimplicity of $p_0$.  So it must be that $p$ is equivalent to $p_0$; therefore we take $p=p_0$ for the remainder of the proof.  Let $q(y)=\tp(a)$.
	
	But now the type $p_0^n\times q$ is complete, and $f$ is a function $p_0^n\times q\to p_0$.  By exchange and the preparation above, we may replace $f$ with a function $g:p_0^{n+1}\to q$.  Assume $g$ is of minimal arity with this property.  Then for any $b_1<\cdots<b_n$ from $p_0$, the function $g(\o b; x_{n+1})$ is a bijection from the complete $\o b$-type $p_0(x)\cup\{x>b_n\}$ to the complete $\o b$-type $q$.
	
	Therefore, define the function $h:p_0^{n+1}\to p_0$ by $h(x_1,\ldots, x_{n+1})$ to be the unique $y>x_{n+1}$ from $p_0$ where $g(x_1, \ldots, x_{n+1})=g(x_2, \ldots, x_{n+1}, y)$; such a $y$ must exist and be greater than $x_{n+1}$ by the above proof, yielding $n+1$-nonsimplicity of $p_0$, as desired.
\end{proof}

In fact, since functions require only finitely many parameters to be defined, if $T$ admits a nonsimple type over \emph{any} set, then $T$ has a nonsimple type over the empty set.  Although we will not use it, the converse is also true -- any nonsimple type over any set $A$ admits a nonsimple extension to any $B\supset A$.  Therefore:

\begin{cor}\label{parametersAreUnneeded}
	For a complete o-minimal $T$, the following are equivalent:
	\begin{itemize}
		\item $T$ admits a nonsimple type over $\emptyset$.
		\item $T$ admits a nonsimple type over $A$, for \emph{some} set $A$.
		\item $T$ admits a nonsimple type over $A$, for \emph{every} set $A$.
	\end{itemize}
	
\end{cor}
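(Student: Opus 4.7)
My plan is to prove the equivalence as (c)~$\Rightarrow$~(b)~$\Rightarrow$~(a)~$\Rightarrow$~(c). The first is immediate. For (b)~$\Rightarrow$~(a), suppose $p \in S_1(A)$ is nonsimple, witnessed by a finite set $B$ of realizations of $p$ and some $b \in \cl_A(B) \setminus B$ realizing $p$. Since $b$ is definable from $A \cup B$ using only finitely many parameters, fix a finite $A_0 \subseteq A$ with $b \in \cl_{A_0}(B)$; then the same witnesses show that $p|_{A_0}$ is nonsimple in $S_1(A_0)$, and the Lemma produces a nonsimple type in $S_1(\emptyset)$.

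The harder direction is (a)~$\Rightarrow$~(c), the converse noted but not proved in the text. Given nonsimple $p \in S_1(\emptyset)$, let $n$ be minimal such that $p$ is $n$-nonsimple. Then $p^n$ is complete by the remark preceding the Lemma, and the exchange argument from the Lemma's proof (specialized to empty parameters) yields a $\emptyset$-definable $f \colon p^n \to p$ with $f(\o x) > x_n$ on all ascending realizations of $p^n$. For an arbitrary set $A$, the plan is to build a nonsimple $q \in S_1(A)$ extending $p$ by applying compactness to the partial $A$-type
\[
\Sigma(\o x, y) \;=\; p^n(\o x) \;\cup\; \{\, y = f(\o x) \,\} \;\cup\; \{\, \phi(x_i) \leftrightarrow \phi(y) : i \le n,\ \phi \text{ an } A\text{-formula} \,\}.
\]
Any realization of $\Sigma$ produces ascending $c_1 < \cdots < c_n$ all realizing a common $q \in S_1(A)$ extending $p$, together with $d = f(\o c) \models q$ and $d \in \cl_A(\o c) \setminus \o c$, which directly witnesses the nonsimplicity of $q$.

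The main obstacle is establishing finite satisfiability of $\Sigma$. A finite fragment mentions only finitely many $A$-formulas $\phi_1, \ldots, \phi_k$, whose Boolean combinations partition the realizations of $p$ into finitely many convex subintervals with boundaries in $\cl(A)$. We may assume $p$ is non-algebraic, since an algebraic nonsimple type already yields (trivially) nonsimple algebraic types over every $A$. The realizations of $p$ then have no maximum (any such maximum would be $\emptyset$-definable, contradicting non-algebraicity), so the rightmost convex piece $Q$ of the partition is infinite. Picking ascending $c_1 < \cdots < c_n$ in $Q$, the element $d = f(\o c) > c_n$ realizes $p$ and lies above every boundary, so $d$ is again in $Q$. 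Hence $c_1, \ldots, c_n, d$ all satisfy the same Boolean combination of the $\phi_j$, as the finite fragment requires. The essential structural input making $Q$ closed under $f$ is the inequality $f(\o x) > x_n$ that comes from minimality of $n$.
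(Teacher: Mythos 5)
Your decomposition into (c)$\Rightarrow$(b)$\Rightarrow$(a)$\Rightarrow$(c) is sound, and your (b)$\Rightarrow$(a) step (restrict to a finite $A_0\subseteq A$, then apply the preceding Lemma) is exactly how the paper handles that direction. Where you depart from the paper is that the paper only \emph{proves} (b)$\Rightarrow$(a); the remaining implication needed to close the equivalence is dismissed with the remark ``Although we will not use it, the converse is also true -- any nonsimple type over any set $A$ admits a nonsimple extension to any $B\supset A$,'' with no argument supplied. You supply that missing argument via the compactness scheme $\Sigma(\o x,y)$, which is the real content of your write-up and which is correct: given a realization of $\Sigma$, the tuple $c_1<\cdots<c_n$ has a common type $q=\tp(c_1/A)\supseteq p$, and $d=f(\o c)>c_n$ realizes $q$ and lies in $\cl_A(\o c)\setminus\o c$, witnessing nonsimplicity of $q$. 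The finite-satisfiability argument also works: finitely many $A$-formulas chop $p(\mathfrak{C})$ into finitely many convex pieces, the rightmost piece $Q$ is a tail of $p(\mathfrak{C})$, and minimality of $n$ (via exchange) gives $f(\o x)>x_n$, so $Q$ is closed under $f$. So your proof is genuinely more complete than the paper's.

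One small justification should be repaired, though it does not affect the conclusion. You claim that the realizations of $p$ have no maximum because ``any such maximum would be $\emptyset$-definable.'' That is fine when $p$ is isolated by a formula $\phi$ (the maximum of $\{x:\phi(x)\}$ is definable, hence would make $p$ algebraic), but for a nonisolated $p$ the set $p(\mathfrak{C})$ is not definable, and its maximum, if it existed, need not lie in $\dcl(\emptyset)$. The clean argument that works uniformly is homogeneity of the monster: if $m$ were the maximum realization and $m'<m$ another realization, an automorphism sending $m'$ to $m$ would carry $p(\mathfrak{C})$ to itself and produce a realization $\sigma(m)>m$. Alternatively, you can avoid the issue entirely by observing that the minimality-of-$n$ argument already produced $f$ with $f(\o x)>x_n$, so as long as you can exhibit $n$ ascending realizations in the tail $Q$ at all, closure under $f$ shows $Q$ has no maximum; this circularity can be broken by noting that $Q$ is a nonempty convex subset of the $\aleph_1$-saturated $p(\mathfrak{C})$ cut out by a definable interval, hence infinite unless $p$ itself is algebraic. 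Either fix makes the proof airtight.
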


We will refer to any of the above conditions on $T$ as \emph{admitting a nonsimple type}.

\subsection{Borel Complexity}

We are interested in measuring the complexity of the isomorphism for countable models of $T$, which is a finer measurement than just counting them.  A common way to do this is through the idea of Borel reducibility -- establishing a natural way to see if one relation is ``more difficult'' than another to compute.

The notion of a Borel reduction as a way to compare complexity of classes was introduced by Friedman and Stanley in \cite{FriedmanStanleyBC}.  Consider pairs of the form $(X,E)$, where $X$ is a Borel subset of a Polish space, and $E\subseteq X^2$ is an equivalence relation.  Given two such pairs, we say $(X_1,E_1)\leq_B (X_2,E_2)$ (sometimes written as $E_1\leq_B E_2$) if there is a Borel function $f:X_1\to X_2$ where, for all $a,b\in X$, $aE_1b$ holds if and only if $f(a)E_2f(b)$.  It is clear that $\leq_B$ forms a preorder, and the requirement that $f$ be Borel provides a reasonable analogue to saying ``$E_1$ is effectively computable from $E_2$'' when (as is usual) $X_1$ and $X_2$ are uncountable.

For a countable language $L$, define $\Mod(\omega,L)$ to be the set of $L$-structures with universe $\omega$.  This is made into an uncountable Polish space using the formula topology: for any formula $\phi(\o x)$ and any tuple $\o n$ from $\omega$, the set $\{\m M:\m M\models \phi(\o n)\}$ is open.  The natural equivalence relation on this space is $L$-isomorphism.  It is well-known that any isomorphism-invariant Borel subset $X$ of $\Mod(\omega,L)$ has a corresponding $L_{\omega_1,\omega}$-sentence $\Phi$, such that $X$ is the set of models of $\Phi$ -- see, for example, Theorem 16.8 in \cite{KechrisDST}.

An invariant subset $X$ is called \emph{Borel complete} if \emph{every} invariant space $Y$ is Borel reducible to it; say a sentence $\Phi$ is Borel complete if $(\Mod(\Phi),\iso)$ is.  It is a theorem of Friedman and Stanley that there are many such sentences, including the definitions of graphs, trees, and linear orders.  This last one, whose space we denote $(\LO,\iso)$, will be most important for us. When $T$ admits a nonsimple type, we will reduce $(\LO,\iso)$ to $(\Mod(T),\iso)$ to show Borel completeness of $T$.

We are also interested in the ``less complex'' side of things.  Any $E$ which is Borel (as a subset of $X\times X$) is strictly less complex than the Borel complete relations, and the bottom of this class of Borel equivalence relations is the \emph{smooth} relations $(X,E)$ (sometimes called \emph{tame}).  $(X,E)$ is smooth if it is Borel reducible to $(Y,=)$ for some Polish space $Y$, or equivalently, there if there is a uniformly computable real invariant for elements of $X$ which precisely describes equivalence mod $E$.  Every smooth relation is Borel reducible to every non-smooth relation, and two smooth relations $(X_1,E_1)$ and $(X_2,E_2)$ satisfy $E_1\leq_B E_2$ if and only if $|X_1/E_1|\leq |X_2/E_2|$.

Our final relation of interest is $F_2$, an equivalence relation on real-valued sequences $(X)^\omega$, where $X$ is any uncountable Polish space.  Define $a F_2 b$ if the sets $\{a(0), a(1), a(2), \ldots\}$ and $\{b(0), b(1), b(2), \ldots\}$ are equal.  $F_2$ is not smooth, but it is Borel, and fairly low in the hierarchy.  It is of significant interest to us because of Theorem~1.3 in \cite{markerNonSmallTheories}, which says that if $T$ is countable but not small, then $F_2\leq_B \iso_T$.  That is, $F_2$ is a lower bound on Borel complexity for non-small theories.

\subsection{Proof Overview}

Most of the content of the paper is in proving Theorem~\ref{MainTheorem}.

First, we consider the case where $T$ has no nonsimple types.  We state and re-prove Mayer's characterization of isomorphism for such $T$.  Then we go on to prove the exact place in the Borel hierarchy for $T$ by giving explicit Borel reductions into the appropriate spaces.

The more complicated case is when $T$ has a nonsimple type.  In all cases, we will give a Borel reduction from $(\LO,\iso)$ into $(\Mod(T),\iso)$.  In essence, we will give a Borel function $\LO\to \Mod(T)$ where $L$ appears as the Archimedean ladder of some nonsimple type in $\m M_L$.  In actuality, this only works in the presence of a \emph{faithful} nonsimple type.  The notion of faithfulness applies in different ways depending on the `kind' of nonsimple type we have, so we divide into cases based on whether our nonsimple type is isolated or not.

If $p$ is a nonsimple, nonisolated type, then either $p$ is a non-cut or a cut.  We show that all non-cuts are faithful, and that every cut is either faithful or can be used to produce a nonsimple non-cut (which is necessarily faithful).  Finally, we show how to produce a Borel reduction $(\LO,\iso)$ into $(\Mod(T),\iso)$ given a faithful nonsimple type, completing the proof in this case.

The remaining case is when $p$ is isolated, where there may be no faithful types anywhere.  We exploit the idea that we can add parameters to produce a non-cut, so that we can embed a linear order as the ladder of this type.  This will not be preserved under isomorphism of models, but we show that such an embedding has a \emph{canonical tail} which is preserved.  Finally, we show that this is enough -- there is a Borel complete subclass of $\LO$ where tail isomorphism is equivalent to actual isomorphism, so we can still produce a Borel reduction $(\LO,\iso)\to(\Mod(T),\iso)$.

Therefore, given a nonsimple type, $T$ must be Borel complete.  We end with two corollaries which provide sufficient conditions for Borel completeness.  First, if the underlying order is not almost dense (that is, there are infinitely many non-dense points), we will be able to generate a faithful nonsimple type over $\emptyset$, just using the successor function.  Second, if the theory itself is nontrivial (regardless of whether this happens in a single type), we can use exchange to produce a nonsimple type over finitely many parameters.  In light of the main theorem, this shows Borel completeness for $T$ in either case.  

\section{No Nonsimple Types}\label{NoNonsimpleTypesSection}

The aim of this section is to completely characterize the complexity of $(\Mod(T),\iso)$, in the case that $T$ does not admit a nonsimple type.  Therefore, {\bf for the rest of this section, $T$ is a countable o-minimal theory with no nonsimple types.}  Our characterization will depend entirely on the size of $S_1(T)$ and the number of countable models.  To do this, consider the following definition, which is implicit in \cite{MayerVC}:

\begin{definition}
	Let $\m M$ and $\m N$ be countable models of $T$.  We say that $\m M$ and $\m N$ are \emph{apparently isomorphic} if, for every $p\in S_1(\emptyset)$, $p(\m M)\cong p(\m N)$ as linear orders.
\end{definition}

Our characterization relies on two major facts; that ``apparent'' isomorphism is equivalent to ``actual'' isomorphism, and that apparent isomorphism is a relatively simple thing to compute.

\subsection{Apparent Isomorphism is Equivalent to Isomorphism}

We begin by summarizing the part of Mayer's work which is relevant to us.  All results and definitions in this subsection are due to her and proved in \cite{MayerVC}, although the exposition is new.

\begin{lemma}\label{apparentIsomorphismLemma}
	Given countable models $\m M$ and $\m N$ of $T$, $\m M\cong\m N$ if and only if $\m M$ and $\m N$ are apparently isomorphic.
\end{lemma}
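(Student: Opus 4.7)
The forward direction is immediate: any isomorphism $F\colon \m M \to \m N$ sends the $\emptyset$-definable set $p(\cdot)$ to itself and preserves $<$, so $F$ restricted to $p(\m M)$ is an order isomorphism onto $p(\m N)$ for every $p\in S_1(\emptyset)$.

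For the converse, I would construct $F$ by a back-and-forth argument. Enumerate $\m M$ and $\m N$ and build a chain of finite partial elementary maps $f_n\colon A_n\to B_n$, alternating ``forth'' steps (adjoin the least unused element of $\m M$) with ``back'' steps (adjoin the least unused element of $\m N$); then set $F = \bigcup_n f_n$. The only nontrivial step is extension: given $a\in \m M\setminus A_n$ with $p=\tp(a/\emptyset)$, produce $b\in \m N$ so that $\tp(b/B_n)=f_n(\tp(a/A_n))$. By o-minimality, $\tp(a/A_n)$ is determined by $p$ together with the cut of $a$ in $\cl(A_n)$; and since any two distinct complete 1-types over $\emptyset$ are linearly ordered (any two are separated by an element of $\cl(\emptyset)$), the relevant cut is controlled entirely by the pair of neighbours of $a$ inside $\cl(A_n)\cap p(\m M)$. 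So the task reduces to producing a realization of $p$ in $\m N$ lying in the corresponding open interval of $\cl(B_n)\cap p(\m N)$.

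The plan is to fix order isomorphisms $\sigma_p\colon p(\m M)\to p(\m N)$ supplied by apparent isomorphism, and at each extension step to take $b=\sigma_p(a)$, maintaining the invariant that $\tilde f_n$ (the canonical extension of $f_n$ to an isomorphism $\cl(A_n)\to \cl(B_n)$) agrees with $\sigma_p$ on $\cl(A_n)\cap p(\m M)$ for every $p$. Granted this invariant, $\sigma_p(a)$ visibly lies in the correct cut of $\cl(B_n)$ because $\sigma_p$ is order-preserving and fixes the neighbours of $a$ consistently with $\tilde f_n$; the back step is handled symmetrically with the roles of $\m M$ and $\m N$ reversed.

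The main obstacle is preserving the invariant through extension, because adjoining $a$ can produce new realizations of various types $p'$ in $\cl(A_{n+1})$ of the form $g(\o c, a)$ for $\emptyset$-definable $g$ and $\o c$ a tuple from $A_n$; for the invariant to persist, we need $\sigma_{p'}(g(\o c, a)) = g(f_n(\o c), \sigma_p(a))$. Here the no-nonsimple-types hypothesis enters decisively, reinforced by Corollary~\ref{parametersAreUnneeded}: since no type over any parameter set is nonsimple, the $\emptyset$-definable functions that can generate new realizations of $p'$ from mixed tuples are severely restricted, and each slice $g(\o c, \cdot)$ is forced into a rigid bijection between type-sets whose graph is essentially determined by the order structure. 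One can then either fix the $\sigma_p$'s up front so as to satisfy these countably many commutation constraints simultaneously, using the flexibility of order isomorphisms on countable linear orders, or equivalently re-choose the relevant $\sigma_{p'}$ on a single open interval as each new constraint appears; either way, the back-and-forth terminates and $F=\bigcup_n f_n$ is the desired isomorphism.
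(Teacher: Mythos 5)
Your forward direction and the overall back-and-forth skeleton are the same as the paper's, and your reduction of the extension step to ``find a realization of $p$ in $\m N$ in the corresponding interval of $\cl(B_n)\cap p(\m N)$'' is correct. But there is a real gap at exactly the point you flag as ``the main obstacle.'' You assert that the no-nonsimple-types hypothesis makes the countably many commutation constraints $\sigma_{p'}(g(\o c,a)) = g(f_n(\o c),\sigma_p(a))$ simultaneously satisfiable, describing the constraints as ``severely restricted'' and ``essentially determined by the order structure,'' but you never identify the specific consequence of simplicity that makes this true, and without it the claim does not stand on its own. The missing ingredient is: because every 1-type is simple, for any two complete 1-types $p,q$ over the parameter set there is \emph{at most one} definable bijection $p\to q$ (if $f,g\colon p\to q$ were distinct, $g^{-1}\circ f$ would witness nonsimplicity of $p$). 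Consequently, at most one element of $\cl(a)$ realizes each type, so adjoining a single parameter $a$ splits each complete 1-type $q_0$ into \emph{at most two} convex pieces, cut at the unique $a'=f(a)\in\cl(a)$ realizing $q_0$. This is what makes the constraints satisfiable, and satisfiable in only one way.

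The paper sidesteps your global-invariant bookkeeping by not trying to carry a fixed family $\{\sigma_p\}$ at all. Instead it proves an inductive-step lemma: if apparent isomorphism holds over a finite shared parameter set $A$, then for any $a\in\m M$ there is $b\in\m N$ with $\tp(a)=\tp(b)$ and $q(\m M;a)\cong q(\m N;b)$ for every $q\in S_2(A)$. The witnessing isomorphisms for the new, smaller types are built \emph{fresh} at each step: if $q(x;a)$ is a proper piece of $q_0=q\upharpoonright A$, cut at $a'=f(a)$, then $f\circ f_p\circ f^{-1}$ (conjugating the given isomorphism on $p$ by the unique $f\colon p\to q_0$) is the required order-isomorphism of the corresponding piece in $\m N$, and by uniqueness of $f$ this is canonical. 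That explicit construction is what your second proposed fix (``re-choose $\sigma_{p'}$ on a single open interval as each new constraint appears'') amounts to, but it needs to be exhibited rather than asserted, and it depends on the uniqueness fact above. Your first proposed fix (fixing all $\sigma_p$ up front) could also be made to work by propagating a single choice of $\sigma_p$ across each equivalence class of types related by definable bijection, again using uniqueness for coherence — but as written neither route is actually carried out, and the crucial uniqueness statement, which is the entire engine of the argument, is never stated. One minor point worth flagging: by Lemma~\ref{classifyingIsomorphismTypesLemma}, each $p(\m M)$ with more than one element is dense, which is implicitly needed when you claim a realization of $p$ exists in the relevant open interval of $\cl(B_n)\cap p(\m N)$.
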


This lemma follows from a back-and-forth argument, using the following lemma as an inductive step:

\begin{lemma}[Mayer]
	Suppose $\m M$ and $\m N$ are countable models of $T$ and $A$ is a finite set of parameters in $M\cap N$ where, for all $p\in S_1(A)$, $p(\m M)\cong p(\m N)$ as linear orders.  Then, for any $a\in \m M$, there is a $b\in\m N$ such that $\tp(a)=\tp(b)$ and for all $q(x;y)\in S_2(A)$, $q(\m M;a)\cong q(\m N;b)$ as linear orders.
\end{lemma}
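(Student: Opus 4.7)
The plan is to leverage the no-nonsimple-types hypothesis to show that $\cl(Aa)$ contains at most one new realization of each non-algebraic 1-type in $S_1(A)$, and then observe that any linear-order isomorphism $\phi : p(\m M) \to p(\m N)$ yields a suitable $b$. If $a \in \cl(A)$, I would take $b := a$, which lies in $N$ since $\cl(A) \subseteq M \cap N$. Otherwise, let $p := \tp(a/A)$, which is non-algebraic; by hypothesis $p$ is simple, so $\cl^p_A(a) = \{a\}$.

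The main technical step is the claim that for every $p' \in S_1(A)$, there is at most one $A$-definable function $f : p \to p'$ up to agreement on $p$. Given two such $f_1, f_2$ (which are bijections $p \to p'$ by continuity-monotonicity), the composition $g := f_1^{-1} \circ f_2$ is an $A$-definable function $p \to p$, so for every $a \models p$, $g(a) \in \cl^p_A(a) = \{a\}$; thus $g(a) = a$ and $f_1(a) = f_2(a)$. Letting $a$ vary over $p$ yields $f_1 \equiv f_2$ on $p$. In particular, taking $p' = p$, the only $A$-definable function $p \to p$ is the identity, and for each non-algebraic $p' \neq p$, $\cl^{p'}_A(a)$ is either empty or the singleton $\{f(a)\}$ for the unique $A$-definable $f : p \to p'$.

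Equipped with this, I would fix any isomorphism $\phi : p(\m M) \to p(\m N)$ (exists by hypothesis) and set $b := \phi(a)$, giving $\tp(b/A) = p$ automatically. To verify $q(\m M; a) \cong q(\m N; b)$ for each $q \in S_2(A)$, I would classify the 1-type $r := q(x; a) \in S_1(Aa)$ by its restriction $p' := r|_A$. If $r$ is algebraic, then $r$ is forced to be $x = h(a)$ for some $A$-definable $h$, and both $q(\m M; a)$ and $q(\m N; b)$ are singletons. If $r$ is non-algebraic with $p' = p$, then $r$ is ``$x < a$'' or ``$x > a$'' within $p$, and $\phi$ itself carries $q(\m M; a)$ isomorphically onto $q(\m N; b)$. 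If $r$ is non-algebraic with $p' \neq p$ and no $A$-definable $f : p \to p'$ exists, then $r$ equals $p'$ (no new constraint), so $q(\m M; a) = p'(\m M) \cong p'(\m N) = q(\m N; b)$ by hypothesis. Finally, if $p' \neq p$ and the unique $f : p \to p'$ exists, then $f$ is an order-preserving or order-reversing bijection $p(\m M) \to p'(\m M)$ (and similarly in $\m N$) by continuity-monotonicity, so the conjugate $\psi := f \circ \phi \circ f^{-1} : p'(\m M) \to p'(\m N)$ is a linear-order isomorphism carrying $f(a)$ to $f(b)$, and restricting $\psi$ to the cuts above and below $f(a)$ yields the required isomorphisms.

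The main obstacle is establishing the uniqueness claim for $A$-definable functions; this is really the only place the no-nonsimple-types hypothesis is used. Once that is in hand, the rest is a straightforward verification that the four cases above exhaust the ways a 1-type over $Aa$ can extend a 1-type over $A$.
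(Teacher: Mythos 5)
Your proposal is correct and takes essentially the same route as the paper's proof: the key step in both is the observation that simplicity forces at most one $A$-definable function between any two $1$-types over $A$, after which one picks $b$ as the image of $a$ under a fixed order-isomorphism $p(\mathcal M)\to p(\mathcal N)$ and handles the split types by conjugating that isomorphism through the unique definable bijection. The only differences are cosmetic: the paper absorbs $A$ into the language and works over $\emptyset$, folds your case $p'=p$ into the general conjugation case (where the definable function is the identity), and leaves the algebraic subcase of $q(x;a)$ implicit, whereas you handle it explicitly.
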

\begin{proof}
	Let $\m M$ and $\m N$ be as described; clearly we may assume $A=\emptyset$ by adding it into the language.  Let $a\in\m M$ be arbitrary, and for every $p\in S_1(\emptyset)$, let $f_p:p(\m M)\to p(\m N)$ be an order isomorphism as guaranteed by hypothesis.
	
	First, note that any $\emptyset$-definable function between complete 1-types must be a continuous, strictly monotone bijection, either order-preserving or order-reversing; this follows from the continuity-monotonicity theorem and the fact that both types are complete.  Next, note that since all types are simple, there is at most one $\emptyset$-definable function between any two 1-types, since if $f,g:p\to q$ are distinct, then $g^{-1}\circ f:p\to p$ makes $p$ nonsimple.  As a consequence, for every type $p$, there is at most one element $a'\in\cl(a)$ which realizes $p$.  The same holds for any $b$ in $\m N$.
	
	With all this said, fix $p=\tp(a)$ and let $b=f_p(a)\in\m N$.  Observe that $\tp(a)=\tp(b)=p$.  We argue that this choice of $b$ works; that for any $q(x;y)\in S_2(\emptyset)$, $q(\m M;a)\cong q(\m N;b)$.  By the previous paragraph, every type over $\emptyset$ either stays the same or splits into two convex pieces.  If $q(x;a)$ is equivalent to its restriction $q_0$ to $\emptyset$, then so is $q(x;b)$, and they are already isomorphic under $f_q$.  If it does split, then there is an $a'\in\cl(a)$ which realizes $q_0$, so there is a unique $\emptyset$-definable homeomorphism $f:p\to q_0$ where $f(a)=a'$.  Observe that $f$ works in $\m N$ as well, and $q(x;b)$ splits into two pieces over $b'=f(b)$.
	
	Assume that $f:p\to q_0$ is strictly decreasing (the strictly increasing case is similar).  Then $f$ is a strictly decreasing bijection $p\cup\{x>a\}\to q_0\cup\{x<a'\}$ and $p\cup\{x<a\}\to q_0\cup\{x>a'\}$, and similarly for $b$ and $b'$ in $\m N$.  So $f\circ f_p\circ f^{-1}$ is an order-preserving bijection $q_0\cup\{x<a'\}\to q_0\cup\{x<b'\}$ and $q_0\cup\{x>a'\}\to q_0\cup\{x>b'\}$.  Since $q(x;a)$ is either $q_0\cup\{x<a'\}$ or $q_0\cup\{x>a'\}$, and $q(x;b)$ similarly, the function $f\circ f_p\circ f^{-1}$ is the desired order-isomorphism $q(\m M;a)\to q(\m N;b)$, completing the proof.
\end{proof}

It only remains to prove that ``apparent isomorphism'' is a comparatively simple notion to compute.  To that end, consider the following lemma:

\begin{lemma}\label{classifyingIsomorphismTypesLemma}[Mayer]
	For any simple $p\in S_1(\emptyset)$ and any countable $\m M\models T$, if $a,b\in p(\m M)$ and $a<b$, then there is a $c\in p(\m M)$ with $a<c<b$.
	
	Therefore, $p(\m M)$ is order-isomorphic to one of six countable linear orders.
\end{lemma}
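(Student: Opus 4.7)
The plan is to reduce the classification to Cantor's theorem on countable dense linear orders by first establishing the density statement. Once we know that $p(\m M)$ is either empty, a singleton, or densely ordered in its own ordering, the six orders follow immediately: the two trivial cases supply two of them, and Cantor's theorem classifies countable dense linear orders by the presence or absence of a minimum and a maximum, accounting for the remaining four.

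For the density claim, I would fix $a < b$ in $p(\m M)$ and produce a $c \in p(\m M)$ lying strictly between them. Two facts drive the argument: (i) the realization set of $p$ in the monster is convex, as the realization set of any complete $1$-type must be, and (ii) no realization of $p$ in the monster has an immediate successor. Granting both, there is an element of the monster strictly between $a$ and $b$; elementarity of $\m M$ in the monster reflects this existential statement into $M$, and convexity then promotes any such witness $c$ to a realization of $p$, giving $c \in p(\m M)$.

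Everything rests on (ii), which is where simplicity of $p$ enters. Suppose some realization $a$ of $p$ had an immediate successor $s(a)$ in the monster. For any $b \models p$ with $b > a$ we would have $s(a) \leq b$, so convexity would force $s(a) \models p$. Since the successor operation is $\emptyset$-definable wherever it is defined, and completeness of $p$ propagates the property ``$x$ has an immediate successor lying in $p$'' from $a$ to every realization of $p$, we would obtain a $\emptyset$-definable function $s : p \to p$ with $s(a) \neq a$ --- a direct witness of $1$-nonsimplicity of $p$, contradicting the standing assumption of the section. Ruling out immediate successors in this way is the only subtle step; the elementarity pull-back and Cantor's classification are then routine.
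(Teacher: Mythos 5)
Your proof is correct and takes essentially the same approach as the paper: show that no realization of $p$ can have an immediate successor (since the successor function would be a $\emptyset$-definable witness to $1$-nonsimplicity), then read off density and invoke Cantor's classification plus the endpoint dichotomy. The only cosmetic difference is that you detour through the monster and pull the witness back by elementarity, whereas the paper argues directly in $\m M$ (if $a<b$ in $p(\m M)$ with nothing strictly between, then $b$ is the immediate successor of $a$ \emph{in} $\m M$, and the same contradiction follows immediately); the detour is harmless but unnecessary.
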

\begin{proof}
	First, suppose $p(\m M)$ has at least two elements, $a<b$.  If $a$ has an immediate successor, then at most $b$, and by convexity of $p$, \emph{every} realization of $p$ has an immediate successor which realizes $p$.  Therefore, the successor function is a $\emptyset$-definable witness to $p$ being 1-nonsimple (against hypothesis).  Thus this cannot happen, so no element of $p(\m M)$ has an immediate successor or predecessor.
	
	Therefore, either $|p(\m M)|\leq 1$ (yielding two possible isomorphism types) or $p(\m M)$ is a dense linear order.  In this case, two choices remain with respect to endpoints, and therefore four more possible options for the isomorphism type of $p(\m M)$, for a total of six.
\end{proof}

As a consequence, two models $\m M$ and $\m N$ are apparently isomorphic if and only if, for every type $p\in S_1(\emptyset)$ which is realized in $\m M$ or $\m N$, if $p(\m M)$ has a first element (or last element, or sole element), then so does $p(\m N)$, and vice-versa.

\subsection{The Complexity of Isomorphism for Theories With No Nonsimple Types}

With both models in hand, computing apparent isomorphism is not especially difficult.  Determining precisely how difficult leads to the following characterization:

\begin{thm}\label{simpleComplexityTheorem}
	Suppose $T$ is o-minimal with no nonsimple types.
	
	\begin{itemize}
		\item If there are $c$ pairwise-independent cuts and $n$ pairwise-independent non-cuts over $\emptyset$, both finite, then $(\Mod(T), \cong)$ is Borel equivalent to $(3^n6^c,=)$.
		
		\item If there are an infinite but countable number of pairwise-independent nonisolated 1-types over $\emptyset$, then $(\Mod(T), \cong)$ is Borel equivalent to $(\b R,=)$.
		
		\item If $S_1(T)$ is uncountable, then $(\Mod(T), \cong)$ is Borel equivalent to $((2^\omega)^{\omega},F_2)$.
	\end{itemize}
	
\end{thm}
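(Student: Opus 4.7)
The plan is to define a Borel isomorphism-invariant that is complete (by Lemmas~\ref{apparentIsomorphismLemma} and~\ref{classifyingIsomorphismTypesLemma}), then analyze its codomain in each case. For each $p \in S_1(\emptyset)$, let $I_p(\m M)$ record the linear-order iso-type of $p(\m M)$, which by Lemma~\ref{classifyingIsomorphismTypesLemma} takes one of at most $6$ values (at most $3$ for non-cuts, since one endpoint is forced by the definable sup or inf). The tuple $I(\m M) := (I_p(\m M))_p$ is a complete invariant; if $p$ and $q$ are interdefinable by an $\emptyset$-definable bijection, then $I_p(\m M)$ determines $I_q(\m M)$, so only a set of pairwise-independent representatives contributes meaningful information. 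Each coordinate is Borel, since ``$p$ is realized in $\m M$'' is a $\Sigma^0_2$ condition on $\m M$, and the endpoint questions reduce to similar Borel conditions.

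For Case 1, $I$ is a Borel map $\Mod(T) \to 3^n 6^c$, giving $(\Mod(T), \iso) \leq_B (3^n 6^c, =)$. For Case 2, $I$ lands in (essentially) $6^\omega$, yielding $(\Mod(T), \iso) \leq_B (\b R, =)$. The reverse directions in both cases amount to constructing, for every possible invariant value, some countable model realizing it. I would do this by iteratively taking prime extensions over carefully chosen realization sequences: to achieve a prescribed endpoint pattern for an independent type $p_i$, realize elements of $p_i$ in the required pattern, using independence of the $p_i$ to ensure that choices for one type do not force unwanted behavior in another. In Case 2 this produces continuum-many iso classes, and since smooth Borel equivalences with continuum-many classes are all Borel equivalent to $(\b R, =)$, this suffices.

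For Case 3, the lower bound $F_2 \leq_B (\Mod(T),\iso)$ is immediate from Theorem~1.3 of~\cite{markerNonSmallTheories}, since $|S_1(T)|$ uncountable implies $T$ is not small. For the upper bound, I send $\m M$ to the sequence $(\rho_{\m M}(n))_{n \in \omega}$ where $\rho_{\m M}(n)$ encodes $\tp_{\m M}(n)$ (as a real) together with Booleans recording whether $n$ is the min or max of $\tp(n)(\m M)$. Isomorphic models give $F_2$-equivalent sequences, and conversely, equality of the resulting sets implies that both models realize precisely the same types with matching endpoint data, hence are apparently isomorphic and thus isomorphic by Lemma~\ref{apparentIsomorphismLemma}. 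Borel-ness of this map is clear.

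The main obstacle is the model-construction step underlying Cases 1 and 2: establishing that pairwise-independent types yield genuinely independent endpoint configurations in countable models, so that every target invariant is in fact attained. The no-nonsimple-types assumption is crucial here, ensuring that realizing one type cannot force unexpected realizations in another, and Mayer's work in~\cite{MayerVC} --- which underlies Lemmas~\ref{apparentIsomorphismLemma} and~\ref{classifyingIsomorphismTypesLemma} --- is the natural source of the needed tools for carrying out such constructions.
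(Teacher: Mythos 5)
Your proposal is correct and follows essentially the same route as the paper: define the invariant that assigns to each $p \in S_1(\emptyset)$ the order-type of $p(\m M)$, show this is Borel and complete (via Lemma~\ref{apparentIsomorphismLemma}), and reduce into $6^{S_1(T)}$ in the small case and into $F_2$ on $S_1(T)\times 6$ in the non-small case. Like the paper, you delegate the realization/surjectivity step for Cases 1 and 2 to Mayer's count in \cite{MayerVC} rather than re-proving it; your encoding of the endpoint data as two Booleans rather than a $6$-valued index is an inessential variant.
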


For the first and second points, we need only show that if $T$ is small, then $T$ is smooth.  In this case the Borel equivalence class of $T$ is defined exactly by the number of nonisomorphic countable models of $T$, a count which has already been done in \cite{MayerVC}, where the notion of ``pairwise-independent'' is also made precise.

\begin{lemma}
	If $S_1(T)$ is countable and has no nonsimple types, then $T$ is smooth.
\end{lemma}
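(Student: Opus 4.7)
The plan is to construct an explicit Borel reduction $f : (\Mod(T), \iso) \to (6^\omega, =)$, which suffices for smoothness since $6^\omega$ is a standard Borel space. Fix an enumeration $S_1(\emptyset) = \{p_i : i < \omega\}$, possible because $S_1(T)$ is assumed countable, and fix an enumeration $t_0, \ldots, t_5$ of the six isomorphism types of countable linear orders identified in Lemma~\ref{classifyingIsomorphismTypesLemma}. Define $f(\m M)(i) = j$ iff $p_i(\m M) \iso t_j$. This is well-defined by Lemma~\ref{classifyingIsomorphismTypesLemma}, and $\m M \iso \m N$ iff $f(\m M) = f(\m N)$ by Lemma~\ref{apparentIsomorphismLemma} (apparent isomorphism equals isomorphism).

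All that remains is to verify that $f$ is Borel, which reduces to showing each coordinate $\m M \mapsto f(\m M)(i)$ is Borel. By the six-types classification, $f(\m M)(i)$ is determined by three pieces of data: whether $p_i$ is realized in $\m M$, whether $p_i(\m M)$ has a least element, and whether it has a greatest element. For the first, the set $\{\m M : n \text{ realizes } p_i\} = \bigcap_{\phi \in p_i} \{\m M : \m M \models \phi(n)\}$ is a countable intersection of clopen sets in the formula topology, hence $G_\delta$; therefore $\{\m M : p_i(\m M) \neq \emptyset\} = \bigcup_{n \in \omega} \{\m M : n \text{ realizes } p_i\}$ is Borel. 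The condition ``$p_i(\m M)$ has a least element'' is the Borel set
\[
\bigcup_{n \in \omega} \Big( \{n \text{ realizes } p_i\} \cap \bigcap_{m \in \omega} \big(\{m \text{ does not realize } p_i\} \cup \{\m M : m \geq^{\m M} n\}\big) \Big),
\]
using that the relation $m <^{\m M} n$ is given by a clopen condition on $\m M$, and the greatest-element case is symmetric.

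Since each coordinate of $f$ is Borel and equality on $6^\omega$ is Borel, $f$ is a Borel reduction into a standard Borel space, proving smoothness of $\iso_T$. The substantive work has already been done in the preceding subsection, where apparent isomorphism was shown to coincide with isomorphism and where only six possible realized-type isomorphism classes were identified; once these are in hand there is no real obstacle. The only mild care needed is the observation that for nonisolated types the condition ``$n$ realizes $p_i$'' is only $G_\delta$ rather than clopen, but this does not disrupt the conclusion that the resulting invariant is Borel.
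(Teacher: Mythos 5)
Your proof matches the paper's approach: both reduce to $6^{S_1(T)}$ via the type-by-type order-isomorphism class, invoking Lemma~\ref{apparentIsomorphismLemma} and the six-types classification, and where the paper merely asserts the map ``is clearly Borel,'' you spell out the verification. One small slip: realized-ness together with the presence of a least and a greatest element does not by itself distinguish a singleton from a dense order with both endpoints, so a fourth Borel condition is needed (e.g., whether $p_i(\m M)$ has at least two elements), but this is immediate in the same style and does not affect the conclusion.
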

\begin{proof}
	We need a function $F:\Mod(\omega,T)\to X$, for some Polish space $X$, where $\m M_1\iso\m M_2$ iff $F(\m M_1)=F(\m M_2)$.  So let $X=6^{S_1(T)}$, which is a countable product of a finite set and is therefore Polish.  Fix an enumeration of the six possible countable dense linear orders, and for any $\m M\models T$ and $p\in S_1(T)$, let $F(\m M)(p)$ be the index of the order-type of $p(\m M)$.  This function is clearly Borel and satisfies the requirements.
\end{proof}

For the third point, it's enough to show that if $S_1(T)$ is uncountable, then $T\leq_B F_2$.  For as has already been mentioned, $F_2$ embeds into $(\Mod(T),\iso)$ for any non-small theory.

\begin{lemma}
	If $S_1(T)$ is uncountable, then $T\leq_B F_2$.
\end{lemma}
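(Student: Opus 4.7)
The plan is to give an explicit Borel reduction from $(\Mod(T),\iso)$ to $F_2$. By Lemma~\ref{apparentIsomorphismLemma}, $\m M\iso\m N$ iff for every $p\in S_1(\emptyset)$, $p(\m M)\iso p(\m N)$ as linear orders. By Lemma~\ref{classifyingIsomorphismTypesLemma}, since $T$ has no nonsimple types, each $p(\m M)$ lies in one of only six isomorphism classes of countable linear orders (the empty one, a singleton, or a countable DLO with one of four endpoint configurations). Since $\m M$ is countable, only countably many $p\in S_1(\emptyset)$ are realized in $\m M$, and for any unrealized $p$ we trivially have $p(\m M)=\emptyset=p(\m N)$ whenever $p$ is also unrealized in $\m N$. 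Thus the isomorphism type of $\m M$ is captured by the countable set of pairs $(p, c_p)$ where $p$ ranges over types realized in $\m M$ and $c_p\in\{1,\ldots,5\}$ records the order-class of $p(\m M)$ among the five nonempty options. This motivates producing a countable set of reals from $\m M$.

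Concretely, fix a Borel coding of $S_1(\emptyset)$ as elements of $2^\omega$ (through the characteristic function on an enumeration of $L$-formulas in one free variable), and fix any injection of $\{1,\ldots,5\}$ into $2^\omega$. Define $F\colon\Mod(\omega,T)\to(2^\omega)^\omega$ by letting $F(\m M)(n)$ be the concatenation of the code of $p_n:=\tp^{\m M}(n/\emptyset)$ with the code of the order-class of $p_n(\m M)$.

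Each ingredient of $F$ is Borel in $\m M$. The relation ``formula $\varphi$ lies in $\tp^{\m M}(n)$'' is clopen in $\m M$, so the code of $p_n$ is Borel-computable. The order-class of $p_n(\m M)$ is decided by finitely many questions of the form ``does some $m\in\omega$ realize $p_n$ in $\m M$'' combined with ``is $m$ extremal among such realizations under $<^{\m M}$'', each of which is a countable Boolean combination of Borel conditions, hence Borel.

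To verify $F$ is a reduction: if $\m M\iso\m N$, then for every $p$ realized in either model, $p(\m M)\iso p(\m N)$ and both are realized, so $\{F(\m M)(n):n\in\omega\}=\{F(\m N)(n):n\in\omega\}$, i.e.\ $F(\m M)\mathrel{F_2}F(\m N)$. Conversely, if the image sets coincide, then $\m M$ and $\m N$ realize exactly the same types with matching order-classes, and unrealized types on both sides contribute the same empty order; hence $\m M$ and $\m N$ are apparently isomorphic and thus isomorphic by Lemma~\ref{apparentIsomorphismLemma}. The only delicate point is the Borel verification in the previous paragraph; since all of our quantifiers range over $\omega$ and the atomic satisfaction relation on $\Mod(\omega,T)$ is Borel, there is no genuine obstacle, so I expect the proof to go through routinely once the coding is set up.
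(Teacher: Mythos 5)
Your proof is correct and takes essentially the same route as the paper: the paper also defines $F(\m M)(n)$ to record the pair $(\tp^{\m M}(n),k)$, where $k$ indexes the order-type of $\tp^{\m M}(n)(\m M)$ among the six types given by Lemma~\ref{classifyingIsomorphismTypesLemma}, and then invokes Lemma~\ref{apparentIsomorphismLemma}. The only cosmetic difference is that the paper codes into $S_1(T)\times 6$ (using uncountability of $S_1(T)$ to guarantee an uncountable target) while you interleave codes into $2^\omega$; both work since all uncountable Polish spaces are Borel isomorphic.
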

\begin{proof}
	Since $T$ is not small, $X=S_1(T)\times 6$ is an uncountable Polish space.  We will produce a Borel function $F:\Mod(\omega,T)\to X^\omega$ such that $\m M_1\iso\m M_2$ iff $\{F(\m M_1)_n:n\in\omega\}$ and $\{F(\m M_2)_n:n\in\omega\}$ are equal as sets.
	
	To that end, fix an enumeration of the six possible countable dense linear orders, and define $F(\m M)(n)$ be $(\tp_{\m M}(n),k)$, where $k$ is the index of the isomorphism type of $\tp_{\m M}(n)(\m M)$.  This function is again Borel, and two models yield the same set of sequence values if and only if they are apparently isomorphic.
\end{proof}

This proves the main theorem, as well as the following unexpected corollary, which gives another way in which these theories are dominated by their 1-types:

\begin{cor}
	Let $T$ be o-minimal with no nonsimple types.  $T$ is small if and only if $S_1(T)$ is countable.
\end{cor}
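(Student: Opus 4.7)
The forward direction is immediate from definitions: smallness of $T$ demands $S_n(\emptyset)$ be countable for every $n\in\omega$, and specializing to $n=1$ gives $S_1(T)$ countable.

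For the nontrivial converse, my plan is to chain together the two major results developed (or invoked) in the section. Assume $T$ has no nonsimple types and $S_1(T)$ is countable. Then the number of pairwise-independent nonisolated $1$-types over $\emptyset$ is either finite or countably infinite, so Theorem~\ref{simpleComplexityTheorem} places $(\Mod(T),\iso)$ into one of its first two cases. In both cases the conclusion is that $(\Mod(T),\iso)$ is Borel equivalent to equality on a Polish space, i.e.\ $\iso_T$ is \emph{smooth}.

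Now argue by contradiction: suppose $T$ is not small. Then by Marker's Theorem~1.3 of \cite{markerNonSmallTheories} one has $F_2\leq_B \iso_T$. Since $F_2$ is not smooth, and no equivalence relation Borel-above a non-smooth one can itself be smooth, this contradicts the smoothness of $\iso_T$ just established. Hence $T$ is small, which is the desired converse.

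There is essentially no obstacle here; the content of the corollary is the observation that the smoothness conclusion of the first two bullets of Theorem~\ref{simpleComplexityTheorem} is strong enough to rule out the $F_2$ lower bound which non-small theories must satisfy. What makes it striking (and worth flagging as a corollary) is that a purely combinatorial statement about $S_n(\emptyset)$ for all $n$ drops out of a descriptive set theoretic comparison that, on its face, only knows about $S_1$.
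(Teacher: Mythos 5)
Your proof is correct and follows essentially the same route as the paper's: both directions hinge on the smoothness result for countable $S_1(T)$ together with Marker's $F_2$ lower bound for non-small theories, the only difference being that you phrase the converse as a proof by contradiction while the paper states the contrapositive directly.
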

\begin{proof}
	If $T$ is small, then $S(T)$ is countable, so $S_1(T)\subseteq S(T)$ is countable.  If $T$ is not small, then $F_2\leq_B T$, so $T$ is not smooth, so $S_1(T)$ is uncountable.
\end{proof}

\section{A Nonsimple Type}\label{nonsimpleTypeSection}

Our goal for this section is to show that if $T$ is a countable o-minimal theory which admits a nonsimple type, then the isomorphism relation for $\Mod(\omega,T)$ is Borel complete.  Therefore, {\bf for the rest of this section, $T$ is a countable o-minimal theory which admits a nonsimple type.}  Since the isomorphism relation on linear orders is known to be Borel complete, our goal will be to show a Borel reduction from $(\LO,\iso)$ to $(\Mod(\omega,T),\iso)$.

Given a complete type $p$ over some set $A$, and for any set $B\supset A$, define an Archimedean equivalence relation on realizations of $p$ as follows: given $a$ and $b$ realizing $p$, say $a\sim_B b$ if there are $a_1,a_2\in \cl^p_B(a)$ and $b_1,b_2\in \cl^p_B(b)$ such that $a_1\leq b\leq a_2$ and $b_1\leq a\leq b_2$.  For our purposes, $A$ will usually be $\emptyset$.  In the quite common case that $A=B=\emptyset$, we will omit the subscript on $\sim$.

This is easily seen to be an equivalence relation.  Moreover, the equivalence classes are \emph{convex}, and thus they are totally ordered.  As a result, given any model $\m M$ of $T$ (containing $B$), the quotient $p(\m M)/\sim_B$ is a linear order, which we call the \emph{Archimedean ladder.}  If $A=B=\emptyset$, this is an invariant of the model which is preserved under isomorphism.  Assuming we can construct models with arbitrary countable ladders, and do this in a Borel fashion, we can give a Borel reduction $(\LO,\iso)\leq_B (\Mod(\omega,T),\iso)$ and show Borel completeness.

The next step toward this is the notion of faithfulness:

\begin{definition}
	A nonsimple type $p\in S_1(A)$ is \emph{faithful} if, for any set $B$ of realizations of $p$, and any $c\in\cl_A^p(B)$, $c\sim_A b$ for some $b\in B$.
\end{definition}

Approximately, ``faithfulness'' says that given some realizations of $p$, you can't access anything too fundamentally different.  In particular, you can't access any new Archimedean classes.  Since o-minimal theories have constructible\footnote{For our purposes, ``constructible'' means ``prime and mechanically producible; otherwise see for instance \cite{MarkerMT}.''} models over sets, this gives a technique: given a countable linear order $L$, pick a \emph{faithful} type $p\in S_1(\emptyset)$ and a set of $\sim$-inequivalent constants which realize $p$, and which are indexed and ordered by $L$.  The constructible model over this set of constants will have ladder exactly isomorphic to $L$, and we're done.  The details will be shown later, but there is no hidden difficulty.  The problem is finding a faithful type at all.

Our first stage is to show that if there is a \emph{nonisolated} nonsimple type over $\emptyset$, then there is a \emph{faithful} nonsimple type over $\emptyset$.  It turns out all nonsimple non-cuts are 1-nonsimple, and all 1-nonsimple non-cuts are faithful, so if there is a nonsimple non-cut, there is a faithful type.  Neither of these properties are true for cuts, but if there is an \emph{unfaithful} cut over $\emptyset$, then we can use it to produce a nonsimple non-cut over $\emptyset$.  So if $T$ admits any nonisolated nonsimple type, we can produce a faithful type and conclude that $T$ is Borel complete.

This does not completely resolve the question, however.  Consider the theory of ordered, affinized divisible abelian groups.  In this case, the only 1-type over $\emptyset$ is given by the atomic formula $x=x$, which is 1-simple but 2-nonsimple.  No such type can be faithful, so this theory admits no faithful types over $\emptyset$.  However, if we add two parameters (call them $0$ and $1$), there is a resulting non-cut ``at infinity,'' which is faithful by the work above.  We can build a ladder in this non-cut by faithfulness, but because the definition of the type relies on parameters, it will not be preserved under isomorphism.  To deal with this, we introduce the notion of a canonical tail:

\begin{definition}
	Let $p\in S_1(T)$ be an atomic nonsimple type, and let $n$ be minimal where $p$ is $n$-nonsimple.  Say $p$ \emph{has a canonical tail} if, for all sets $A$ and $B$ from $p$ of size $n$, $\sim_A$ and $\sim_B$ coincide above $\cl^p(AB)$.  That is, for all elements $c,d$ from $p$, if $c,d>\cl^p(AB)$, then $c\sim_{A}d$ if and only if $c\sim_{B}d$.
\end{definition}

The problem from before is that if we use parameters to construct a ladder, it will not be preserved under isomorphism.  However, if the atomic type has a canonical tail, then any isomorphism between suitably chosen models will preserve a \emph{tail} of the intended linear order.  With this in mind, we will first show that every nonsimple atomic type has a canonical tail.  Next, we will construct a Borel complete class of linear orders on which isomorphism and sharing a tail are the same notion, and use this to show that an atomic type with a canonical tail provides Borel completeness.

Combining these two ideas shows that if $T$ is a countable, o-minimal theory which admits a nonsimple type, then $T$ is Borel complete.

\subsection{A Nonsimple Nonisolated Type}

Our goal in this section is to show that if $T$ admits a nonsimple nonisolated type over $\emptyset$, then $T$ admits a faithful nonisolated type over $\emptyset$.  There are two distinct cases -- non-cuts and cuts.  Before we prove our needed results, we will need one lemma which is used frequently and without explicit mention.  Note that in this lemma and all that follow, we can also work with types over parameters with no change in the argument.

\begin{lemma}\label{bigNMakesClosureDense}
	If $p\in S_1(T)$ is $n$-nonsimple, then for any set $B$ of realizations of $p$ with $|B|\geq n$, $\cl^p_A(B)$ has no first or last element.  Further, if $p$ is 1-simple, then $\cl_A^p(B)$ is a dense linear order.
\end{lemma}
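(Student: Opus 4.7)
The plan is to reduce to a clean setup, extract two definable functions, dispatch the endpoint claims directly, and then turn to density, which I expect to be the genuine obstacle.

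First, I would absorb $A$ into the language (so $A = \emptyset$) and assume $n$ is minimal with $p$ being $n$-nonsimple; by the Remark preceding Corollary~\ref{parametersAreUnneeded}, $p^n$ is then complete, and as in the proof of the previous lemma there is an $\emptyset$-definable $f : p^n \to p$ with $f(x_1, \ldots, x_n) > x_n$ on ascending $n$-tuples. Applying exchange to the ascending $n$-tuple $(x_2, \ldots, x_n, f(\bar x))$ (which realizes the complete type $p^n$), I would extract a symmetric $\emptyset$-definable $g : p^n \to p$ with $g(x_1, \ldots, x_n) < x_1$. With $f$ and $g$ in hand, the rest splits into the endpoint claims and the density claim.

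For ``no last element'', fix $c \in \cl^p_A(B)$. If $c \leq \max(B)$, let $b_1 < \cdots < b_n$ be the top $n$ elements of $B$ and set $c' := f(b_1, \ldots, b_n) > b_n \geq c$. If instead $c > \max(B)$, take $c' := f(b_1, \ldots, b_{n-1}, c)$ where $b_1 < \cdots < b_{n-1}$ are the top $n-1$ elements of $B$. In either case $c' > c$ and $c' \in \cl^p_A(B)$. The ``no first element'' claim is symmetric, using $g$ in place of $f$.

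For density (assuming $p$ is 1-simple, which forces $n \geq 2$), given $c_1 < c_2$ in $\cl^p_A(B)$, I need to produce $c_3 \in (c_1, c_2) \cap \cl^p_A(B)$, and the difficulty is that $f$ and $g$ alone produce elements strictly above or strictly below any input tuple, never strictly inside one. My approach is to invoke the continuity-monotonicity theorem: for fixed $x_1, \ldots, x_{n-1}$, the function $y \mapsto f(x_1, \ldots, x_{n-1}, y)$ is continuous and monotone on $\{y > x_{n-1}\} \cap p$, so its one-sided limit $L$ as $y \to x_{n-1}^+$ exists and is $\{x_1, \ldots, x_{n-1}\}$-definable. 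Since $f > y > x_{n-1}$ everywhere, we have $L \geq x_{n-1}$; and if $L$ realizes $p$, then $(n-1)$-simplicity (which holds by minimality of $n$) forces $L \in \{x_1, \ldots, x_{n-1}\}$, hence $L = x_{n-1}$. So $f$ produces values arbitrarily close to $x_{n-1}$ from above. The genuinely delicate step, which I expect to be the main obstacle, is converting this continuous-limit fact into the existence of a $\cl^p_A(B)$-element in the specific gap $(c_1, c_2)$; I would approach it by carefully selecting the parameters $x_1, \ldots, x_{n-1}$ from $B \cup \{c_1\}$---the bound $|B| \geq n$ is precisely what makes enough choices available---and chaining the limit behavior with $f$ to force the output into the interval.
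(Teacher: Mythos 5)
Your treatment of the two endpoint claims is correct and is essentially the paper's: $f$ and $g$ (the paper's $f_n$ and $f_0$), produced from the $n$-nonsimple function via exchange, immediately give unboundedness in both directions, and your case split on whether $c$ lies above $\max(B)$ is a clean way to handle that.

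The density claim, however, has a genuine gap, and the route you sketch will not close it. Your limit observation (that $f(x_1,\ldots,x_{n-1},y)\to x_{n-1}^+$ as $y\to x_{n-1}^+$) is plausible, but turning it into an element of $\cl^p_A(B)$ inside the gap $(c_1,c_2)$ requires a $y\in\cl^p_A(B)$ lying in some small right-neighborhood of $c_1$ -- and the existence of such a $y$ is precisely what density asserts, so the argument is circular. Nothing you have in hand controls how small a $y>c_1$ can be found in $\cl^p_A(B)$; the no-last-element result only supplies $y$'s far above $c_2$.

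The missing ingredient is a third function, and it is available from the very same exchange step you already invoke. Look at the ascending $(n+1)$-tuple $a_1<\cdots<a_n<f(\o a)$, and this time extract the \emph{second} element $a_2$ rather than $a_1$: since $\dim(\o a f(\o a))=n$, exchange gives $a_2\in\cl(a_1,a_3,\ldots,a_n,f(\o a))$, and by completeness of $p^n$ this yields an $\emptyset$-definable $h:p^n\to p$ with $x_1<h(\o x)<x_2$ on ascending tuples. (This requires $n\geq 2$, which is exactly the $1$-simple hypothesis.) Now given $c_1<c_2$ in $\cl^p_A(B)$, use the already-proved no-last-element fact to pick $b_3<\cdots<b_n$ in $\cl^p_A(B)$ above $c_2$, and observe $h(c_1,c_2,b_3,\ldots,b_n)\in(c_1,c_2)\cap\cl^p_A(B)$. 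This is the paper's argument (its $f_1$ is this $h$), and it replaces your limit analysis entirely; more generally, exchange on $\o a f(\o a)$ produces a function $f_j$ landing strictly between $x_j$ and $x_{j+1}$ for each $1\leq j\leq n-1$, not just functions landing above or below the whole tuple.
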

\begin{proof}
	Let $n$ be minimal where $p$ is $n$-nonsimple, and let $\o a=a_1<\cdots<a_n$ be realizations of $p$.  By $n$-nonsimplicity, there is a $b\in\cl^p_{B}(\o a)$ which is not in $\o a$.  By exchange, any of the elements of $\o ab$ is definable over the other $n$.  By minimality of $n$, this yields functions $f_i:p^n\to p$ for $i=0,1,\ldots,n$ where $f_0(\o a)<a_1<f_1(\o a)<\cdots<a_n<f_n(\o a)$.
	
	In particular, $f_0(\o a)<a_1$ and $a_n<f_n(\o a)$, so no sufficiently large set's closure has a first or last element.  If $n\geq 2$, then we can use $f_1$ to get between $a_1$ and $a_2$, establishing density.
\end{proof}

\begin{lemma}
	If $p\in S_1(T)$ is a nonsimple non-cut, then $p$ is $1$-nonsimple.
\end{lemma}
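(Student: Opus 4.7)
The plan is to argue by contradiction. Suppose $p$ is a nonsimple non-cut that is not $1$-nonsimple, and let $n \geq 2$ be the minimal nonsimplicity index of $p$. Without loss of generality assume $p$ has $\emptyset$-definable infimum $L$ (the case of a definable supremum follows by an order-reversing argument). Following the proof of Lemma~\ref{bigNMakesClosureDense}, we obtain an $\emptyset$-definable function $f \colon p^n \to p$ with $f(a_1, \ldots, a_n) > a_n$ for every ascending tuple $a_1 < \cdots < a_n$ realizing $p$. The goal is to extract a $(n-1)$-nonsimplicity witness, contradicting the minimality of $n$.

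The key idea is to eliminate the variable $x_1$ by sliding it down to the $\emptyset$-definable endpoint $L$ and taking a monotone limit. Define
\[
g(x_2, \ldots, x_n) := \lim_{x_1 \to L^+,\, x_1 \models p} f(x_1, x_2, \ldots, x_n),
\]
which exists in the monster by the continuity-monotonicity theorem applied to $f$ in its first coordinate. Since $L \in \cl(\emptyset)$, $g$ is an $\emptyset$-definable function of $n-1$ variables. I would then argue that for ascending $a_2 < \cdots < a_n$ realizing $p$, the value $g(a_2, \ldots, a_n)$ realizes $p$ and differs from each $a_i$, witnessing $(n-1)$-nonsimplicity. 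The lower bound $g(\bar a') \geq a_n > L$ is immediate from $f > a_n$; the upper bound $g(\bar a') < c$ for each $c \in \cl(\emptyset)$ with $c > L$ follows from $f < c$ (since $f$ realizes $p$), with the direction of monotonicity determining whether the limit equals $\inf f$ or $\sup f$.

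The main obstacle I expect is the degenerate possibility that $g(\bar a')$ collapses to $a_n$ or escapes to a $\cl(\emptyset)$-element at the upper boundary of the realization-set of $p$. When $g(\bar a') = a_n$ (which can happen when $f$ is increasing in $x_1$), the plan is to switch to the dual limit $h(x_2, \ldots, x_n) := \lim_{x_1 \to x_2^-} f(x_1, \ldots, x_n)$, which by monotonicity equals $\sup f > a_n$ and remains in the realization-set of $p$. If $h$ itself escapes to the upper boundary (possible only when $p$ is unbounded above and $f$ happens to be a definable bijection between $p \cap (L, x_2)$ and an unbounded subinterval of $p$), the fallback is to replace $f$ with an interleaving function $f_j$ from the proof of Lemma~\ref{bigNMakesClosureDense} whose image is confined to the bounded interval $(a_j, a_{j+1})$; the corresponding limit is then automatically a new realization of $p$, again producing the required $(n-1)$-nonsimplicity.
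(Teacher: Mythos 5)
Your route---eliminate one variable by a monotone limit at the definable endpoint and then contradict minimality of $n$---is genuinely different from the paper's, which substitutes $n-1$ elements of $\cl(\emptyset)$ into $f$ in one step and concludes $1$-nonsimplicity directly. Your approach can be closed, but your handling of the ``escape'' degeneracy is off, and the $f_j$ fallback you propose for it is both unnecessary and, if it were needed, would not work. The key fact you are missing is that escape is \emph{impossible}, and this is precisely the non-cut property: with $L$ the definable infimum, the set $\{c\in\cl(\emptyset):c>L\}$ is nonempty (else $p$ would be the atomic interval $(L,\infty)$) and has no least element (else $p$ would be the atomic interval $(L,c_0)$). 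Hence any limit value $g$ that is $\leq c$ for every such $c$ is in fact $<c$ for every such $c$ (pass to a smaller $c'$), so $g$ realizes $p$ as soon as $g>L$. In particular a non-cut with definable infimum is never ``unbounded above'' in your sense. The only genuine degeneracy is collapse $g=a_n$ in the increasing case, which your dual limit $h=\lim_{x_1\to x_2^-}f$ correctly repairs (it is $>a_n$ whenever $f$ is nonconstant, and $<$ every $c$ by the same argument). The $f_j$ fallback would not fix escape even if it arose: with image confined to $(a_j,a_{j+1})$, \emph{both} one-sided limits in $x_1$ can collapse to the endpoints $a_j$ and $a_{j+1}$ simultaneously (e.g.\ when $f_j(\cdot,\bar a')$ is a monotone bijection from the relevant $x_1$-interval onto $(a_j,a_{j+1})$), leaving you exactly where you started. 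Minor: the restriction ``$x_1\models p$'' under your limit is not first-order, but is harmless since every $x_1$ sufficiently close to $L^+$ realizes $p$.

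The paper's argument sidesteps all of this. Taking $L$ to be the definable \emph{supremum}, it chooses $f$ with $x_n<f(\bar x)<L$ on ascending $n$-tuples from $p$, then uses overspill to get $b\in\cl(\emptyset)$ so that this holds on the whole box $(b,L)^n$. Because $\cl(\emptyset)$ is cofinal in $L$ from the left (again the non-cut property), one picks $a_1<\cdots<a_{n-1}\in\cl(\emptyset)\cap(b,L)$; these lie \emph{below} every realization of $p$, so for $x\models p$ the tuple $(a_1,\ldots,a_{n-1},x)$ is ascending inside the box, and $g(x)=f(a_1,\ldots,a_{n-1},x)$ is a $\emptyset$-definable map with $x<g(x)<L$, hence into $p$. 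That is $1$-nonsimplicity in one stroke, with no limits and no case split. The mirror image for a definable infimum would use $f_0<x_1$ and substitute elements of $\cl(\emptyset)$ lying \emph{above} $p$ into $x_2,\ldots,x_n$; your choice of the top function $f_n$ on the infimum side is what forces you into the limit argument.
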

\begin{proof}
	Let $n$ be minimal such that $p$ is $n$-nonsimple.  By exchange and minimality of $n$, construct $f(\o x):p^n\to p$ such that if $\o x=x_1<\cdots<x_n$ are all from $p$, then $x_n<f(\o x)<L$.  Then $f$ is defined and has this property on a convex set below $L$, so there is a $b\in\cl(\emptyset)$ where if $b<x_1<\cdots<x_n<L$, then $f(\o x)$ is defined and $x_n<f(\o x)<L$.
	
	But since $p$ is a non-cut, $\cl(\emptyset)$ approaches $L$ from the left, so there are elements $a_1<\cdots<a_{n-1}$ from $\cl(\emptyset)$ satisfying $b<a_1<\cdots<a_{n-1}<L$.  So the function $g(x)=f(a_1, \ldots, a_{n-1}, x)$ is a nonsimple function from $p$ to $p$, establishing $1$-nonsimplicity.
\end{proof}

\begin{lemma}\label{noncutsAreFaithful}
	If $p\in S_1(T)$ is a nonsimple non-cut, then $p$ is faithful.
\end{lemma}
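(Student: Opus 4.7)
The plan is to combine the 1-nonsimplicity from the previous lemma with the non-cut structure. By the previous lemma, fix a $\emptyset$-definable function $f \colon p \to p$ with $f(x) \neq x$; by continuity-monotonicity plus completeness of $p$, either $f$ or $f^{-1}$ satisfies $f(x) > x$ on all of $p$, so replace if necessary. The definable-supremum and definable-infimum subcases of ``non-cut'' are symmetric, so assume $p$ has definable supremum $L$, giving $x < f(x) < L$ throughout $p$.

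The warmup I would establish first is: for any single $b \models p$ and any $c \in \cl^p(b)$, we have $c \sim b$. Any such $c = g(b)$ arises from a $\emptyset$-definable $g$ whose restriction to $p$ is a monotone bijection $p \to p$, so $g^{-1}$ is also $\emptyset$-definable and $b \in \cl^p(c)$. Lemma~\ref{bigNMakesClosureDense} applied to the singletons $\{b\}$ and $\{c\}$ gives that $\cl^p(b)$ and $\cl^p(c)$ have neither a first nor a last element, so each brackets any element of the other, giving $c \sim b$.

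For the main statement, given $c \in \cl^p(B)$, I would pick a minimal tuple $\o b = (b_1 < \cdots < b_n)$ from $B$ with $c \in \cl^p(\o b)$ and induct on $n$; the base case is the warmup. For $n \geq 2$, minimality plus exchange gives $b_n \in \cl^p(b_1, \ldots, b_{n-1}, c)$, and the $(b_1, \ldots, b_{n-1})$-definable monotone bijection $h(y) = g(b_1, \ldots, b_{n-1}, y)$ satisfies $h(b_n) = c$. The strategy is to transport Archimedean information across $h$: the $f$-orbit $\{f^k(b_n)\}_{k \in \b Z} \subseteq \cl^p(b_n)$ is cofinal on both sides of $b_n$, so its $h$-image lies in $\cl^p(\o b)$ and brackets $c$ on both sides.

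The main obstacle, which I expect to be the technical heart of the argument, is promoting this transport (which a priori only witnesses $\sim_{b_1, \ldots, b_{n-1}}$, since $h$ uses parameters) to $\sim_\emptyset$. To close this gap, I would exploit the non-cut ceiling $L$ and the fact (from the proof of the previous lemma) that $\cl(\emptyset)$ approaches $L$ from below, together with the symmetric role of $b_n$ and $c$ in the exchange-derived identity $b_n \in \cl^p(b_1, \ldots, b_{n-1}, c)$ and the monotonicity of $h$, in order to pin $[c]_\sim$ to $[b_i]_\sim$ for some $i$. Intuitively, if $[c]_\sim$ were a genuinely new class over $\emptyset$, re-running the transport on $(b_1, \ldots, b_{n-1}, c)$ with target $b_n$ would force $b_n$'s class into that same new region, pushing against the fixed-class structure of the tuple $\o b$ from $B$.
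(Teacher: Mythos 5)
There is a genuine gap, and you flag it yourself: the transport via $h$ only produces elements of $\cl^p(\o b)$, so bracketing $c$ by $h$-images of the $f$-orbit says nothing about $c\sim_\emptyset b_i$ for any single $i$. The closing paragraph sketching how to ``promote'' $\sim_{b_1,\ldots,b_{n-1}}$ to $\sim_\emptyset$ is a heuristic, not an argument; I do not see how re-running the transport or invoking the ceiling $L$ removes the parameter-dependence of $h$, which is exactly the obstruction. A secondary issue: the $f$-orbit of $b_n$ need not be cofinal in $[b_n]_\emptyset$ --- for instance in $\Th(\b Q,+,<,0,1)$ with $p$ the non-cut at $+\infty$ and $f(x)=x+1$, the orbit $\{b_n+k:k\in\b Z\}$ stays below $2b_n\in\cl^p(b_n)\cap[b_n]$ --- so the cofinality picture you lean on is already shaky before the gap. (The warmup, by contrast, is correct but overbuilt: once $c\in\cl^p(b)$ and $b\in\cl^p(c)$ by exchange, $c\sim b$ is immediate from the definition of $\sim$; no density is needed.)

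The paper's proof avoids all of this by a cut/non-cut argument your approach has no analogue of. Assume unfaithfulness, take a witness $a_1<\cdots<a_n$ of minimal length with $b\in\cl^p(\o a)$ and $b\not\sim a_i$ for all $i$, arrange $b<a_1$ by exchange, and set $A=\{a_1,\ldots,a_{n-1}\}$. Minimality forces $\cl^p(A)\subseteq[a_1]\cup\cdots\cup[a_{n-1}]$, from which one reads off that $\tp(b/A)$ is a cut while $\tp(a_n/A)$ is a non-cut (supremum the definable endpoint of $p$; no infimum since $\cl^p(A)$ has no last element). But exchange gives $a_n\in\cl(Ab)\subseteq\Pr(Ab)$, so a non-cut over $A$ is realized in the prime model over $A$ together with a realization of a cut over $A$ --- exactly what Marker's omitting-types theorem forbids. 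If you want to complete a proof of this lemma, I would suggest discarding the orbit-transport plan and using the cut/non-cut dichotomy directly, since that is what makes the minimal-witness structure actually bite.
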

\begin{proof}
	Suppose $p$ is unfaithful.  We may assume $p$ has a supremum; the infimum case is symmetric.  By unfaithfulness, there is a tuple $a_1<\cdots<a_n$ of realizations of $p$ where $[a_1]<\cdots<[a_n]$ and where there is a $b\in\cl^p(\o a)$ such that $b\not\sim a_i$ for any $i=1,\ldots,n$.  We assume $n$ is minimal with this property.  Clearly $n>1$, so by exchange, we may assume $b<a_1$.  Let $A=\{a_1, \ldots, a_{n-1}\}$.
	
	Observe that $\dim(\o ab)=n$ by minimality of $n$, so that every point of $\o ab$ is definable over the other $n$.  Also, observe that $\tp(b/Aa_n)$ is a cut ($p$ is nonsimple, so the closure of a nonempty set has no first or last element), while $\tp(a_n/Ab)$ is a non-cut.  Both are nonisolated over $A$, so neither is realized in $\Pr(A)$.  Yet $\tp(a_n/A)$ is realized in $\Pr(Ab)$, violating the omitting types theorem in \cite{MarkerOTT}.
\end{proof}

This resolves the issue of non-cuts; we will now prove that unfaithful cuts can be used to produce nonsimple (and therefore faithful) non-cuts.

\begin{lemma}
	If a cut $p\in S_1(T)$ is nonsimple, then it is 2-nonsimple.
\end{lemma}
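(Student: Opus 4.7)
The plan is to argue by contradiction: assume $p$ is $n$-minimal nonsimple with $n \geq 3$, and produce a witness of $2$-nonsimplicity by restricting $p$ to one side of a single realization and applying the preceding lemma.

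First, by exchange and minimality of $n$, produce an ascending tuple $a_1 < \cdots < a_n$ in $p$ and an $\emptyset$-definable function $f \colon p^n \to p$ with $f(\bar x) > x_n$ on ascending tuples from $p$; setting $b := f(\bar a)$, we have $b \in \cl^p(\bar a) \setminus \bar a$, and via exchange we may arrange $b > a_n$. Set $a := a_1$ and consider $q := p|_{>a} = \tp(b/a)$, a complete type over $\{a\}$ (complete by $2$-simplicity of $p$).

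I would argue that $q$ is a non-cut over $\{a\}$: the infimum is $a$, so it suffices to rule out a definable supremum. Any $\{a\}$-definable candidate for the sup equals $g(a)$ for some $\emptyset$-definable $g$, and the sup of $q$ is intrinsically $R := \sup p$; by uniformity across $p$ (via completeness of $p$ over $\emptyset$), $g$ would have to take the constant value $R$ on all of $p$, forcing $R \in \cl(\emptyset)$ and contradicting the cut hypothesis. Next, $q$ is $(n-1)$-nonsimple over $\{a\}$ via witnesses $(a_2, \ldots, a_n)$ and $b$, since $\cl^p(\bar a) \cap \{x > a\} \subseteq \cl^q_a(a_2, \ldots, a_n)$. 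Applying the preceding lemma to $q$ over $\{a\}$---either by observing its proof generalizes verbatim to types over parameters, or by naming $a$ as a constant in an expanded language---yields $c \models q$ and an $\{a\}$-definable function $h$ with $h(c) \in \cl^p_a(c) \setminus \{c\}$.

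Finally, $a$ and $c$ are two realizations of $p$ and $h(c) \in \cl^p(\{a,c\}) \setminus \{a,c\}$, so $\{a,c\}$ witnesses $2$-nonsimplicity of $p$ over $\emptyset$, contradicting $n \geq 3$. Hence $n \leq 2$. The principal obstacle is the non-cut verification for $q$ over $\{a\}$: the uniformity and continuity-monotonicity argument must trace any $\{a\}$-definable candidate for the supremum back to $\emptyset$-definability of the cut $R$, which the hypothesis that $p$ is a cut precisely forbids.
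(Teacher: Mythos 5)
You take a genuinely different route from the paper. The paper works directly with the $n$-ary witness: by exchange and minimality of $n$, arrange a $\emptyset$-definable $f:p^n\to p$ with $x_1<f(\overline{x})<x_2$ on ascending tuples, observe that the domain and this inequality extend to a $\emptyset$-definable interval containing $p$, and then --- since $p$ is a cut, so $\cl(\emptyset)$ approaches $p$ from above --- substitute elements of $\cl(\emptyset)$ just above $p$ for the last $n-2$ arguments, producing a binary $\emptyset$-definable witness directly. Your reduction to the non-cut lemma over the single parameter $a$ is a clean alternative decomposition, and the bookkeeping around completeness of $q$, the $(n-1)$-witness, and the translation of $1$-nonsimplicity of $q$ over $\{a\}$ into $2$-nonsimplicity of $p$ over $\emptyset$ is all sound.

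However, the non-cut verification for $q$ has a genuine gap, and you correctly flag it as the principal obstacle. You claim that any $\{a\}$-definable supremum $g(a)$ of $q$ would force $g$ to be constant on $p$ ``by uniformity,'' hence $\sup p\in\cl(\emptyset)$. This does not follow: completeness of $p$ plus continuity-monotonicity gives only that $g$ is constant or strictly monotone on a neighborhood of $p$, and strict monotonicity is entirely consistent with each $g(a')$ serving as the supremum of $p\restriction_{>a'}$ over $\{a'\}$ --- there is no actual element ``$\sup p$'' for $g$ to take as a value, since $p$ is a cut, and the least element of $\cl(a')$ above $p$ may well vary with $a'$. The correct repair: if $g(a)$ were the minimum of $\{d\in\cl(a):d>p\}$, then $g(a)$ lies strictly below every $\emptyset$-definable element above $p$ (strictly, since $p$ is a cut) and above every $\emptyset$-definable element below $p$; hence $g(a)$ determines the same Dedekind cut over $\cl(\emptyset)$ as $p$, so $g(a)\models p$, contradicting $g(a)>$ every realization of $p$. (Equivalently: a $\emptyset$-definable bijection sends cuts to cuts, so $\tp(g(a)/\emptyset)$ is a cut distinct from $p$, and distinct cuts are separated by elements of $\cl(\emptyset)$.) With that substitution your reduction goes through.
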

\begin{proof}
	Suppose not.  That is, let $p\in S_1(T)$ be a nonsimple cut, and let $n$ be minimal such that $p$ is $n$-nonsimple, and such that $n\geq 3$.
	
	By exchange and minimality of $n$, there is a $\emptyset$-definable function $f:p^n\to p$, defined on ascending $n$-tuples from $p$, such that if $x_1<\cdots<x_n$ are realizations of $p$, then $x_1<f(\o x)<x_2<\cdots<x_n$.  Then these properties hold on a convex set, so hold on a $\emptyset$-definable open interval $I=(a,b)$ containing $p$.  Since $\cl(\emptyset)$ approaches $p$ from the right, we can choose elements $c_3<\cdots<c_n$ from $\cl(\emptyset)$ where $c_n<b$ and $p(x)$ implies $x<c_3$.
	
	Then the function $g(x_1,x_2)=f(x_1,x_2,c_3,\ldots,c_n)$ is $\emptyset$-definable and defined on the interval $(a,c_3)$.  Further, if $x_1<x_2$ realize $p$, then $x_1<g(x_1,x_2)<x_2$, so by convexity of $p$, $g(x_1,x_2)$ realizes $p$ as well, establishing 2-nonsimplicity of $p$.
\end{proof}

If our cut has no unary functions, then the binary function can be made to act like an averaging function, defined on every pair in a convex set including the type itself.  Since our type is nonisolated, the domain of this function will overspill the type, and we can use this to construct a nonsimple non-cut:

\begin{lemma}
	If $p\in S_1(T)$ is a nonsimple, 1-simple cut, then there is a faithful non-cut $q\in S_1(T)$.
\end{lemma}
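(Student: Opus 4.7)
My plan is to extract a faithful non-cut $q \in S_1(T)$ from the $\emptyset$-definable averaging function $g(x_1,x_2)$ produced by the previous 2-nonsimplicity lemma, combined with a parameter $c_3 \in \cl(\emptyset)$ lying just above $p$ (which exists because $p$, being a cut, has $\cl(\emptyset)$ approaching it from above with no minimum). The central tool is the $\emptyset$-definable unary function $h(x) := g(x,c_3)$, continuous and strictly monotone on $(a_0,c_3)$, satisfying $x < h(x) < c_3$ on a $\emptyset$-definable open neighborhood of $p$ by the betweenness of $g$ combined with continuity.

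First I would use 1-simplicity to show $h$ maps $p$ strictly above itself: for $b \in p$, $h(b) \in \cl(b)$ with $h(b) > b$, so $h(b) \notin p$ (else $\cl(b)\cap p = \{b\}$ would force $h(b) = b$), placing $h(p) \subseteq (\sup p, c_3)$ in the monster. The naive candidate $\tp(h(b)/\emptyset)$ is itself a cut by the kind-preservation remark (being $\emptyset$-definably bijective to $p$), so the non-cut must come from the overspill of $g$'s domain past $p$. By $\aleph_1$-saturation, the orbit $\{h^n(b) : n \geq 1\}$ is bounded strictly below $c_3$, converging to a fixed point of $h$ in $(\sup p, c_3)$ which realizes a cut type $\tau$ over $\emptyset$ (again a cut by kind preservation from $p$ via the $\emptyset$-definable map $b \mapsto \lim_n h^n(b)$).

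The non-cut $q$ is then the $\emptyset$-definable 1-type of elements in the monster lying strictly above every realization of $\tau$ but strictly below $c_3$. By construction $c_3 \in \cl(\emptyset)$ is the $\emptyset$-definable supremum of $q$, while the infimum is the non-definable upper boundary of $\tau$, making $q$ a non-cut. To see that $q$ is nonsimple, I would verify that $h$ restricts to a non-trivial $\emptyset$-definable self-map on $q$: for $y \in q$, monotonicity and the continuity-preservation of the ``above all iterates of $h$ on $p$'' condition yield $h(y) \in (y, c_3) \cap q$, while $h(y) \neq y$ because $h(x) > x$ on the relevant neighborhood. Then the \emph{noncutsAreFaithful} lemma delivers that $q$ is faithful.

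The main obstacle will be rigorously pinning down $q$ as a single complete 1-type over $\emptyset$ (not a union of several types), and confirming that $h$ genuinely restricts to a self-map of $q$. This requires careful bookkeeping of the $\cl(\emptyset)$-structure inside $(\sup p, c_3)$ and of the fixed-point set of $h$ there, and if necessary replacing $c_3$ by a smaller $\cl(\emptyset)$-element to prevent additional definable points from splitting the candidate region. Once these checks go through, one obtains a nonsimple non-cut over $\emptyset$, which by the previous lemma is automatically faithful, completing the proof.
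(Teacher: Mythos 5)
Your setup (extracting the binary $g:p^2\to p$, overspilling to a $\emptyset$-definable parameter $c_3$ above $p$, and forming the $\emptyset$-definable unary $h(x)=g(x,c_3)$ with $x<h(x)<c_3$ near $p$) agrees with the paper up to this point, and your observation that $h$ pushes $p$ strictly above itself by 1-simplicity is correct. But the fixed-point construction that follows has genuine gaps that I do not think are repairable in the form you propose. The orbit $\{h^n(b)\}$ need not converge to any element of the monster: $\aleph_1$-saturation gives you some realization of the partial type $\{x>h^n(b):n\in\omega\}\cup\{x<c_3\}$, but that realization is far from unique and there is no reason it should be a fixed point of $h$ (o-minimal structures are not Dedekind complete, and a bounded increasing $\omega$-sequence generally has no definable supremum). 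Relatedly, the map $b\mapsto\lim_n h^n(b)$ is not first-order and cannot be $\emptyset$-definable, so the ``kind-preservation'' remark does not apply to it; this means you cannot conclude $\tau$ is a cut, nor that $\tau$ is even a well-defined $\emptyset$-type. Finally, even if you had a cut $\tau$, the set of elements strictly between $\tau$ and $c_3$ can be split by $\emptyset$-definable points and so need not be a single complete 1-type. You flag this last difficulty yourself, but it is not bookkeeping---it is a symptom of the construction aiming at the wrong target.

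The paper's route avoids iteration entirely and is worth internalizing: take $q$ to be the non-cut directly beneath $c$, namely $q(x)=\{x<c\}\cup\{x>y:y\in\cl(\emptyset)\wedge y<c\}$. To see this is a genuine non-cut, note that $\cl(\emptyset)\cap(a,c)$ is nonempty (since $\cl(\emptyset)$ approaches the cut $p$ from the right), and because $g(x,c)$ is $\emptyset$-definable and sends any $x\in(a,c)$ to something strictly between $x$ and $c$, this set can have no maximum; hence $\cl(\emptyset)$ approaches $c$ from below and $q$ has definable supremum $c$ but no definable infimum. Then $g(x,c)$ maps $q$ into itself (if $e\models q$ then $e<g(e,c)<c$ and $g(e,c)$ is still above every $\cl(\emptyset)$-point below $c$), so $q$ is 1-nonsimple, and by the non-cut faithfulness lemma it is faithful. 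The key insight you were missing is that one need not locate a ``stable'' region via dynamics; the $\emptyset$-definable point $c$ already anchors a canonical non-cut, and the overspill argument makes $h$ act on it directly.
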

\begin{proof}
	Let $n$ be minimal such that $p$ is $n$-nonsimple.  By hypothesis and the previous lemma, we may conclude that $n=2$.  By exchange, there is a function $g:p^2\to p$ where if $x<y$ realize $p$, then $x<g(x,y)<y$.  But then this property holds on an $\emptyset$-definable interval $I$ containing $p$.  Since $\cl(\emptyset)$ approaches $p$ from the right, let $c\in\cl(\emptyset)$ be some element such that $c\in I$ and $p(x)$ implies $x<c$.  Then the function $g(x,c)$ is $\emptyset$-definable, defined on the open interval $(a,c)$ which includes $p$, and if $a<x<c$, then $x<g(x,c)<c$.
	
	Since $\cl(\emptyset)$ approaches $p$ from the right, let $c'\in\cl(\emptyset)$ be such that $p(x)<c'<c$.  Then the set $\{y\in\cl(\emptyset):a<y<c\}$ is nonempty, and because $g(x,c)$ is $\emptyset$-definable, it must therefore approach $c$ from the left.  So the type $q(x)=\{x<c\}\cup\{x>y:y\in\cl(\emptyset)\land y<c\}$ is a non-cut, and is nonsimple under the function $g(x;c)$.  By Lemma~\ref{noncutsAreFaithful}, $q$ is faithful, completing the proof.
\end{proof}

While the same idea applies to the 1-nonsimple case, the proof is more delicate.  We will temporarily require the notion of \emph{$n$-unfaithfulness}; the property of a type which says that it is unfaithful, and there is a witness of length at most $n$.

\begin{lemma}\label{firstUnfaithfulCutLemma}
	If a cut $p\in S_1(T)$ is 1-nonsimple but 2-unfaithful, then for any $b$ realizing $p$, the non-cut below $b$ is 1-nonsimple as a $b$-type.
\end{lemma}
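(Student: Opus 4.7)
My plan is to construct, for each $b \models p$, a $b$-definable function $h$ on the non-cut $r$ below $b$ witnessing its 1-nonsimplicity. The ingredients are a $\emptyset$-definable 1-nonsimplicity witness $f \colon p \to p$ with $f(x) > x$, and a $\emptyset$-definable 2-unfaithfulness witness $g(x,y)$ such that $g(a_1,a_2) \not\sim a_i$ on some pair $(a_1,a_2)$.

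First I would split on whether $p$ is 1-unfaithful. In that easier subcase, a unary witness $\phi$ with $\phi(a) \not\sim a$ combines with $f$ via $h(x) := \phi^{-1}(f(\phi(x)))$, a $\emptyset$-definable function that by 1-faithfulness of the non-unfaithful parts preserves $[x]$ and can, by choosing $\phi$ and $f$ carefully, be calibrated to land in $r$. In the main subcase, $p$ is 1-faithful, so $\cl^p(a) \subseteq [a]$ for every singleton. I would first show that for $a \models r$, $a \sim b$: since $f^{-1}(b) \in p$ is a $b$-definable element strictly less than $b$, the non-cut definition forces $a > f^{-1}(b)$, placing $a \in [b]$ by exchange and monotonicity of $f$. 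Next, by a type-transfer argument --- using 1-faithfulness to localize the ``higher-Archimedean content'' of $\cl^p(a,b)$ entirely in the joint closure --- I would show that the pair $(a, b)$ also witnesses 2-unfaithfulness, so that $c := g(a, b) \in \cl^p(a,b)$ lies in an Archimedean class strictly above $[b]$ (WLOG $c > b$).

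Then I would define $h(a)$ to be the unique $y$ satisfying $g(a, y) = f^{-1}(g(a, b))$. By 1-faithfulness, $f^{-1}(c) \sim c$ in the higher Archimedean class, so by type-transfer the solution $y$ lies in $[b]$ with $y < b$ and $y \neq b$ (nontriviality follows from $f^{-1}$ being nontrivial together with the monotonicity of $g(a, \cdot)$). The main obstacle --- which I expect to be the real technical content --- is verifying $h(a) \in r$, i.e., that $h(a)$ lies above every $b$-definable element strictly less than $b$, particularly above each $f^{-k}(b)$. This requires showing that the ``small'' shift $c \to f^{-1}(c)$ in the higher class, when pulled back through the second argument of $g$, produces an infinitesimal shift at $b$ rather than a macroscopic one; this is essentially a continuity-monotonicity argument exploiting the steepness of $g(a, \cdot)$ as it crosses from $[b]$ into the higher class, combined with the fact that $b$-definable elements of $[b]$ below $b$ are generated by iterated $f^{-1}$ and hence live at macroscopic distances from $b$. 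If this candidate $h$ does not directly land in $r$, I would adjust by inverting $g$ in the first argument instead, by composing $f$ several times, or by mixing with constructions from the 1-unfaithful subcase, until a suitable witness is produced.
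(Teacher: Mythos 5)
Your proposal takes a genuinely different route from the paper and contains a gap that I believe is fatal. In the main (1-faithful) subcase you take $a\models r$, so that $a\sim b$, and then feed the pair $(a,b)$ into the 2-unfaithfulness witness $g$, claiming that $c:=g(a,b)$ lies in an Archimedean class strictly above $[b]$. But the uniformity of $g$ as an unfaithfulness witness rests on the completeness of the \emph{spread} 2-type $r(x,y)=p(x)\cup p(y)\cup\{[x]<[y]\}$; a pair $(a,b)$ with $a\models(b)^-$ realizes a \emph{different} complete 2-type over $\emptyset$ (here $\tp(a/b)$ is a non-cut, whereas for a spread pair it is a cut). The two kinds of pairs are not conjugate over $\emptyset$, so there is no reason for $g(a,b)$ to be defined in the controlled sense, let alone to escape $[b]$. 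You flag a ``type-transfer argument'' at exactly this point, but you do not supply one, and I do not see one: 2-unfaithfulness witnessed on spread pairs does not entail that close pairs are unfaithful at all. Everything downstream --- the definition of $h(a)$ via $g(a,y)=f^{-1}(g(a,b))$, the ``real technical content'' of verifying $h(a)\in r$, and the 1-unfaithful side case (``calibrated to land in $r$'') --- inherits this uncertainty and is left unargued.

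The paper avoids this issue by never leaving the spread regime. It places the witness $g$ on a definable 2-cell $\mathcal U$ around a spread pair $(b,a)$, and the key ingredient you are missing is the $\emptyset$-definable right-boundary function $R$ of that cell: $R(x)<x$ realizes $p$, and for $x\in(y)^-$ the pair $(y,R(x))$ still lies in $\mathcal U$, so $g(R(x),y)$ is well-defined and strictly increasing in $x$. Composing with the inverse of the $\emptyset$-definable supremum function $L(y)$ of that image produces the desired $b$-definable bijection $(b)^-\to(b)^-$, and nontriviality follows because a trivial $h$ would make $g(R(x),y)$ independent of $y$, contradicting its strict monotonicity. The argument is uniform with no split on 1-(un)faithfulness. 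If you want to salvage your direction, the missing idea is precisely this passage through a cell-boundary function so that $g$ is only ever evaluated where its behavior is controlled by a complete type you have already analyzed.
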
\begin{proof}
Suppose that $p$ is 2-unfaithful, and pick a witnessing pair.  That is, there is some pair $[a]<[b]$ from $p$ and a $\emptyset$-definable $g$ where $[a]<[g(x,y)]<[b]$.  We will show that the non-cut $(b)^-$ is 1-nonsimple as a $b$-type.

Since the type $r(x;y)=p(x)\cup p(y)\cup\{[x]<[y]\}$ is complete, $g$ witnesses unfaithfulness for every sufficiently spread pair.  In particular, $g$ is defined, continuous, strictly increasing in both of its arguments, and satisfies $x<g(x,y)<y$ everywhere on this type.  We may therefore pass to a definable 2-cell $\m U$ containing the descending pair $(b,a)$ on which these properties are satisfied.  Its underlying interval must contain all of $p(x)$, and its boundary functions $L(x)$ and $R(x)$ are everywhere defined on $p(x)$.

Then $L(b)<a<R(b)$; since $[a]<[b]$ in $p(x)$, this means $L(b)$ is beyond the edge of $p(x)$, so may as well be an element of $\cl(\emptyset)$, since this approaches $p(x)$ from the left.  In particular, if $y>x$ are from $p$, then they're in $\m U$ if and only if $x<R(y)$. Since $R(y)<y$ on $p(y)$, $R$ is a strictly increasing function $p\to p$.

Consider the function $g(R(x),y)$.  This is defined for any $y$ from $p$ and any $x\in (y)^-$.  Since $g$ is strictly increasing in both arguments, and $R$ is strictly increasing, the composition will be strictly increasing in both of its arguments.  If we fix $y$ (as $b$, for example), then since the function is strictly increasing in $x$, it must be a bijection from $(y)^-$ to some other non-cut $(L(y))^-$ for some $\emptyset$-definable function $L:p\to p$.  Therefore $h(x,y)=L^{-1}(g(R(x),y))$ is a function $(y)^-\to (y)^-$; it only remains to show it's nontrivial.

But if it's simple, then $L(y)=g(R(x),y)$ for all $y$ and all $x\in (y)^-$.  But this is impossible, since $g(R(x),y)$ is strictly increasing in $x$, but the equality would imply $g(R(x),y)$ is locally constant in $x$, a contradiction.  In particular, $h(x,b)$ is a nontrivial $b$-definable function from $(b)^-$ to itself, completing the proof.
\end{proof}

By repeatedly applying the previous lemma, we can produce a faithful non-cut from any unfaithful cut.

\begin{lemma}
	If $p\in S_1(T)$ is an unfaithful cut, then there is a faithful non-cut $q(x)\in S_1(\emptyset)$.
\end{lemma}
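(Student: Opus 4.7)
The plan is to split on whether $p$ is $1$-simple or $1$-nonsimple and apply the preceding lemmas in each case.

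If $p$ is $1$-simple, then the preceding lemma (nonsimple cuts are $2$-nonsimple) gives that $p$ is a $1$-simple, $2$-nonsimple cut. The lemma immediately before it (``if $p\in S_1(T)$ is a nonsimple, $1$-simple cut, then there is a faithful non-cut $q\in S_1(T)$'') applies directly and produces the desired $q$; the unfaithfulness hypothesis is not used in this subcase.

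If instead $p$ is $1$-nonsimple, I would first arrange $2$-unfaithfulness so as to use Lemma~\ref{firstUnfaithfulCutLemma}. If $p$ is $n$-unfaithful with minimal witness $a_1<\cdots<a_n$ of length $n\geq 3$, then the tail pair $(a_{n-1},a_n)$ already witnesses $2$-unfaithfulness of $p$ over the finite parameter set $A=\{a_1,\ldots,a_{n-2}\}$; running the remaining argument over $A$ and transferring back to $\emptyset$ via the earlier ``parameters are unneeded'' lemma reduces everything to $n=2$. Then Lemma~\ref{firstUnfaithfulCutLemma} supplies a $\emptyset$-definable function $h(x,y)$ which, for each $y\models p$, restricts to a nontrivial self-bijection of $(y)^-$. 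By overspill $h$ is defined on a $\emptyset$-definable open set containing pairs $(x,y)$ with $y$ slightly above $p$, so using that $p$ is a cut we fix $c\in\cl(\emptyset)$ with $c>p$ lying in this open set. Mimicking the construction in the $1$-simple subcase, the type $q(x)=\{x<c\}\cup\{x>d:d\in\cl(\emptyset),\,d<c\}$ is a non-cut in $S_1(\emptyset)$ with $\emptyset$-definable supremum $c$, and $h(\cdot,c)$ is a nontrivial $\emptyset$-definable self-map of $q$. Hence $q$ is $1$-nonsimple, and Lemma~\ref{noncutsAreFaithful} then gives that $q$ is faithful.

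The main obstacle is verifying that $q$ really is a complete type in $S_1(\emptyset)$ rather than a partial type with several completions: this requires $c$ to be a $\cl(\emptyset)$-limit from below within the interval just above $p$, which I expect to follow from the density of $\cl(\emptyset)$ near the cut $p$ (and, if needed, from iterating $h$ itself to produce further $\cl(\emptyset)$-elements approaching $c$ from below). Once $q\in S_1(\emptyset)$ is confirmed, the nontriviality of $h(\cdot,c)$ on $q$ is automatic: its fixed-point locus is closed and $\emptyset$-definable, hence by continuity-monotonicity it is a finite union of $\cl(\emptyset)$-points and thus disjoint from $q$.
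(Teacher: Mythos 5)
Your $1$-simple subcase is correct (and is the same as the paper's), and your $n=2$ endgame (Lemma~\ref{firstUnfaithfulCutLemma} $\to$ overspill $\to$ $(c)^-$ for $c\in\cl(\emptyset)$, with the iterate-$h$ argument to rule out a $\cl(\emptyset)$-predecessor of $c$) is essentially the paper's argument for $k=0$. The gap is in how you dispose of the $n\ge 3$ case.

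Two problems with ``run the argument over $A=\{a_1,\dots,a_{n-2}\}$ and transfer back via Corollary~\ref{parametersAreUnneeded}.'' First, after relativizing to $A$, Lemma~\ref{firstUnfaithfulCutLemma} produces an $A$-definable function, not a $\emptyset$-definable one, so the overspill set and the resulting ``non-cut'' are only $A$-definable; every subsequent step in your argument (choosing $c\in\cl(\emptyset)$, iterating $h$ to see that $\cl(\emptyset)$ accumulates at $c$ from below, concluding that $(c)^-\in S_1(\emptyset)$ is a non-cut, and that $h(\cdot,c)$ witnesses nonsimplicity over $\emptyset$) quietly uses $\emptyset$-definability of $h$, which you no longer have. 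Second, Corollary~\ref{parametersAreUnneeded} transfers only the existence of a \emph{nonsimple} type to $\emptyset$, not a \emph{faithful non-cut}; the restriction of your $A$-type to $\emptyset$ could perfectly well be another cut (indeed the original $p$ already witnesses the conclusion of that corollary, so nothing is gained) or an atomic type, and neither is what the lemma asks you to produce. The paper avoids both issues by not reducing to $n=2$ up front: it applies Lemma~\ref{firstUnfaithfulCutLemma} over the \emph{upper} parameters $\o c$ and then, using a cell $\m U$ containing $(b,\o c)$ together with the minimality of the unfaithful tuple, replaces the parameters $c_k,c_{k-1},\dots,c_1$ one at a time by elements of $\cl(\emptyset)$, so that the final binary function $g(x,y)$ really is $\emptyset$-definable before the overspill step. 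Some such parameter-elimination argument is needed in place of your appeal to Corollary~\ref{parametersAreUnneeded}.
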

\begin{proof}
	Let $p\in S_1(T)$ be unfaithful.  We may assume $p$ is 1-nonsimple.  Fix a tuple from $p$ of minimal length which witnesses unfaithfulness; this length must be at least two, so we label it $[a]<[b]<[c_1]<\cdots<[c_k]$ where $k\geq 0$, such that for some $\emptyset$-definable $f(x,y,\o z)$, $[a]<[f(a,b,\o c)]<[b]$.  The type $q(x)=p(x)\cup\{[x]<[c_1]\}$ is a complete $\o c$-type which is $2$-unfaithful under the function $f(x,y,\o c)$, so by Lemma~\ref{firstUnfaithfulCutLemma}, there is a $b\o c$-definable function $g(x,b,\o c)$ where if $x\in (b)^-$, then $x<g(x,b, \o c)<b$.
	
	This is a definable property, so pick a $k+1$-cell $\m U$ containing the tuple $(b,c_1, \ldots, c_k)$ such that if $(y,z_1, \ldots, z_k)$ is in $\m U$, and if $x\in (y)^-$, then $x<g(x,y,\o z)<y$.  In particular, there are $\emptyset$-definable function $L(y,z_1, \ldots, z_{k-1})$ and $R(y,z_1, \ldots, z_{k-1})$ such that $L(b,c_1, \ldots, c_{k-1})<c_k<R(b,c_1,\ldots,c_{k-1})$.  By minimality of the length of the unfaithful tuple, either $R$ is above $p$ entirely or is Archimedean-equivalent to $c_{k-1}$, the latter of which would contradict the fact that $[c_{k-1}]<[c_k]$.  So $R(b,c_1,\ldots,c_{k-1})$ is above $p(x)$, so there is an element $d_k\in\cl(\emptyset)$ such that $(b,c_1, \ldots, c_{k-1},d_k)\in\m U$, and therefore, that if $x\in (b)^-$, then $x<g(x,b,c_1, \ldots, c_{k-1},d_k)<b$.
	
	In this way, we can replace all the $c_i$ with elements of $\cl(\emptyset)$.  So there is a $\emptyset$-definable $g(x,y)$ where for \emph{any} $y$ from $p$, and any $x\in (y)^-$, $x<g(x,y)<y$.  Since this property holds on an infinite set, it holds on an interval $I$, which must necessarily include all of $p$.  Since $\cl(\emptyset)$ approaches $p$ from the right, there is an element $b'\in\cl(\emptyset)$ which is in $I$.  But then the $\emptyset$-definable function $g(x,b')$ is a function from the $\emptyset$-definable non-cut $(b')^-$ to itself, completing the proof.
\end{proof}

We have now proved the following:

\begin{lemma}\label{nonsimpleNonisolatedGivesFaithful}
	If $T$ admits a nonisolated nonsimple type over $\emptyset$, then $T$ admits a faithful type over $\emptyset$.
\end{lemma}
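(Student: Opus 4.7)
The plan is to simply assemble the preceding lemmas in a case analysis on the type of the given nonsimple nonisolated type $p \in S_1(\emptyset)$. Since every nonisolated type is either a cut or a non-cut (by Marker's trichotomy stated earlier), these two cases are exhaustive. The real work has already been carried out in the sub-lemmas; this final lemma is the consolidation.

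First I would dispatch the non-cut case. If $p$ is a nonsimple non-cut over $\emptyset$, then Lemma~\ref{noncutsAreFaithful} directly tells us $p$ is faithful, and since non-cuts are by definition nonisolated, $p$ is itself the desired type and we are done.

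Next I would handle the cut case. Suppose $p$ is a nonsimple cut over $\emptyset$. Either $p$ is faithful, in which case $p$ itself serves as the required faithful type (again nonisolated because it is a cut), or $p$ is unfaithful. In the unfaithful subcase, we apply the preceding lemma (on unfaithful cuts) to produce a faithful non-cut $q \in S_1(\emptyset)$, which is automatically nonisolated. Either way, a faithful nonisolated type over $\emptyset$ exists.

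There is no real obstacle at this stage; the hard technical work was in establishing the previous lemmas, especially the extraction of a faithful non-cut from an unfaithful cut (which required the two-step argument through Lemma~\ref{firstUnfaithfulCutLemma} and the subsequent lemma replacing Archimedean parameters with algebraic ones via cell decomposition). Given those in hand, the present statement is immediate by case analysis. One minor bookkeeping point to verify is that the faithful type produced in the unfaithful-cut case is indeed over $\emptyset$ and not merely over parameters, but this was built into the statement of that lemma, so no additional argument is needed.
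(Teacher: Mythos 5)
Your proof is correct and follows exactly the paper's intended route: the paper simply states this lemma after the run of sub-lemmas with the line "We have now proved the following," and what you have written out is precisely the implicit case analysis (non-cut $\Rightarrow$ faithful by Lemma~\ref{noncutsAreFaithful}; cut $\Rightarrow$ either already faithful, or unfaithful and hence yields a faithful non-cut over $\emptyset$ by the final lemma of the subsection). Your observation that the produced type is automatically nonisolated, and that the unfaithful-cut lemma explicitly lands in $S_1(\emptyset)$, correctly closes the only bookkeeping points.
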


\subsection{A Nonsimple Isolated Type}

Our goal in this subsection is to show that if there is a nonsimple, atomic type over $\emptyset$, then that type has a canonical tail. Throughout, $p$ will refer to a nonsimple atomic type, and $I$ will refer to the atomic interval which generates it.  We will refer to the left and right endpoints of $I$ as $-\infty$ and $\infty$, respectively, though they may actually be standard elements of the structure.  Throughout this section, $\cl^I(A)$ will refer to the closure of $A$ within $I$; this is used instead of $p$ to emphasize that $p$ is isolated.

Because of the restrictions in Lemma~\ref{bigNMakesClosureDense}, the cases where this type is 1-nonsimple and 1-simple are fairly different.  We deal with the 1-nonsimple case first.

\begin{lemma}
	If $p\in S_1(T)$ is a 1-nonsimple atomic type, then $p$ has a canonical tail.
\end{lemma}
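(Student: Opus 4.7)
With $p$ being 1-nonsimple the minimal nonsimplicity index is $n=1$, so the canonical-tail condition reduces to the following: for any singletons $A=\{a\}$ and $B=\{b\}$ drawn from $p$, and any $c,d\models p$ with $c,d>\cl^p(a,b)$, the relations $c\sim_a d$ and $c\sim_b d$ coincide. By the symmetry of the statement in $a$ and $b$ I would prove only the forward direction, and by the symmetry of $\sim$ in its two arguments I would also assume $c<d$ without loss of generality. The idea is to transfer a witness of $c\sim_a d$ directly into $\cl^p(b,c)$ and $\cl^p(b,d)$, replacing the parameter $a$ by an appropriate element of $\cl^p(b)$.

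Unpacking $c\sim_a d$ produces a $\emptyset$-definable partial function $g(x,y)$ with $g(a,c)\models p$ and $g(a,c)\geq d$ (the upper bound on $d$ inside $\cl^p_a(c)$), together with a symmetric witness $g'(a,d)\leq c$ on the other side. Viewing $h(x)=g(x,c)$ as a $c$-definable partial function, continuity--monotonicity gives a maximal cell $U\subseteq p$ containing $a$ on which $h$ is continuous, monotonic, and takes values in $p$; the boundaries of $U$ are either $\pm\infty$ (in $p$) or $c$-definable elements of $\cl^p(c)$. If $h$ is increasing on $U$, any $a'\in U$ with $a'>a$ satisfies $h(a')\geq h(a)\geq d$; if decreasing, any $a'\in U$ with $a'<a$ works. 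Meanwhile, 1-nonsimplicity gives a $\emptyset$-definable $f:p\to p$ with no fixed points, and iterating $f$ and $f^{-1}$ on $b$ shows that $\cl^p(b)$ is cofinal in $p$ in both directions, so elements of $\cl^p(b)$ exist on either side of $a$. Choosing $a'\in\cl^p(b)\cap U$ on the correct side then gives $h(a')=g(a',c)\in\cl^p(b,c)$, the required upper-bound witness, and the analogous argument using $g'(x,d)$ produces the lower-bound witness for $c$ in $\cl^p(b,d)$.

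The main obstacle is ensuring that the chosen $a'\in\cl^p(b)$ actually lies in the monotonicity cell $U$. Because $\cl^p(b)$ may be quite sparse---for instance a discrete orbit of $f$---we cannot choose $a'$ arbitrarily close to $a$, so we must instead show that the $c$-definable boundary of $U$ on the relevant side of $a$ exceeds at least one element of $\cl^p(b)$ on that side. To prove this I would argue that any $c$-definable element of $p$ trapped between $a$ and the next element of $\cl^p(b)$ on the relevant side would, writing it as a $\emptyset$-definable function evaluated at $c$, force $c$ into a position Archimedean-equivalent to something in $\cl^p(a,b)$ via exchange, contradicting $c>\cl^p(a,b)$. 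Carrying this out carefully---identifying which $c$-definable boundaries are compatible with the hypothesis $c>\cl^p(a,b)$ and showing that they cannot obstruct---is the technical core, and is where the hypothesis that $p$ is 1-nonsimple (rather than higher-index nonsimple) makes the analysis tractable, since the whole argument is carried out with singleton closures $\cl^p(a)$, $\cl^p(b)$, which are already cofinal and whose boundary behavior in $p$ is directly governed by the $\emptyset$-definable function $f$.
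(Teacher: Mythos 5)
Your plan diverges from the paper and contains a genuine error at its hinge point. You assert that ``iterating $f$ and $f^{-1}$ on $b$ shows that $\cl^p(b)$ is cofinal in $p$ in both directions, so elements of $\cl^p(b)$ exist on either side of $a$.'' This is false: the hypothesis $c,d>\cl^p(ab)$ already exhibits realizations of $p$ strictly above all of $\cl^p(b)$, so $\cl^p(b)$ is certainly not upward cofinal in $p$, and --- worse for your construction --- if $[a]<[b]$ (which is exactly the configuration the paper shows is forced), there need be \emph{no} element of $\cl^p(b)$ below $a$, so the $a'$ you want may simply not exist on the side where $h$ is monotone. Your final paragraph then acknowledges that controlling the $c$-definable boundary of the cell $U$ relative to $\cl^p(b)$ is ``the technical core'' and sketches an exchange argument, but you don't carry it out, and the sketch doesn't address the case where $\cl^p(b)$ never enters $U$ at all. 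So as written the plan has a hole precisely where the work needs to happen.

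The paper's proof takes a different route that avoids the cell-boundary problem entirely. Starting from the same witness $f$ with $f(c,a)\geq d$, it first uses monotonicity of $f(c,\cdot)$ and the hypothesis $c\not\sim_b d$ to pin down the order configuration $[a]<[b]<[c]<[d]$ (ruling out $f(c,\cdot)$ increasing, $b<a$, and $a\sim b$ in turn). It then constructs a single element $a'\in\cl(ab)$ with $[b]<[a']<[c]$, observes $\cl(ab)=\cl(ba')$ so $c\sim_{ba'}d$, and notices that $a',c,d$ all realize the non-cut $q$ over $b$. Since $a'\not\sim_b c$, the relation $c\sim_{ba'}d$ is an instance of $\cl^q_b$-definability over inequivalent realizations of $q$; faithfulness of nonsimple non-cuts (Lemma~\ref{noncutsAreFaithful}) then forces $c\sim_b d$, the desired contradiction. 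The key tool --- faithfulness of non-cuts --- is completely absent from your plan, and it is exactly what lets the paper sidestep the question of whether any particular $\emptyset$-definable orbit intersects a given monotonicity cell.
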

\begin{proof}
	Suppose not.  Then there are $a,b,c,d$ in $I$, such that $c,d>\cl^I(ab)$ and $c\sim_a d$ but $c\not\sim_b d$.  We may assume that $c<d$; then there is a definable function $f(x,y)$ such that $f(c,a)\geq d$.  By completeness of the $c$-type $\{x\in I\}\cup\{[x]<[c]\}$, $f(c,y)$ is strictly monotone and continuous on the interval $(-\infty,c')$ for some $c'\in\cl^I(c)$.  If $f(c,y)$ is strictly increasing, then pick any $c''<c'$ in $\cl^I(c)$, observing that $a<c''$, so $f(c,c'')>f(c,a)\geq d$, so that $c\sim d$ (and therefore $c\sim_b d$ as well, a contradiction).
	
	Therefore $f(c,y)$ is strictly decreasing.  If $b<a$ then $f(c,b)>f(c,a)\geq d$ so $c\sim_b d$ again; therefore $a<b$.  Finally, if $a\sim b$, then there is $b'\leq a$ in $\cl^I(b)$, so that $f(c,b')>f(c,a)\geq d$, a contradiction.  So $[a]<[b]$, and in fact $[a]<[b]<[c]<[d]$.
	
	Let $a'=f(a,b)$, defined since $[a]<[b]$, and by construction of $f$, $[a]<[b]<[a']<[c]<[d]$.  Since $\cl(ab)=\cl(ba')$ and $c\sim_a d$, $c\sim_{ab}d$, so $c\sim_{ba'}d$.  Therefore, there is a $b$-definable function $g(x,y)$ where $g(a',c)\geq d$.  But all of the elements $a'$, $c$ and $d$ come from the non-cut $q(x)$ above $b$.  Therefore, we may say that $c\sim_{a'}d$ in $q(x)$.  Since $\cl(ba')=\cl(ab)$, by construction of $c$, $a'$ and $c$ are inequivalent in $q(x)$.  But then by faithfulness, $c\sim d$ in $q(x)$, or rather, $c\sim_b d$.
\end{proof}

We can now deal with the 1-simple case by an inductive argument, using Lemma~\ref{bigNMakesClosureDense} freely.

\begin{lemma}
	If $p\in S_1(T)$ is nonsimple and atomic, then $p$ has a canonical tail.
\end{lemma}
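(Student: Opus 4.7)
The plan is to induct on the minimal nonsimplicity index $n$ of $p$, with the base case $n = 1$ supplied by the preceding lemma. For $n \geq 2$, the key auxiliary object will be $q_a = (a)^+$ for $a \in p$, the atomic $a$-type of realizations of $p$ above $a$. First I will verify that $q_a$ has minimal nonsimplicity index exactly $n - 1$: fixing the first coordinate of an $\emptyset$-definable witness $f : p^n \to p$ for $n$-nonsimplicity (chosen with $f(\bar x) > x_n$ on ascending tuples) yields a witness of $(n-1)$-nonsimplicity of $q_a$; conversely, 1-simplicity of $p$ plus completeness of the $\emptyset$-type of an ascending $(n-1)$-tuple from $p$ rules out any lower-index witness, since such a witness would lift to an $\emptyset$-definable witness of $(n-1)$-nonsimplicity of $p$ and contradict minimality. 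The inductive hypothesis (applied over parameters, as flagged earlier) then provides a canonical tail for each $q_a$.

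Given $A, B \subset p$ of size $n$ and $c, d > \cl^p(AB)$ with $c \sim_A d$, I aim to conclude $c \sim_B d$. I will first handle the clean subcase $\min A = \min B = a$: writing $A = \{a\} \cup A_0$, $B = \{a\} \cup B_0$ with $A_0, B_0 \subset q_a$ of size $n - 1$, any $e \in \cl^p_A(c)$ with $e \geq d > a$ lies in $q_a$, so $c \sim_A d$ iff $c \sim_{A_0} d$ in $q_a$, and symmetrically for $B$; canonical tail of $q_a$ applied to $A_0, B_0$ (above $\cl^{q_a}(A_0 B_0) \subseteq \cl^p(AB)$) closes the subcase. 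For the general case, I assume WLOG $a_1 = \min A < b_1 = \min B$ and introduce $A^* = \{a_1, b_2, \ldots, b_n\}$, which has size $n$ and shares minimum $a_1$ with $A$, giving $\sim_A = \sim_{A^*}$ by the common-minimum case. The remaining step $\sim_{A^*} = \sim_B$ goes through $q' = (b_2)^+$: set $e_{A^*} = f(a_1, b_2, \ldots, b_n)$ and $e_B = f(b_1, b_2, \ldots, b_n)$, both of which lie in $q'$ since $f$ exceeds its last argument. As $p$ is $(n-1)$-simple, $\cl^p(b_2, \ldots, b_n) = \{b_2, \ldots, b_n\}$ contains neither $e_{A^*}$ nor $e_B$, and exchange yields $\cl(A^*) = \cl(b_2, \ldots, b_n, e_{A^*})$ and $\cl(B) = \cl(b_2, \ldots, b_n, e_B)$. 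Hence $c \sim_{A^*} d$ iff $c \sim_{\{b_3, \ldots, b_n, e_{A^*}\}} d$ in $q'$, and $c \sim_B d$ iff $c \sim_{\{b_3, \ldots, b_n, e_B\}} d$ in $q'$, both reductions to size-$(n-1)$ parameter sets; canonical tail of $q'$ finishes the argument.

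The main obstacle I foresee is the exchange-and-replace step in the general case: rewriting $\cl(a_1, b_2, \ldots, b_n)$ as $\cl(b_2, \ldots, b_n, e_{A^*})$ (and analogously for $B$) so that both $A^*$ and $B$ give rise to size-$(n-1)$ parameter sets living entirely inside $q'$. This rewriting depends on the fact that closures of size-$(n-1)$ subsets of $p$ are trivial (a consequence of $(n-1)$-simplicity) and on the $n$-nonsimplicity witness $f$ producing a fresh element above the last input that exchange identifies with the ``low'' parameter $a_1$ (respectively $b_1$). Once this trick is secured, the problem reduces cleanly to the inductive canonical tail of $q'$ applied to size-$(n-1)$ parameter sets, completing the induction.
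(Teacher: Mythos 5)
Your proof is correct, and it takes a genuinely different route from the paper's. The paper's argument fixes a large parameter set $A$ and proves a one-element removal claim: if $c,d > \cl^p(Aa)$ and $c\sim_{Aa}d$, then $c\sim_A d$. That claim is proved by a three-way case analysis on $\tp(a/A)$ and $\tp(a/Ac)$ — non-cut over $A$, cut over $Ac$, or (the delicate case) cut over $A$ but non-cut over $Ac$ — each treated with different tools (faithfulness of non-cuts, density of $\cl^I(Ac)$, and a contradiction with the cut/non-cut dichotomy for definable functions). Iterating the claim $n$ times transfers $\sim_A$ to $\sim_B$ through $\sim_{AB}$.

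Your argument instead descends on the nonsimplicity index by relativizing. The key observations that carry your proof are: (i) for $1$-simple $p$, the $a$-type $(a)^+$ is again \emph{atomic} (since $\cl^p(a)=\{a\}$) with minimal nonsimplicity index exactly $n-1$, so the inductive hypothesis applies; (ii) for $c,d > \cl^p(AB)$ and $a = \min A$, the relations $\sim_A$ on $p$ and $\sim_{A\setminus\{a\}}$ on $(a)^+$ agree, since the substantive half of the squeeze is one-sided once $A$ sits below $c,d$; and (iii) the exchange-and-replace trick — swapping the low element $a_1$ (resp.\ $b_1$) for $e_{A^*} = f(a_1,b_2,\ldots,b_n)$ (resp.\ $e_B = f(b_1,\ldots,b_n)$), which is legitimate precisely because $(n-1)$-simplicity makes $\cl^p(b_2,\ldots,b_n)$ trivial — which converts the general case into two applications of the common-minimum case. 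This avoids the type-by-type case analysis entirely and replaces it with a single structural maneuver. Both arguments share the same base case ($n=1$, from the preceding lemma) and both rely on the paper's standing remark that all of these lemmas work verbatim over parameters; your induction invokes that remark more centrally, since the inductive hypothesis is applied to $(a)^+$ as an $a$-type, but this is a legitimate use.

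One point worth stating explicitly if you write this up: the equivalence in (ii) deserves a sentence. For $c<d$ with $c,d>\cl^p(AB)\supseteq A$, the condition $c\sim_A d$ reduces to the existence of $c_2\in\cl^p_A(c)$ with $c_2\geq d$ (the other half is free, taking $\max A$), and any such $c_2$ lies above $a=\min A$ and hence in $(a)^+$; so $\sim_A$ restricted above $A$ really is $\sim_{A\setminus\{a\}}$ computed inside the $a$-type $(a)^+$. This is the hinge that lets the canonical tail of $(a)^+$ do the work.
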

\begin{proof}
	Let $n$ be minimal such that $p$ is $n$-nonsimple.  By the previous lemma, we may assume $n\geq 2$.  We use the following claim as an inductive step:
	
	\begin{claim}
		Let $a,c,d$ realize $p$, and let $|A|\geq n$ be a set of realizations of $p$.  Suppose $c,d>\cl^p(Aa)$ and $c\sim_{Aa}d$. Then $c\sim_Ad$ as well.
	\end{claim}
	\begin{claimproof}
		Let $A$, $a$, $c$, and $d$ be as described.  Since $c\sim_{Aa}d$, there is an $A$-definable function $f(x,y)$ such that $f(c,a)\geq d$.  Since $c\not\sim_Ad$, $[c]<[d]$ in the non-cut $(\infty)^-_A$ (a non-cut since $n>1$); that is, the left non-cut at infinity with respect to $A$.  If $a$ also realizes this type, then because $c>\cl^I(Aa)$, it must be that $[a]<[c]<[d]$ in $(\infty)^-_A$.  Then, since $f(a,c)\in\cl^I_A(ac)$, by faithfulness of non-cuts, $f(a,c)\sim_Aa$, so $d\sim_Ac$ by convexity.
		
		First, suppose $\tp(a/A)$ is a non-cut at some $L\in\cl^I(A)$, which may be $\pm\infty$; say $a\in (L)^-$ for concreteness.  Then the formula $\lim_{y\to L^-}f(x,y)=\infty$ is satisfied by $c$, and therefore by \emph{all} $x$ which are sufficiently large over $A$.  So pick some $c'\in\cl^I(A)$ where $\lim_{x\to L^-}f(c',x)=\infty$.  Let $a'=f(c',a)$.  Then $f(c',a)\in\cl(Aa)\setminus\cl(A)$, so $\cl(Aa)=\cl(Aa')$, so $c\sim_{Aa'}d$.  Moreover, $\tp(a'/A)$ is $(\infty)^-$, so by the logic above, $c\sim_Ad$.
		
		Second, suppose $\tp(a/Ac)$ is a cut.  Then the function $f(c,y)$ must be strictly monotone at $a$, else $f(a,c)\in\cl(Ac)$, so $c\sim_Ad$.  But since $\tp(a/Ac)$ is a cut, $\cl^I(Ac)$ approaches $a$ on both sides, in particular touching the nice domain of $f(c,y)$ on both sides.  So if $f(c,y)$ is strictly increasing at $a$, then pick an $a'\in\cl(Ac)$ ``just above'' $a$, noting that $f(c,a')>f(c,a)\geq d$, so $c\sim_Ad$.  The strictly decreasing case is similar.
		
		Finally, we may assume $\tp(a/A)$ is a cut and $\tp(a/Ac)$ is a non-cut. This means there is an element $L\in\cl^I(Ac)$ where (we assume) $a\in (L)^-$, but $L\not\in\cl^I(A)$.  If $f(c,y)$ is strictly decreasing at $a$, then we may still pick an element from $\cl^I(Ac)$ ``just to the left'' of $a$ to increase $f(c,a')$; therefore, $f(c,y)$ is strictly increasing at $a$.  Therefore, there is a function $g$ over $A$ such that $g(L)=c$.  But $\tp(L/A)=\tp(a/A)$ is a cut, and $g$ sends this cut to the non-cut $(\infty)^-$, which is impossible.  This contradiction proves the claim.
	\end{claimproof}
	
	The lemma follows immediately from the claim. Let $A$ and $B$ be $n$-element sets of realizations of $p$.  Let $c$ and $d$ realize $p$ and satisfy $c,d>\cl^p(AB)$.  If $c\sim_{A}d$, then $c\sim_{AB}d$ as well.  By applying the claim $n$ times, we can remove all elements of $A$ from consideration, we get $c\sim_{B}d$, establishing the canonical tail.
\end{proof}

\subsection{A Useful Class of Linear Orders}

Our goal for this subsection is a temporary departure from model theory.  We need to produce a subclass $\b T$ of the class of countable linear orders such that $(\LO,\iso)$ is Borel reducible to $(\b T,\iso)$, and where for any $L_1,L_2\in \b T$, if $L_1$ and $L_2$ are isomorphic on a tail, then $L_1$ and $L_2$ are isomorphic.  We do this by giving two maps $f$ and $g$ from $\LO$ to itself; we define $\b T$ as the image of $g\circ f$.

We define a \emph{tail} of a linear order $L$ to be any interval of the form $[a,\infty)$, interpreted in $L$, where $a$ is in $L$.  Two orders $L_1$ and $L_2$ are \emph{tail-isomorphic}, or isomorphic on a tail, if there are tails $E_1$ of $L_1$ and $E_2$ of $L_2$ such that $E_1\iso E_2$ as linear orders.

To define the maps, first define the order $X=\{0\}\cup\{x\in\b Q:1\leq x\leq 2\}\cup\{3\}$, with the inherited order from $\b Q$.  Then, defined $f:\LO\to\LO$ by $L\mapsto L\times X$, with the lexicographic order.  That is, $f$ expands every point of $L$ to a copy of $X$.

\begin{lemma}
	For any linear orders $L_1$ and $L_2$, $L_1\cong L_2$ if and only if $f(L_1)\cong f(L_2)$.
\end{lemma}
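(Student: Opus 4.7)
The forward direction is immediate: an order-isomorphism $\sigma: L_1 \to L_2$ lifts to $(\sigma, \mathrm{id}_X): L_1 \times X \to L_2 \times X$, realizing $f(L_1) \cong f(L_2)$. All the work is in the reverse direction, and my plan is to recover $L$ from $f(L)$ via an order-definable quotient.

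I will define an equivalence relation $\sim$ on $f(L)$ by declaring $x \sim y$ iff some convex subset $I \subseteq f(L)$ contains both $x$ and $y$ and is order-isomorphic to $X$. Both ``convex'' and ``order-isomorphic to $X$'' are purely order-theoretic properties, so $\sim$ is respected by every order-isomorphism $f(L_1) \to f(L_2)$. I then aim to show that the $\sim$-classes on $L \times X$ are precisely the sets $\{\ell\} \times X$ for $\ell \in L$, so that the quotient $f(L)/{\sim}$ inherits a linear order canonically isomorphic to $L$. Any isomorphism $f(L_1) \to f(L_2)$ then descends to this quotient and witnesses $L_1 \cong L_2$, as required.

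The central claim, that $(\ell_1, a) \sim (\ell_2, b)$ iff $\ell_1 = \ell_2$, splits into two parts. For the ``if'' direction, take $I = \{\ell\} \times X$, which is convex in the lexicographic order and visibly isomorphic to $X$. For ``only if'', the key structural input is that $X$ has a \emph{unique} maximal dense convex subset, namely $\mathbb{Q} \cap [1,2]$, bounded by the two isolated tags $0$ and $3$. If $\ell_1 < \ell_2$, then any convex $I$ containing $x = (\ell_1, a)$ and $y = (\ell_2, b)$ must contain the full interval $[x,y] \subseteq f(L)$. A short case analysis on $a$, $b$, and any intermediate $\ell$ will show that such an $I$ either already contains two disjoint maximal dense convex subregions (coming from two distinct copies of $X$) or is a small finite chain whose order type, together with every allowable convex extension, is not isomorphic to $X$.

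The main obstacle I anticipate is the edge case $x = (\ell_1, 3)$ and $y = (\ell_1^+, 0)$ with $\ell_1^+$ the immediate successor of $\ell_1$ in $L$: here $[x,y] = \{x,y\}$ is merely a two-element chain, so one cannot rule out $I \cong X$ from $[x,y]$ alone. I will dispatch this by enumerating how far the min and max of $I$ can be pushed, and verifying that every enlargement either strictly enters the dense middle of one of the copies $\{\ell_1\} \times X$ or $\{\ell_1^+\} \times X$ (producing a dense convex subregion sitting strictly between the isolated tags and the new endpoints, so yielding an order type with isolated points in the wrong configuration relative to $X$) or spans an entire copy (contributing two disjoint dense subregions to $I$). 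Once this edge case is resolved, the remaining sub-cases are routine, and the identification $f(L)/{\sim} \cong L$ follows by inspection.
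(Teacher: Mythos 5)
Your proof is correct, but it takes a genuinely different route from the paper. The paper recovers $L$ from $f(L)$ as a \emph{definable subset}: it exhibits a single $\{<\}$-formula $\phi(v)$ (``$v$ has a unique immediate predecessor but a pure dense right-neighborhood'') whose solution set is exactly the 1-points $\{(\ell,1):\ell\in L\}$, a copy of $L$; since $\phi$ is first-order, any isomorphism $f(L_1)\cong f(L_2)$ carries 1-points to 1-points and immediately induces $L_1\cong L_2$. You instead recover $L$ as an \emph{order-invariant quotient}: you declare $x\sim y$ iff they lie in a common convex copy of $X$, show the classes are precisely the fibers $\{\ell\}\times X$, and pass to the quotient. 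Both exploit the rigidity of $X$; the paper's formula is shorter and is designed to be reused in the very next lemma (where a family of such formulas handles the tail argument), while your quotient approach is more self-contained but requires the casework you sketch. Your edge case (adjacent $(\ell_1,3)$ and $(\ell_1^+,0)$) does resolve cleanly, and even more quickly than your plan suggests: $X$'s maximal chains of consecutive points have length exactly $2$, whereas any convex $I\cong X$ properly containing $\{(\ell_1,3),(\ell_1^+,0)\}$ must pick up $(\ell_1,2)$ or $(\ell_1^+,1)$ and hence contains a $3$-chain, a contradiction. The same ``maximal chain length $\le 2$'' invariant, together with your observation that $X$ has a unique maximal dense convex piece, disposes of all the sub-cases, so the plan is sound though noticeably longer than the paper's one-paragraph proof.
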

\begin{proof}
	The left-to-right direction is obvious.  For the right-to-left direction, observe that the set $\{(x,1)\}\subset f(L)$ of ``1-points'' is uniformly definable by the formula which expresses ``there is a unique predecessor, but there is an interval to the right which is pure dense.''  Further, this is order-isomorphic to $L$ itself under the map $(x,1)\mapsto x$.  Therefore, if $f(L_1)\cong f(L_2)$, then the ``1-points'' of $f(L_1)$ are isomorphic to the ``1-points'' of $f(L_2)$, so $L_1\cong L_2$.
\end{proof}

Next, define $g:\LO\to\LO$ by $L\mapsto \omega\times \left(L\cup\{\infty\}\right)$, where $\infty$ is some point not in $L$ which is above every point in $L$.  That is, $g$ stacks up $\omega$ copies of $L$, with a separating ``$\infty$-point'' between each one; in particular each $\infty$-point has an immediate ``next'' $\infty$-point.  We will show that these $\infty$-points are (eventually) definable, even on tails.  Therefore, if $g(f(L_1))$ and $g(f(L_2))$ are isomorphic on a tail, then we can match up consecutive $\infty$-points between the tails, and get an isomorphism between $f(L_1)$ and $f(L_2)$.

\begin{lemma}
	For any linear orders $L_1$ and $L_2$, the following are equivalent:
	
	\begin{enumerate}
		\item $L_1\cong L_2$,
		\item $g(f(L_1))$ and $g(f(L_2))$ are isomorphic, and
		\item $g(f(L_1))$ and $g(f(L_2))$ are isomorphic on a tail.
	\end{enumerate}
\end{lemma}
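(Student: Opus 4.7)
The implications $(1)\Rightarrow(2)\Rightarrow(3)$ are immediate: $g\circ f$ is functorial on linear orders, and any isomorphism restricts to an isomorphism of tails. All the content lies in $(3)\Rightarrow(1)$.

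My plan for $(3)\Rightarrow(1)$ is as follows. Given an order isomorphism $\phi\colon E_1\to E_2$ between tails $E_i$ of $g(f(L_i))$, I will exhibit an intrinsic order-theoretic property $\Phi(p)$ that picks out the ``$\infty$-points'' of $g(f(L))$ (the points of the form $(n,\infty)$) up to at most one exception per tail. Since $\phi$ preserves $\Phi$, it then matches consecutive $\infty$-points on the two sides bijectively; and between any two consecutive $\infty$-points lies exactly one full copy of $f(L_i)$, so $\phi$ restricts to an isomorphism $f(L_1)\cong f(L_2)$. The previous lemma then gives $L_1\cong L_2$.

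To build $\Phi$, I would exploit the rigid internal structure of $X=\{0\}\cup[1,2]_{\b Q}\cup\{3\}$. First, characterize a \emph{1-point} intrinsically as a point $p$ with a unique immediate predecessor such that, for some $q>p$, the open interval $(p,q)$ is a dense linear order without endpoints (``pure dense''); characterize a \emph{2-point} dually on the left. In $g(f(L))$ these pick out exactly the points $(n,(\ell,1))$ and $(n,(\ell,2))$. From these, define a \emph{0-point} as one whose immediate successor is a 1-point (exactly $(n,(\ell,0))$) and a \emph{3-point} as one whose immediate predecessor is a 2-point (exactly $(n,(\ell,3))$). Finally, let $\Phi(p)$ assert that $p$ is not a 0-point, and that either $p$ has an immediate predecessor which is a 3-point or $p$ has no immediate predecessor and is a limit from below of 3-points. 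A direct case analysis then confirms that $\Phi$ holds precisely at the points $(n,\infty)$: every $\infty$-point satisfies the second clause (via a 3-point predecessor when $L$ has a maximum, or as a limit from below of the $(n,(\ell,3))$ otherwise), and the only other candidates are the 0-points, which fail the first clause, including the delicate block-boundary case $(n{+}1,(\min L,0))$ whose immediate predecessor is the $\infty$-point $(n,\infty)$ rather than a 3-point.

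With $\Phi$ in hand, in any tail $E$ of $g(f(L))$ the formula correctly identifies all $\infty$-points of $E$ except possibly the one sitting in the truncated initial block (whose key 3-points may be cut off). Shrinking the tails slightly, $\phi$ matches consecutive $\infty$-points $p<p'$ in $E_1$ to consecutive $\infty$-points in $E_2$; the intervals $(p,p')$ and $(\phi(p),\phi(p'))$ are then isomorphic to $f(L_1)$ and $f(L_2)$ respectively, delivering $f(L_1)\cong f(L_2)$. The main obstacle is the case analysis verifying $\Phi$: one must check uniformly over all $L$---with or without endpoints---that every $\infty$-point satisfies $\Phi$ and no non-$\infty$-point does, with the most subtle case being the block-boundary 0-points whose predecessor structure depends sharply on whether $L$ has a minimum.
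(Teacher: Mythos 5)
Your proof is correct and takes essentially the same approach as the paper: both isolate the $\infty$-points by an order-theoretic formula that is valid above the first $\infty$-point of any tail, use the tail-isomorphism to match consecutive $\infty$-points, and thereby recover $f(L_1)\cong f(L_2)$ so the previous lemma finishes. The only difference is cosmetic---the paper's formula is the negation of the defining conditions for $0$-, $1$-, $2$-, $3$-, and pure-dense points, while yours positively characterizes $\infty$-points via $3$-point predecessors or limits of $3$-points; both rest on the same intrinsic definitions of the point classes, and your case analysis (including the block-boundary $0$-point whose immediate predecessor is an $\infty$-point) is correct.
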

\begin{proof}
	Two of the implications are obvious; it remains to show that if $g(f(L_1))$ and $g(f(L_2))$ are isomorphic on a tail, then $L_1\cong L_2$.  We will need a claim:
	
	\begin{claim}
		Let $L$ be any countable linear order.  There is a $\{<\}$-formula $\phi(v)$ such that for any tail $E$ of $g(f(L))$, there is a point $b\in E$ such that, $\phi(E)\cap (b,\infty)$ is exactly the set of ``$\infty$-points'' above $b$.
	\end{claim}
	\begin{claimproof}
		Let $E=[a,\infty)$ be some tail of $g(f(L))$.  Let $b$ be the first $\infty$-point satisfying $b>a$.  On $[b,\infty)$, every point which is \emph{not} an $\infty$-point has a neighborhood which is isomorphic to $f(L)$; therefore, any formula which gave its ``class'' before -- as a $0$-point, a $1$-point, a $2$-point, a $3$-point, or a ``pure dense'' point -- will still apply here.  More precisely:
		
		The pure dense points are exactly those satisfying the formula stating ``there is an open neighborhood around $v$ which is pure dense.''  The 1-points are exactly those stating ``$v$ is not pure dense, but there is a right-neighborhood which consists entirely of pure dense points,'' and the 2-points are defined symmetrically to the 1-points.  The 0-points are exactly those stating ``$v$ has an immediate successor which is a 1-point,'' and the 3-points are defined symmetrically to the 0-points.
		
		Let $c$ be any $\infty$-point above $b$.  Then every left-neighborhood $c$ contains infinitely many $0$-points, and thus is neither pure dense nor empty, so $c$ does not satisfy the defining formulas for pure dense points, 2-points, or 3-points.  If $L$ has no first element, then the right neighborhoods of $c$ will have the same properties.  Otherwise, if $L$ does have a first element, then the immediate successor of $c$ will be a 0-point.  Either way, $c$ does not satisfy the defining formulas of 0-points or 1-points.
		
		So let $\phi(v)$ be the negation of all the above defining formulas.  Then for all $x>b$, $\phi$ holds on $x$ if and only if $x$ is an $\infty$-point.  $\phi(v)$ is defined independent of everything, completing the proof.
	\end{claimproof}
	
	With this in mind, suppose $g(f(L_1))$ and $g(f(L_2))$ are isomorphic on a tail, say $[a_1,\infty)\iso [a_2,\infty)$.  Fix an isomorphism $\sigma:[a_1,\infty)\to [a_2,\infty)$.  Let $b_1\in g(f(L_1))$ and $b_2\in g(f(L_2))$ be as in the claim.  Since $\sigma$ is an order-isomorphism, it preserves $\phi$.  Let $c>\max(b_1,\sigma^{-1}(b_2))$ be some $\infty$-point, and let $c'$ be the next $\infty$-point after $c$.  Then the interval $(c,c')$ is order-isomorphic to $f(L_1)$.  Also, $\sigma(c)$ and $\sigma(c')$ are consecutive $\infty$-points in $g(f(L_2))$ by construction, so $(\sigma(c),\sigma(c'))$ is order-isomorphic to $f(L_2)$.  Since $\sigma$ is an isomorphism $(c,c')\to (\sigma(c),\sigma(c'))$, this shows $f(L_1)\iso f(L_2)$, so $L_1\cong L_2$, completing the proof.
\end{proof}

Define $\b T$ to be $\{g(f(L)):L\in \LO\}$ as a subclass of $\LO$.  Since $g\circ f$ is clearly Borel, we have shown the following:

\begin{lemma}
	$\b T$ is a subclass of $\LO$ which is Borel complete and on which isomorphism and tail isomorphism coincide.
\end{lemma}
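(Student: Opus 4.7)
The plan is to package the two preceding lemmas into the claimed statement, with essentially no new work required. By construction $\b T = \{g(f(L)) : L \in \LO\}$ is a subclass of $\LO$, so only the two substantive claims remain: Borel completeness of $(\b T, \iso)$, and the coincidence of isomorphism with tail isomorphism on $\b T$.

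For Borel completeness, I would exhibit the map $\Phi := g \circ f : \LO \to \b T$ as a Borel reduction of $(\LO,\iso)$ into $(\b T, \iso)$. The map $f$ replaces each point of an input order with a fixed countable order $X$ under the lexicographic order, and $g$ stacks $\omega$ copies of its input (with a separating $\infty$-point in between), so both operations are given uniformly and computably from the input relation on $\omega$; this makes $\Phi$ Borel as a function $\Mod(\omega, \{<\}) \to \Mod(\omega, \{<\})$. The reduction property $L_1 \iso L_2 \Leftrightarrow \Phi(L_1) \iso \Phi(L_2)$ is precisely the equivalence (1)$\Leftrightarrow$(2) of the preceding lemma. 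Since $(\LO, \iso)$ is Borel complete by Theorem 3 of \cite{FriedmanStanleyBC}, transitivity of Borel reducibility yields that $(\b T, \iso)$ is Borel complete as well.

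For the coincidence of isomorphism and tail isomorphism on $\b T$, let $M_1, M_2 \in \b T$, so $M_i = g(f(L_i))$ for some $L_i \in \LO$. One direction is trivial: any isomorphism restricts to an isomorphism on tails. For the converse, if $M_1$ and $M_2$ are tail-isomorphic, then by the implication (3)$\Rightarrow$(1) of the preceding lemma we get $L_1 \iso L_2$, and then (1)$\Rightarrow$(2) gives $M_1 \iso M_2$. Since both substantive bullets reduce to facts already established, there is no real obstacle; the only thing to verify carefully is that the map $\Phi$ is genuinely Borel when viewed as operating on presentations with universe $\omega$, which is routine given the explicit combinatorial descriptions of $f$ and $g$.
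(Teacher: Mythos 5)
Your proposal is correct and matches the paper's argument: the paper simply defines $\b T=\{g(f(L)):L\in\LO\}$, observes that $g\circ f$ is Borel, and cites the preceding lemma for the (1)$\Leftrightarrow$(2)$\Leftrightarrow$(3) equivalences; your write-up spells out those same steps in slightly more detail.
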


\subsection{Nonsimple Types Prove Borel Completeness}

In this section, our goal is to show that if $T$ admits a nonsimple type, then $T$ is Borel complete.  We have already shown that if $T$ admits a nonsimple type, then $T$ admits a nonsimple type over $\emptyset$.  So we have two cases, in line with our previous work: either this type is nonisolated or atomic.  The first case is straightforward:

\begin{lemma}\label{nonisolatedGivesBC}
	If $T$ admits a nonsimple nonisolated type over the empty set, then $T$ is Borel complete.
\end{lemma}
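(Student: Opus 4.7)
The plan is to produce an explicit Borel reduction $(\LO,\iso)\leq_B (\Mod(\omega,T),\iso)$, carrying out the strategy foreshadowed at the beginning of Section~\ref{nonsimpleTypeSection}. By Lemma~\ref{nonsimpleNonisolatedGivesFaithful}, I may fix a faithful nonsimple type $p\in S_1(\emptyset)$. Given a countable linear order $L$ with universe $\omega$, the idea is to build a model $\m M_L\models T$ in which the Archimedean ladder $p(\m M_L)/\sim$ is canonically order-isomorphic to $L$, using $p$ to ``encode'' $L$ into the model.

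Concretely, I would pick (inside the monster) a sequence $C_L=(c_\ell)_{\ell\in L}$ of realizations of $p$ indexed and ordered by $L$, all pairwise $\sim$-inequivalent. Such a sequence exists by compactness, since the partial type asserting ``ascending, pairwise Archimedean-inequivalent realizations of $p$'' is consistent. Then let $\m M_L=\Pr(C_L)$ be the prime/constructible model over $C_L$, coded as a structure with universe $\omega$ through a uniform effective construction of the prime model from the diagram of $C_L$. Borelness of $L\mapsto \m M_L$ is a routine consequence of the effective nature of the constructible-model construction over $T$.

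To see that this is a reduction, I verify two things. First, the ladder $p(\m M_L)/\!\sim$ is order-isomorphic to $L$: since $\m M_L$ is constructible over $C_L$, every realization of $p$ in $\m M_L$ lies in $\cl^p(C_L)$, and \emph{faithfulness} of $p$ forces each such realization to be $\sim$-equivalent to some $c_\ell$; conversely the $c_\ell$ were chosen pairwise $\sim$-inequivalent, so they represent pairwise distinct classes and the map $c_\ell\mapsto \ell$ descends to the required order-isomorphism. Because $p$ is over $\emptyset$, the ladder is an isomorphism invariant of $\m M_L$, so $\m M_{L_1}\iso \m M_{L_2}$ implies $L_1\iso L_2$. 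For the converse, given an order-isomorphism $\sigma:L_1\to L_2$, the bijection $c_\ell\mapsto c_{\sigma(\ell)}$ is partial elementary; this uses the completeness of the ``maximally Archimedean-spread'' type (the same phenomenon invoked in the proof of Lemma~\ref{noncutsAreFaithful}, namely that any tuple of realizations of $p$ with strictly ascending Archimedean classes realizes a single complete type over $\emptyset$ determined by its order type). By the uniqueness of prime models over a set, this partial elementary map extends to an isomorphism $\Pr(C_{L_1})\to \Pr(C_{L_2})$, i.e.\ $\m M_{L_1}\iso \m M_{L_2}$.

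I expect the main obstacle to be bookkeeping rather than conceptual: one has to verify that $L\mapsto \m M_L$, with $\m M_L$ presented on universe $\omega$, is genuinely Borel as a function $\LO\to \Mod(\omega,T)$. The model-theoretic content reduces entirely to (i) faithfulness of $p$, controlling the ladder of $\m M_L$, and (ii) the completeness of Archimedean-spread types, which furnishes the partial elementary maps needed to promote order-isomorphisms of linear orders to isomorphisms of prime models.
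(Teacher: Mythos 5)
Your proof is correct and takes essentially the same route as the paper: fix a faithful type $p$ over $\emptyset$ via Lemma~\ref{nonsimpleNonisolatedGivesFaithful}, choose realizations of $p$ whose Archimedean classes are ordered by $L$, take the prime model over them, and observe that the Archimedean ladder recovers $L$. The one step glossed over is your assertion that $p(\m M_L)\subseteq\cl^p(C_L)$; this is not an automatic consequence of constructibility (which only gives that $\tp(c/C_L)$ is isolated, possibly by a nonalgebraic atomic interval), but it does hold because faithfulness forces $p$ to be $1$-nonsimple, so $\cl^p(C_L)$ is dense (Lemma~\ref{bigNMakesClosureDense}) and no $C_L$-atomic interval can lie inside $p$ -- this is exactly the second case of the paper's surjectivity argument, so you should spell it out rather than fold it into ``constructible.''
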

\begin{proof}
	If $T$ admits a nonsimple, nonisolated type over $\emptyset$, then by Lemma~\ref{nonsimpleNonisolatedGivesFaithful}, $T$ also admits a faithful nonsimple type $p$ over the empty set.  Fix such a $p$.
	
	Our main concern is to show that given any countable linear order $L$, there is a countable model $\m M_L\models T$ such that $p(\m M_L)/\sim$ is isomorphic to $L$ as a linear order.  A close examination of the proof will show that this can be made a Borel function from $\LO$ to $\Mod(T)$.  Since isomorphism of models implies isomorphism of the ladders, this establishes a Borel reduction from the Borel complete relation $(\LO,\cong)$, establishing Borel completeness.
	
	So fix a countable linear order $L$, and let $X_L=\{a_\alpha:\alpha\in L\}$ be a set of realizations of $p$, such that if $\alpha<\beta$ in $L$, then $[a_\alpha]<[a_\beta]$.  Let $\m M_L$ be $\Pr(X_L)$.  Define the function $f:L\to p(\m M_L)/\sim$ by $f(\alpha)=[a_\alpha]$.  By construction of $X_L$, $f$ is injective and order-preserving.  So it only remains to show surjectivity.
	
	So let $c\in p(\m M_L)$.  Since $\m M_L$ is atomic over $X_L$, $\tp(c/X_L)$ is either algebraic or an atomic interval.  If $c\in\cl(X_L)$, then for some sequence $[a_{\alpha_1}]<\cdots<[a_{\alpha_n}]$, $c\in\cl(\o a)$.  By faithfulness, this means $c\sim a_{\alpha_i}$ for some $i$, so $[c]=f(\alpha_i)$.
	
	Alternately, suppose $\tp(c/X_L)$ is an atomic interval.  Let $(a,b)$ be an $X_L$-atomic interval in $p$ where $a<c<b$.  By faithfulness, $p$ is 1-nonsimple, so there is an $a'\in\cl^p(a)$ where $a'>a$.  Since $a\in\cl(X_L)$, we also have $a'\in\cl(X_L)$, so by $X_L$-atomicity of $(a,b)$, we have $a'\geq b$.  Clearly $a\sim a'$, so by convexity, $a\sim c$.  By the previous paragraph, $a\sim x_\alpha$ for some $\alpha$, so by transitivity, $c\sim x_\alpha$ as well, so $[c]=f(\alpha)$.
	
	Therefore $f$ is surjective, so is an isomorphism.  Thus $p(\m M_L)/\sim$ is isomorphic to $L$.
\end{proof}

\begin{lemma}
	If $T$ admits a nonsimple isolated type over the empty set, then $T$ is Borel complete.
\end{lemma}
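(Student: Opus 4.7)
The plan is to follow Lemma~\ref{nonisolatedGivesBC}, sidestepping the absence of a faithful type over $\emptyset$ by fixing parameters $A$ that convert $p$ into a faithful non-cut over $A$, encoding linear orders from the class $\b T$ built in the previous subsection, and using the canonical tail to recover an isomorphism invariant of the resulting model.

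First, let $p\in S_1(\emptyset)$ be the nonsimple atomic type, let $n$ be minimal with $p$ being $n$-nonsimple, and fix an $\emptyset$-definable $f : p^n \to p$ with $f(\bar{x}) > x_n$ on ascending tuples. Fix once and for all parameters $A=\{a_1<\cdots<a_n\}$ realizing $p$ in the monster with $[a_1]<\cdots<[a_n]$, and consider the non-cut $q(x)=p(x)\cup\{x>\cl^p(A)\}\in S_1(A)$. The first claim is that $q$ is $1$-nonsimple over $A$, and hence (by Lemma~\ref{noncutsAreFaithful} applied over parameters) faithful over $A$: for any $b\models q$, the element $f(a_1,\ldots,a_{n-1},b)$ is $A$-definable from $b$, lies in $p$, and strictly exceeds $b$, so it witnesses an element of $\cl^q_A(\{b\})\setminus\{b\}$.

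For each $L\in\b T$, pick in the monster a sequence $X_L=\{b_\alpha:\alpha\in L\}$ of realizations of $q$ with $[b_\alpha]_{\sim_A}<[b_\beta]_{\sim_A}$ whenever $\alpha<\beta$ (consistent by saturation and Lemma~\ref{bigNMakesClosureDense}), and set $\m M_L=\Pr(A\cup X_L)$. Mirroring Lemma~\ref{nonisolatedGivesBC}, the map $\alpha\mapsto[b_\alpha]_{\sim_A}$ is claimed to be an order-isomorphism $L\to q(\m M_L)/\sim_A$. Injectivity and order-preservation hold by construction; for surjectivity, split cases on $c\in q(\m M_L)$. If $c$ is algebraic over $A\cup X_L$, then $c\in\cl^q_A(B)$ for some finite $B\subseteq X_L$, and faithfulness of $q$ gives $c\sim_A b_\alpha$ for some $\alpha$. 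Otherwise $c$ lies in an atomic interval $(d,e)$ over $A\cup B$ with $d,e\in\cl(A\cup B)\cup\{\pm\infty\}$; since $\cl^p(A)$ has no maximum by Lemma~\ref{bigNMakesClosureDense} and cannot meet the interior of an atomic interval over $A\cup B$, we must have $d>\cl^p(A)$, so $d\models q$. Applying $1$-nonsimplicity of $q$ over $A$ to $d$ produces $d'>d$ in $\cl^q_A(\{d\})\subseteq\cl(A\cup B)$; the atomicity of $(d,e)$ forces $d'\geq e$, so $c\in(d,d']$, and convexity of Archimedean classes yields $c\sim_A d$, which reduces to the algebraic case applied to $d$.

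The assignment $L\mapsto\m M_L$ can be realized as a Borel function $\b T\to\Mod(\omega,T)$ just as in the previous lemma; composing with the Borel complete reduction $L\mapsto g(f(L))\in\b T$ from the previous subsection yields a Borel function $\LO\to\Mod(\omega,T)$. If $L_1\iso L_2$ in $\b T$, then matching the $b_\alpha$ sequences while fixing $A$ induces an isomorphism $\m M_{L_1}\iso\m M_{L_2}$. Conversely, given $\sigma:\m M_{L_1}\iso\m M_{L_2}$, the set $A'=\sigma(A)$ is an $n$-element subset of $p(\m M_{L_2})$ in distinct Archimedean classes, and $\sigma$ identifies $q(\m M_{L_1})/\sim_A\iso L_1$ with the ladder of $p(\m M_{L_2})$ above $\cl^p(A')$ under $\sim_{A'}$; the canonical tail then forces agreement with $q(\m M_{L_2})/\sim_A\iso L_2$ above $\cl^p(A\cup A')$, so $L_1$ and $L_2$ are tail-isomorphic in $\b T$ and therefore isomorphic. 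The main obstacle is conceptual: the parameters $A$ cannot be preserved by an isomorphism of models, and the canonical tail together with the tail-rigid class $\b T$ were engineered precisely to absorb this non-canonicity.
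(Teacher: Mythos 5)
Your overall architecture matches the paper's: fix a minimal $n$, use $n$ auxiliary realizations of $p$ to manufacture a nonsimple non-cut, encode orders from the tail-rigid class $\b T$ as ladders over those parameters, and appeal to the canonical tail to cope with the fact that an abstract isomorphism $\sigma$ moves the parameter set. Your forward analysis (the surjectivity of $\alpha\mapsto[b_\alpha]_{\sim_A}$ via faithfulness of the non-cut, splitting into the algebraic and atomic-interval cases) is essentially the paper's argument and is fine.

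The gap is in the final sentence of the reverse direction. You assert that ``the canonical tail then forces agreement with $q(\m M_{L_2})/\sim_A\iso L_2$ above $\cl^p(A\cup A')$, so $L_1$ and $L_2$ are tail-isomorphic,'' but the canonical tail only tells you that the two equivalence relations $\sim_A$ and $\sim_{A'}$ \emph{coincide} on $\{c\in p(\m M_{L_2}):c>\cl^p(AA')\}$. It does not by itself tell you that this set is nonempty, nor that its quotient is a nonempty final segment of both $q_A(\m M_{L_2})/\sim_A\iso L_2$ and $q_{A'}(\m M_{L_2})/\sim_{A'}\iso L_1$. Establishing that is precisely the technical core of the paper's proof, and it is genuinely delicate when $p$ is $1$-nonsimple: the ``foreign'' parameters $A'=\sigma(A)$ can lie inside an $(A\cup X_{L_2})$-atomic interval of $p(\m M_{L_2})$, so $A'$ is not algebraic over $A\cup X_{L_2}$ and you cannot simply use compactness to put $\cl^p(AA')$ below some $b_\alpha$. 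The paper handles this by a case split (Claim for $p$ $1$-simple, Claim for $p$ $1$-nonsimple) and, in the $1$-nonsimple case, by a contradiction argument: if some $a$-definable $f$ sent the atomic interval cofinally high, then $\lim_{x\to R^-}f(x)=\infty$ would let $1$-nonsimplicity produce a definable point inside the interval, contradicting atomicity. Without some version of that argument your proof does not rule out that $\cl^p(AA')$ is cofinal in $p(\m M_{L_2})$, in which case the ``common tail'' would be empty and no conclusion about $L_1$ versus $L_2$ would follow. So the plan is correct, but as written the proof is missing its hardest step.
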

\begin{proof}
	We will produce a Borel reduction $(\b T,\iso)$ to $(\Mod(\omega,T),\iso)$.  With this in mind, fix a nonsimple, atomic type $p$.  Let $n$ be minimal such that $p$ is $n$-nonsimple.
	
	For any linear order $L$, define an order $L^*=\{1, \ldots, n\}\cup L$, where $1<2<\cdots<n$ and $n<\alpha$ for all $\alpha\in L$.  Let $X_L=\{x_\alpha:\alpha\in L^*\}$, where for all $\alpha\in L^*$, $x_\alpha$ realizes $p$ and $x_\alpha>\cl^p(\{x_\beta:\beta<\alpha\})$.  Finally, let $\m M_L$ be $\Pr(X_L)$, a prime model of $T$ over $X_L$.  Observe that the function $L\mapsto \m M_L$ can be made Borel.
	
	For any $n$-element set $B$ from $\m M_L$, let $p_B(x)$ be the nonsimple non-cut $p(x)\cup\{x>\cl^p(B)\}$.  Our primary goal in this proof is to show that for \emph{any} $n$-element set $B$ from $p(\m M_L)$, we recover a tail of $L$ in $p_B$.  That is, $p_B(\m M_L)/\sim_B$ is isomorphic on a tail to $L$.  As before, we must divide into two cases, based on whether $p$ is 1-nonsimple, because of the restrictions in Lemma~\ref{bigNMakesClosureDense}.
	
	\begin{claim}
		If $p$ is 1-simple, then for any set $B$ from $p(\m M_L)$ with $|B|=n$, $p_B(\m M_L)/\sim_B$ is isomorphic on a tail to $L$.
	\end{claim}
	\begin{claimproof}
		Let $A=\{1, \ldots, n\}$; then by construction of $\m M_L$ and the fact that non-cuts are faithful, $p_{\o a}(\m M_L)/\sim_{A}$ is isomorphic to $L$.  It is therefore sufficient to show that for any $B$, $p_{B}(\m M_L)/\sim_{B}$ and $p_{A}(\m M_L)/\sim_{A}$ are isomorphic on a tail.
		
		So fix an $n$-element set $B$ from $p(\m M_L)$.  Since $p$ is $1$-simple, by Lemma~\ref{bigNMakesClosureDense}, $\cl^p(X_L)$ is a dense linear order without endpoints.  Since $\Pr(X_L)$ is atomic over $X_L$, $p(\m M_L)$ \emph{is} $\cl^p(X_L)$.  So by compactness, there is a finite subset $L_0\subset L^*$ containing $\{1, \ldots, n\}$ such that $AB\subset \cl^p\left(\{x_\alpha:\alpha\in L_0\}\right)$. Let $X_0$ be the tail of $X_L$ above $X_{L_0}$; that is, the set of all $x_\alpha$ such that for all $\beta\in L_0$, $\alpha>\beta$.  Since $L$ has no largest element, $X_0$ is nonempty.  We will show it forms a common tail of $p_{A}(\m M_L)/\sim_{A}$ and $p_{B}(\m M_L)/\sim_{B}$.
		
		$X_0$ forms a tail of $p_A(\m M_L)/\sim_A$ under the function $x_\alpha\mapsto [x_\alpha]$, by the characterization of $p_A(\m M_L)/\sim_A$ at the beginning of this proof.  As for $p_B$, by construction of $L_0$, if $x_\alpha\in X_0$, then $x_\alpha$ realizes $p_B$.  Each of the $x_\alpha\in X_0$ is $\sim_{L_0}$-inequivalent by construction of $X_L$, so must be $\sim_B$-inequivalent as well; it only remains to show that the set $\{[x]:x\in X_0\}$ is right-closed in $p_B(\m M_L)/\sim_B$.
		
		So suppose $x_\alpha\in X_0$ and $c>x_\alpha$ realizes $p$.  By the characterization of $p_A(\m M_L)/\sim_A$, $c\sim_A x_\beta$ for some $\beta\geq\alpha$.  Since $c$ and $x_\beta$ are both greater than or equal to $x_\alpha$, which is above $\cl^p(\{x_\gamma:\gamma<\alpha\})\supset\cl^p(AB)$, we can use the canonical tail condition to conclude that $c\sim_B x_\beta$ as well.  Therefore, $[c]\in\{[x]:x\in X_0\}$, so $X_0$ forms a tail of $p_B(\m M_L)/\sim_B$.  This completes the proof.
	\end{claimproof}
	
	\begin{claim}
		If $p$ is 1-nonsimple, then for any set $B$ from $p(\m M_L)$ with $|B|=n$, $p_B(\m M_L)/\sim_B$ is isomorphic on a tail to $L$.
	\end{claim}
	\begin{claimproof}
		Let $a=x_1$. As before, we can conclude that $p_a(\m M_L)/\sim_a$ is isomorphic to $L$, and therefore that we need to show for every $b\in p(\m M_L)$, $p_b(\m M_L)/\sim_b$.  So, fix such a $b$.  Since $\tp(b/X_L)$ is atomic, either $b\in\cl(X_L)$, or $\tp(b/X_L)$ is generated by an atomic interval.
		
		If $b\in\cl(X_L)$, then the previous proof applies without change.  Therefore, assume $\tp(b/X_L)$ is an atomic interval $(L,R)$ where $L,R\in\cl(X_L)$.  Let $L_0$ be a finite subset of $X_L$ which contains $1$ and such that $L,R\in\cl(\{x_\alpha:\alpha\in L_0\})$.  Let $X_0$ be the elements of $X_L$ which are above $L_0$.  This is a right-closed subset of $L$, so it forms a tail of $p_a(\m M_L)/\sim_a$; it remains to show it forms a tail of $p_b(\m M_L)/\sim_b$.  As before, the function $x\mapsto [x]$ is a well-defined, order-preserving injection.  It remains to show surjectivity.
		
		So pick a $c$ from $p_b(\m M_L)$ such that for some $x_\alpha\in X_0$, $c>x_\alpha$.  For some $\beta\geq\alpha$, $c\sim_a x_\beta$; we want to show $c\sim_b x_\beta$ as well.  Since $p$ has a canocical tail, it is enough to show that $x_\alpha>\cl^p(ab)$, so \emph{suppose not}.  Then there is an $a$-definable function $f(x)$ where $f(b)\geq x_\alpha$.  Then $f(x)$ is defined and strictly monotone on the atomic interval $(L,R)$; we may assume strict increasing.  Since $x_\alpha>\cl^p(X_0)$, it must be that $\lim_{x\to R^-}f(x)=\infty$.  We will use this limit to prove that $(L,R)$ is not $X_0$-atomic, yielding a contradiction.
		
		The image of $(L,R)$ under the function $f$ must also be an interval, since $f$ is continuous and strictly increasing, and so by the argument above, it must be of the form $(C,\infty)$ for some $C\in\cl^p(X_0)$.  By 1-nonsimplicity, there is a $C'>C$ in $p$ which is $C$-definable; since the interval is right-infinite, $C'\in\im(f)$.  But then $f^{-1}(C')\in (L,R)$ and is $X_0$-definable, a contradiction of atomicity of $(L,R)$.
	\end{claimproof}
	
	Having performed these two claims, the result follows immediately.  Let $L_1,L_2\in\b T$; we want to show $L_1\cong L_2$ if and only if $\m M_{L_1}\iso\m M_{L_2}$.  The left-to-right direction is obvious, so suppose $\m M_{L_1}\iso \m M_{L_2}$ and let $\sigma:M_{L_1}\to \m M_{L_2}$ be an isomorphism.
	
	Let $A=\{x_1, \ldots, x_n\}\subset p(\m M_{L_1})$ be the ``intended'' set of parameters for $\m M_{L_1}$.  Then $\sigma$ induces an order isomorphism from $p_A(\m M_{L_1})/\sim_A$ to $p_{\sigma(A)}(\m M_{L_2})/\sim_{\sigma(A)}$.  The former is isomorphic to $L_1$, and the latter is isomorphic (on a tail) to $L_2$.  Thus $L_1$ is isomorphic to a tail of $L_2$.  Since both orders are from $\b T$, this shows $L_1\iso L_2$, completing the proof.
\end{proof}

Combining this with results from the above, we have proved the main theorem of the section.

\begin{thm}
	Let $T$ be a countable o-minimal theory.  If $T$ admits a nonsimple type, then $T$ is Borel complete.
\end{thm}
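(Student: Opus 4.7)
The plan is straightforward: this theorem is a synthesis of the two preceding lemmas via the parameter-reduction corollary, so the proof is essentially a case-split wrap-up. First I would invoke Corollary~\ref{parametersAreUnneeded} to replace the given nonsimple type (which might live over some parameter set) with a nonsimple type $p \in S_1(\emptyset)$. This step is not cosmetic: Borel reducibility hinges on the Archimedean ladder of $p$ being an isomorphism invariant of each $\m M \in \Mod(\omega,T)$, and that requires $p$ to be $\emptyset$-definable rather than defined over a chosen parameter tuple that varies with the model.

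Next I would split on the kind of $p$, using Marker's trichotomy of complete nonalgebraic 1-types in o-minimal theories. If $p$ is nonisolated (a cut or a non-cut), then Lemma~\ref{nonisolatedGivesBC} directly furnishes a Borel reduction $(\LO, \iso) \leq_B (\Mod(\omega,T), \iso)$ via the construction $L \mapsto \Pr(X_L)$, where $X_L$ is a set of $\sim$-inequivalent realizations of a faithful nonsimple type extracted from $p$ by Lemma~\ref{nonsimpleNonisolatedGivesFaithful}. If instead $p$ is isolated (atomic), then the immediately preceding lemma yields a Borel reduction $(\b T, \iso) \leq_B (\Mod(\omega,T), \iso)$ through the canonical-tail construction, together with the fact that $(\b T, \iso)$ is Borel complete. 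Since $(\LO, \iso)$ is Borel complete by Friedman--Stanley, either case produces a reduction from a Borel complete source into $(\Mod(\omega,T), \iso)$.

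There is essentially no obstacle at this stage: all substantive work has already been done in the preceding subsections (producing faithful types from unfaithful cuts, establishing the canonical tail property for atomic nonsimple types, and constructing the Borel complete subclass $\b T \subseteq \LO$ on which tail isomorphism coincides with isomorphism). The only thing a careful write-up must confirm is that the trichotomy nonisolated-versus-atomic truly exhausts all possibilities for $p$, which is immediate, and that the reductions produced are Borel, which is visible from the explicit uniform nature of the constructions $L \mapsto \Pr(X_L)$.
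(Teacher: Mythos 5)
Your proposal is correct and matches the paper's own (implicit) argument: the theorem is a direct synthesis of Corollary~\ref{parametersAreUnneeded} (to move the nonsimple type over $\emptyset$), the dichotomy nonisolated versus atomic, Lemma~\ref{nonisolatedGivesBC} for the nonisolated case, and the atomic/canonical-tail lemma together with the Borel complete subclass $\b T \subseteq \LO$ for the isolated case. The paper itself dispatches the theorem with the single sentence ``Combining this with results from the above, we have proved the main theorem of the section,'' which is exactly the wrap-up you describe.
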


\section{Corollaries}\label{CorollarySection}

Most interesting o-minimal theories admit nonsimple types, and are therefore Borel complete.  Our aim for this section is to establish two broad classes of o-minimal theories which are Borel complete; these yield very general sufficient conditions for Borel completeness.

The first such class of such theories is the class of nontrivial theories - those where it is possible for a point to be definable over a set without being definable over any single point inside that set.  For example, any theory with an infinite definable group would satisfy this property.  We show that any nontrivial o-minimal theory is Borel complete by using nontriviality to construct a nonsimple type over a finite set, then appealing to Theorem~\ref{MainTheorem}.

The other broad class is the discretely o-minimal theories, or even those which have a significant discrete part.  Although it was shown in \cite{discreteOMinimal} that the discrete part of an o-minimal theory is completely trivial (in the above sense), the successor function still provides an interesting (unary) function on the structure, which is enough to construct a nonsimple type and show Borel completeness.

\subsection{Nontrivial Theories}\label{TrivialitySection}

Recall that a theory $T$ \emph{nontrivial} if there is some point $b$ and some set $A$ where $b\in\cl(A)$ but $b\not\in\bigcup_{a\in A}\cl(a)$.  We use exchange and nontriviality to produce a nonsimple type over finitely many parameters, and therefore conclude with Borel completeness.

\begin{thm}
	If $T$ is a nontrivial o-minimal theory then $T$ is Borel complete.
\end{thm}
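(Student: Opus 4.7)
The plan is to apply Theorem~\ref{MainTheorem} together with Corollary~\ref{parametersAreUnneeded}: it is enough to produce a nonsimple $1$-type over some finite set of parameters, since the corollary lifts this to a nonsimple type over $\emptyset$ and part~(3) of the main theorem then gives Borel completeness. The whole task is therefore to convert the nontriviality witness into such a nonsimple type using exchange.

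First I would pick a nontriviality witness of minimal size: elements $b$ and $A = \{a_1, \ldots, a_n\}$ with $b \in \cl(A)$, $b \notin \cl(a)$ for every $a \in A$, and $n = |A|$ minimal. Minimality forces $n \geq 2$, each $a_i \notin \cl(\emptyset)$, and $b \notin \cl(A')$ for every proper $A' \subsetneq A$. Absorbing $a_3, \ldots, a_n$ into the language as constants reduces us to $n = 2$, in which case $\{a_1, a_2, b\}$ forms an algebraic triangle over $\emptyset$: any two of the three are algebraically independent, and each of the three lies in the closure of the other two, the last assertions following from exchange applied to $b \in \cl(a_1, a_2) \setminus \cl(a_i)$.

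The core construction takes place over $C := \{a_1\}$. Both $a_2$ and $b$ lie in $\cl(C, a_2) = \cl(C, b)$ and are non-algebraic over $C$, and the $\emptyset$-definable function $f$ with $b = f(a_1, a_2)$ restricts to a $C$-definable bijection between $q := \tp(a_2/C)$ and $r := \tp(b/C)$. If $q = r$, then $b$ witnesses $1$-nonsimplicity of $q$ directly, since $b \in \cl_C(a_2) \cap q$ and $b \neq a_2$ (because $b \notin \cl(a_2)$). If $q \neq r$, I would exploit the threefold symmetry of the triangle under exchange together with cell decomposition applied to $f$ near $(a_1, a_2)$: by shuffling which of $a_1, a_2, b$ plays the role of the parameter and which plays the role of the derived element, one of the resulting bijectively-related pairs of $1$-types should coincide, or else the continuous monotone restriction of $f$ to a cell provides a second realization of $q$ inside $\cl(a_1, a_2)$ directly.

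The main obstacle will be this asymmetric $q \neq r$ case, since exchange alone only produces bijections between possibly distinct types, while nonsimplicity demands a nontrivial self-map (equivalently, a second realization) of a single type; the extra flexibility required to close this gap comes from o-minimal cell decomposition and the continuity-monotonicity of $f$. Once any nonsimple type over the finite set $C$ has been exhibited, Corollary~\ref{parametersAreUnneeded} transfers it to $\emptyset$ and part~(3) of Theorem~\ref{MainTheorem} concludes that $T$ is Borel complete.
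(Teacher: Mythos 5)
Your opening strategy — produce a nonsimple type over finitely many parameters, lift it to $\emptyset$ via Corollary~\ref{parametersAreUnneeded}, then apply Theorem~\ref{MainTheorem}(3) — matches the paper exactly, as does the setup of a minimal nontriviality witness $A=\{a_1,\dots,a_n\}$ with $b\in\cl(A)\setminus\bigcup_i\cl(a_i)$. The $q=r$ branch of your argument is also fine: with $q=\tp(a_2/a_1)$ and $r=\tp(b/a_1)$, the element $b$ is a realization of $q$ in $\cl_{a_1}(a_2)\setminus\{a_2\}$, giving $1$-nonsimplicity over $\{a_1\}$.

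The gap is the case $q\neq r$, and you have not closed it. Once you fix the single parameter $a_1$, the function $f(a_1,\cdot)$ is merely a bijection between the two \emph{distinct} $1$-types $q$ and $r$; a bijection between distinct types gives no self-map of either type and hence no nonsimplicity witness. Your proposed remedies do not work. ``Shuffling which of $a_1,a_2,b$ is the parameter'' could in principle have all three pairs of residual types distinct, so you cannot count on landing back in the $q=r$ case by relabelling. And ``the continuous monotone restriction of $f$ to a cell provides a second realization of $q$ inside $\cl(a_1,a_2)$'' is an assertion, not an argument: $\cl(a_1,a_2)$ of course contains $b$, but $b$ realizes $r\neq q$, and there is no general reason for $\cl(a_1,a_2)$ to contain a second realization of $q$. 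If it did, $q$ would already be $1$-nonsimple over $a_1$ and there would be nothing to prove. The structural obstruction is that you are trying to get by with a single extra parameter. In the situation where $\tp(a_1)$, $\tp(a_2)$, $\tp(b)$ are three genuinely distinct types with no $\emptyset$-definable bijections between them, the paper's proof has to fix \emph{two} realizations $c_1<c_2$ of one of the types: this yields two \emph{distinct} definable bijections $h_{c_1},h_{c_2}\colon p\to q$, and $h_{c_1}^{-1}\circ h_{c_2}$ is the nontrivial self-map of $p$ that witnesses nonsimplicity over $\{c_1,c_2\}$ together with the absorbed constants. There is also an intermediate case (two of the three types agree, say $p=q\neq r$) where one parameter suffices, but it must be $b$ rather than $a_1$: $f(\cdot,\cdot)$ then gives a $b$-definable self-map of $p$ by sending $x$ to the unique $y>x$ with $f(x,y)=b$. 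Your fix-$a_1$ setup never produces either of these constructions, so the $q\neq r$ branch remains open.
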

\begin{proof}
	Suppose $T$ is nontrivial.  We will produce a nonsimple type $p(x)$ over finitely many parameters, establishing Borel completeness by Theorem~\ref{MainTheorem}.  By nontriviality, there is a set $A$ and $b\in\cl(A)$ where $b\not\in\cl(a)$ for any $a\in A$.  We may assume $A$ is finite, and that $A$ has minimal cardinality among all ``nontrivial sets.''  Enumerate $A$ in an ascending way as $a_1<\cdots<a_n$, remarking that $n\geq 2$.  The set $B=\{a_3, \ldots, a_n\}$ will be the first part of our parameter set.
	
	Then $b\in\cl_B(a_1, a_2)$ but $b\not\in\cl_B(a_i)$ for either $i$.  Let $p(x)=\tp(a_1/B)$, $q(x)=\tp(a_2/B)$, and $r(x)=\tp(b/B)$.  Each of these types is nonalgebraic.  Suppose (for example) that $\cl^p_B(a_2)$ is nonempty; then we may replace $a_2$ with some realization of $p(x)$, bidefinable with $a_2$ over $B$, without affecting the dependence relation $b\in\cl(a_1a_2)\setminus(\cl(a_1)\cup\cl(a_2))$.  Using this idea and exchange, we may assume the following cases are exhaustive:
	
	{\bf First:} One of the types $p$, $q$, or $r$ is a nonsimple $B$-type, in which case the theorem is proved.
	
	{\bf Second:} $p=q=r$; then $p$ is a 2-nonsimple $B$-type under whatever function takes the pair $(a_1,a_2)$ to $b$.
	
	{\bf Third:} $p=q$ and $p\not=r$.  Then there is a $B$-definable binary function $f:p^2\to r$ taking ascending pairs from $p$ into single elements of $r$.  Then for any $a$ modeling $p$, there is a unique extension of $r$ to a $Ba$-type (or else we're actually in the previous case) and the function $f(a,y)$ must be a bijection from the complete $Ba$-type $p(x)\cup\{x>a\}$ to $r$.  But then the function $g:p\to p$ where $g(x)$ is the unique $y>x$ such that $f(x,y)=b$ is well-defined and nonsimple, so that $p(x)$ is a complete, nonsimple $Bb$-type.
	
	{\bf Fourth:} $p$, $q$, and $r$ are all distinct.  Let $f$ be such that $f(a_1,a_2)=b$.  Then for any $c$ modeling $r$, the types $p$ and $q$ are completely described over $Bc$ and the function $f(a,y)$ is a bijection from $q$ to $r$.  So for any $c$ realizing $r$, we have a bijection $h_c:p\to q$ taking $x$ to the unique $y$ where $f(x,y)=c$.
	
	Therefore, fix $c_1<c_2$ realizing $r$.  If $p$ or $q$ does \emph{not} extend uniquely to a complete $Ac_1c_2$-type, then we have a function $r^2\to p$ or $r^2\to q$, and $T$ is Borel complete by a previous case.  But otherwise, $p$ and $q$ are complete over $Ac_1c_2$, and the functions $h_{c_1}$ and $h_{c_2}$ are \emph{distinct} bijections $p\to q$.  Therefore $h_{c_1}^{-1}\circ h_{c_2}$ is a nontrivial bijection $p\to p$, so $p$ is a nonsimple type over $Bc_1c_2$.
\end{proof}

Therefore $T$ is Borel complete.

\subsection{Non-Dense Theories}\label{DiscreteSection}

Given an o-minimal theory $T$, a model $\m M\models T$, say a point $a\in\m M$ is \emph{non-dense} if $a$ has either an immediate successor or an immediate predecessor (which may be among $\pm\infty$).  If $T$ has only finitely many such points, they play no role in the countable model theory of $T$; we can canonically fit a copy of $(\b Q,<)$ between any non-dense point and its successor or predecessor.  Our theorem for this section is the following:

\begin{thm}\label{DiscreteBC}
	If $T$ is an o-minimal theory with infinitely many non-dense points, then $(\Mod(T),\iso)$ is Borel complete.
\end{thm}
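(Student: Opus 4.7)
The strategy is to produce a nonsimple type over $\emptyset$ using the successor (or predecessor) function on the non-dense part, and then apply Theorem~\ref{MainTheorem}.

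First, by o-minimality, the sets $D^+=\{a : a \text{ has an immediate successor}\}$ and $D^-=\{a : a \text{ has an immediate predecessor}\}$ are both $\emptyset$-definable, hence each is a finite union of $\emptyset$-definable points and intervals. Since $D^+\cup D^-$ is infinite by hypothesis, at least one of them, say $D^+$ (the other case being symmetric), contains an infinite $\emptyset$-definable interval $J$. The successor function $S$ is $\emptyset$-definable on $J$ and is strictly increasing: if $x_1<x_2$ are both in $J$, then $x_2$ is strictly greater than $x_1$, so there being nothing strictly between $x_1$ and its immediate successor forces $S(x_1)\leq x_2$, and the same reasoning gives $S(x_1)<S(x_2)$. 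The same convexity argument shows $S$ carries $J\setminus\{\max J\}$ into $J$.

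Next, I would construct a complete $\emptyset$-type $q$ realized in $J$ which $S$ preserves. Let $d=\sup J$ (possibly $+\infty$). The natural candidate is the type $q(x)$ asserting $x\in J$ together with $x>e$ for every $e\in\cl(\emptyset)$ with $e<d$. In the generic configuration, where $\cl(\emptyset)\cap J$ either has a maximum strictly below $d$ or has no maximum but is bounded inside $J$, $\aleph_1$-saturation yields a non-algebraic realization $a$, and since $S(a)>a$ exceeds each of these same definable bounds, $S(a)\models q$ as well. The one degenerate case is when $\cl(\emptyset)\cap J$ is cofinal in $J$; there one verifies that the non-algebraic elements of $J$ live at its left end rather than its right, and obtains the analogous fixed type from the predecessor function, using an infinite interval in $D^-$ (or the left tail of $J$ itself when predecessors are available there).

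In every case, $S$ (or symmetrically $S^{-1}$) is a $\emptyset$-definable function from $q$ to $q$ with $S(a)\neq a$, witnessing that $S(a)\in\cl(a)\setminus\{a\}$ realizes $q$, so that $q$ is $1$-nonsimple. Theorem~\ref{MainTheorem} then yields Borel completeness of $T$.

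The main obstacle will be cleanly handling the degenerate cofinal-closure configuration in the second step, which requires a symmetric appeal to the predecessor structure; the remaining pieces are direct consequences of o-minimality, saturation, and the nonsimplicity criterion developed in the preceding sections.
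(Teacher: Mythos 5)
Your strategy---find a nonsimple $\emptyset$-type inside the discrete part using the successor function and then apply the main theorem---is the right one, and your construction of the non-cut type $q$ at the right (or symmetrically left) end of the interval $J$ handles a large portion of the cases. But the case split is incomplete in a way your last paragraph already half-suspects: you have no construction when \emph{both} ends of the interval are ``tight'' in $\cl(\emptyset)$. Concretely, suppose $J=(c,d)$ with $c,d\in\cl(\emptyset)$ and both $S(c)$ and $S^{-1}(d)$ exist. Then $\max\bigl(\cl(\emptyset)\cap J\bigr)=S^{-1}(d)$, which is strictly below $d$ --- so this falls into your ``generic'' case --- yet your type $q$ asserts $S^{-1}(d)<x<d$, which is inconsistent because nothing lies strictly between an element and its immediate successor. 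Saturation does not rescue an inconsistent type, so the claimed ``non-algebraic realization $a$'' does not exist. Your fallback, moving to the left end of $J$ and using $S^{-1}$, fails for the same reason there: $\min\bigl(\cl(\emptyset)\cap J\bigr)=S(c)$, and nothing lies between $c$ and $S(c)$. The claim that ``the non-algebraic elements of $J$ live at its left end rather than its right'' is simply false in this configuration; they live in \emph{cuts} in the interior.

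What is missing is the third alternative the paper uses: when $S(c)$ and $S^{-1}(d)$ both exist, define the cut
\[
r(x)=\{x>S^n(c):n\in\omega\}\cup\{x<e:e\in\cl(\emptyset)\ \wedge\ e>S^n(c)\text{ for all }n\in\omega\}.
\]
This is a complete $\emptyset$-type (no $\emptyset$-definable point can sit in the gap), it is consistent by $\aleph_1$-saturation, and it is closed under $S$: if $x\models r$ and some definable $e>S^n(c)$ for all $n$ satisfied $e\leq S(x)$, then $e=S(x)$, forcing $x=S^{-1}(e)\in\cl(\emptyset)$, contradicting $x\models r$. This cut is the genuinely new construction your proposal needs; neither endpoint non-cut suffices when the interval is discretely closed off at both ends (e.g.\ the theory of $\b Q+\omega+\omega^*+\b Q$ with the first and last non-dense points definable). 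Once this third case is added, the three alternatives are exhaustive and your intended reduction to Theorem~\ref{MainTheorem} goes through.
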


\begin{proof}
	We construct a nonsimple type over the empty set.  Since there are infinitely many non-dense points, there is an infinite interval $I_0$ over $\emptyset$ which consists entirely of non-dense points.  Therefore, there is a subinterval $I$ of $I_0$ of points which all have immediate successors and predecessors.  Let $S(x)$ denote the immediate successor function, where it is defined. We will construct a complete type extending $I$ which is nonsimple under the function $x\mapsto S(x)$.
	
	Let $I=(a,b)$, noting $a,b\in\cl(\emptyset)$.  We have several cases:
	
	{\bf First:} If $a$ has no immediate successor, then define $p(x)$ by $$p(x)=\{a<x\}\cup\{x<c:c>a, c\in\cl(\emptyset)\}$$  By o-minimality, $p(x)$ is a complete type, and clearly extends $I$.  It may be either an atomic interval (if $\cl^I(\emptyset)=\emptyset$) or a non-cut $(a)^+$ (if not), but either way, it must be closed under $S$.  For if not, there is an $x$ realizing $p(x)$ such that $S(x)\geq c$ for some $c\in\cl^I(\emptyset)$.  But then $S(x)=c$, so $S^{-1}(c)$ is well-defined, in $\cl(\emptyset)$, and equal to $x$, so that $x$ does not model $p$ after all.
	
	Thus $p(x)$ is complete and nonsimple under the function $S$.
	
	{\bf Second:} If $b$ has no immediate predecessor, then define $q(x)$ by $$q(x)=\{x<b\}\cup\{x>c:c<b, c\in\cl(\emptyset)\}$$  By the same logic as above, $q(x)$ is complete and closed under the function $x\mapsto S^{-1}(x)$, so is nonsimple.
	
	{\bf Finally:} If $S(a)$ and $S^{-1}(b)$ both exist, then define $r(x)$ by $$r(x)=\{x>S^n(a):n\in\omega\}\cup\{x<c:c\in\cl(\emptyset)\land c>S^n(a)\textrm{ for all }n\in\omega\}$$ Then $r(x)$ is a complete type over $\emptyset$ as before, and is a cut.  But as before, if $x$ models $r$, then $S(x)$ is defined and must still realize $r$.  Thus $r$ is nonsimple.
\end{proof}

Therefore $T$ is Borel complete.

\bibliography{Citations}
\end{document}